\documentclass{article}
\usepackage[utf8]{inputenc}
\usepackage{amsmath,amsthm,amssymb,amsfonts}
\usepackage{hyperref,color}
\usepackage{tikz}
\def\a{\mathbf{a}}
\def\b{\mathbf{b}}

\def\m{\mathbf{m}}
\def\n{\mathbf{n}}

\def\Be{\mathcal{B}}
\def\Z{\mathbb{Z}}
\def\C{\mathbb{C}}
\def\N{\mathbb{N}}

\def\aalpha{\boldsymbol{\alpha}}
\def\bbeta{\boldsymbol{\beta}}

\newtheorem{theorem}{Theorem}[section]

\newtheorem{lemma}[theorem]{Lemma}
\newtheorem{prop}[theorem]{Proposition}
\newtheorem{corollary}[theorem]{Corollary}
\newtheorem{remark}[theorem]{Remark}
\newtheorem{example}[theorem]{Example}

\DeclareMathOperator*{\res}{res}
\newcommand{\floor}[1]{\left\lfloor #1 \right\rfloor}
\title{General duality relations for hypergeometric and basic hypergeometric series
\footnotetext{\textit{\textnormal{2020} AMS Mathematics Subject Classification: 33C20, 33D15}
\\ \textit{Key words and phrases:  hypergeometric function, basic hypergeometric function, duality relation, hypergeometric identity, $q$-hypergeometric identity, multi-term hypergeometric identity}
}}

\author{Dmitrii Karp \thanks{Holon Institute of Technology, Holon, Israel; Institute of Mathematics and Informatics, Bulgarian Academy of Sciences, Sofia, Bulgaria. Email: dimkrp@gmail.com}, Yi Zhang \thanks{Corresponding author. Department of Applied Mathematics, School of Mathematics and Physics, Xi'an Jiaotong-Liverpool University, 
 Suzhou, 215123, China.  Email: Yi.Zhang03@xjtlu.edu.cn. The work of Y.\ Zhang was supported by the NSFC grant No.\ 12371520. }}
\date{\today}

\begin{document}
\maketitle

\begin{abstract}
Duality relations for hypergeometric functions have reappeared as an active research topic several times, with the first instances tracing back to Euler and Gauss and the latest burst of activity occurring between 2015 and 2023.  In this paper we present a common generalization of all relations of this type found in the existing literature both for  hypergeometric and for $q$-hypergeometric functions.  We cover both Gauss type and confluent generalized hypergeometric functions and their  $q$-analogues. Our results entail a number of corollaries including multi-term relations for hypergeometric and $q$-hypergeometric series at a fixed argument. 
\end{abstract}

\section{Introduction and preliminaries}

Duality relations in the context of this paper are identities of the form $\sum_{j=1}^{p}c_{j}f_{j}(z)g_j(z)=\alpha(z)$, where $f_{j}$, $g_{j}$ are each expressed in terms of a single hypergeometric function ${}_{p}F_{q}$ or a single $q$-hypergeometric function ${}_{p}\phi_{q}$ dependent on interrelated parameters; $c_j$ are constants which also depend on parameters but not on the variable  $z$, and $\alpha(z)$ is an explicit elementary function, like power times a Laurent polynomial or its $q$-analogue.  The term ''duality relations'' originally refers to the situation when $f_{j}$, $g_{j}$ are solutions of adjoint homogeneous differential  (or difference) equations, but we retain it for a more general situation.

The history of duality relations for hypergeometric series can be traced back to Euler and Gauss with important contributions made by Bailey and Darling in the 1930s,  see details in \cite[Introduction]{KK}. More recent developments  include works by Nesterenko \cite[Theorem~5]{Nesterenko1995} and  Gorelov \cite[Corollary~1]{Gorelov2010}, motivated by number theory, by Feng, Kuznetsov and Yang \cite{FKY2}, motivated by computation of fractional Laplacian 
and by Beukers and Jouhet \cite{BJ}, derived by purely algebraic methods based
on the theory of $D$-modules of differential and difference equations. 

Another line of research driven by computation of the coefficients of contiguous relations for ${}_2F_{1}$ and ${}_2\phi_{1}$ functions is due to Ebisu \cite{Ebisu} and Yamaguchi \cite{Yamaguchi}.  Their relations are more general than specializations to ${}_2F_1$ (or ${}_2\phi_{1}$) of the relations due to Feng, Kuznetsov, Yang and those due to Beukers and Jouhet. In the paper \cite{KK} Alexey Kuznetsov and the first author found an extension of the relation from \cite{FKY2} which included Ebisu's ${}_2F_1$ identity. In the subsequent paper  \cite{KKK} Kalmykov, Kuznetsov and the first author found a $q$-analogue that included as a particular case Yamaguchi's formula. The identities from \cite{KK,KKK} did not include, however, the Beukers-Jouhet duality relations from \cite{BJ} neither for the ordinary nor for the basic hypergeometric functions.  The goal of this paper is to present identities that include all previously mentioned formulas as particular cases.

To set the stage, let us  introduce some notations.  We will use the standard Pochhammer symbol $(a)_{n}=\Gamma(a+n)/\Gamma(a)$ and the abbreviation $(\a)_n$ to denote the product  $(\a)_n=(a_1)_n(a_2)_n\cdots(a_p)_n$, where $n$ is an integer and $\a=(a_1,\ldots,a_p)$ is a $p$-tuple of real or complex numbers.  If $\n=(n_1,\ldots,n_p)$ has the same size as $\a$, then we set
$(\a)_{\n}=(a_1)_{n_1}(a_2)_{n_2}\cdots(a_p)_{n_p}$.
The symbol $\a_{[k]}$ will denote the $p$-tuple $\a$ with $k$-th component omitted.    The standard notation ${}_{p}F_q\left(\a;\b;z\right)$ will be used for the generalized hypergeometric series,
\begin{equation*}\label{eq:pFqdefined}
{}_{p}F_q\left(\left.\!\!\begin{array}{c}\a \\ \b\end{array}\right|z\!\right)={}_{p}F_q\left(\a;\b;z\right)
=\sum\limits_{n=0}^{\infty}\frac{(a_1)_n(a_2)_n\cdots(a_{p})_n}{(b_1)_n(b_2)_n\cdots(b_q)_nn!}z^n=\sum\limits_{n=0}^{\infty}\frac{(\a)_n}{(\b)_n n!}z^n.
\end{equation*}
The expression $\a+\alpha$  will be understood as the component-wise additon, where $\alpha$ is a scalar.

Let us now recall the Beukers-Jouhet formula from \cite{BJ}. Define $\vartheta=z\dfrac{d}{dz}$ to be Euler's operator. Set $b_r=1$ and $k,l=0,1,\ldots,r-1$.  Then the identity  \cite[Theorem~1.1]{BJ} takes the form
\begin{equation}\label{eq:BeukersJouhet}
\sum_{i=1}^r \frac{1}{(\b_{[i]}-b_i)_{1}}\vartheta^{k}z^{1-b_i}{_{r}F_{r-1}}\!\left(\left.\!\!\begin{array}{c}1+\a-b_i\\1+\b_{[i]}-b_i\end{array}\right|z\!\right)\vartheta^{l}z^{b_i-1}{_{r}F_{r-1}}\!\left(\left.\!\!\begin{array}{c}b_i-\a\\1-\b_{[i]}+b_i\end{array}\right|z\!\right) := M_{k l} \in H(z),
\end{equation}
where $b_i$'s are assumed to be distinct modulo integers, and $H$ is the field generated over $\mathbb{Q}$  by $a_i$, $b_j$.   Moreover, $M_{kl}=0$  if $k+l \leqslant r-2$ and  $M_{kl}=(-1)^k /(1-z)$  if  $k+l=r-1$. The functions
$$
z^{1-b_i}{_{r}F_{r-1}}\!\left(\left.\!\!\begin{array}{c}1+\a-b_i\\1+\b_{[i]}-b_i\end{array}\right|z\!\right)~~\text{and}~~z^{b_i-1}{_{r}F_{r-1}}\!\left(\left.\!\!\begin{array}{c}b_i-\a\\1-\b_{[i]}+b_i\end{array}\right|z\!\right)
$$
appearing in the above identity and related by $a_i\to1-a_i$, $b_i\to2-b_i$ form the bases of solutions around $z=0$ of the standard and dual hypergeometric differential equations, see \cite[(3.1)]{BJ}.  This explains the term ``duality relations''. 

A natural generalization of \eqref{eq:BeukersJouhet} can be obtained by replacing the operator $\vartheta$ by the operator  $\vartheta(\alpha)=z^{1-\alpha}\dfrac{d}{dz}z^{\alpha}$ dependent on a real parameter $\alpha$, so that $\vartheta=\vartheta(0)$, and  further replacing $\vartheta^k$ by 
\begin{equation}\label{eq:varthetaalpha}
\vartheta(\aalpha)=\vartheta(\alpha_{k},\ldots,\alpha_1):=\vartheta(\alpha_{k})\cdots\vartheta(\alpha_{1}),
\end{equation}
and, similarly, for $\vartheta(\bbeta)=\vartheta(\beta_{l},\ldots,\beta_1)$. Then the left hand side of \eqref{eq:BeukersJouhet} is replaced by 
\begin{equation}\label{eq:BeukersJouhetGen}
\sum\limits_{i=1}^{r}\frac{1}{(\b_{[i]}-b_i)_{1}}
\vartheta(\aalpha)z^{1-b_i}{_{r}F_{r-1}}\!\left(\left.\!\!\begin{array}{c}1+\a-b_i\\1+\b_{[i]}-b_i\end{array}\right|z\!\right)
\vartheta(\bbeta)z^{b_i-1}
{_{r}F_{r-1}}\!\left(\left.\!\!\begin{array}{c}b_i-\a,\\1-\b_{[i]}+b_i\end{array}\right|z\!\right).
\end{equation}
In Corollary~\ref{cr:beukersJouhetGen} below we will present 
the closed-form of the above formula in terms of rational functions in $z$.  We will also remove the restriction $k,l\le r-1$ imposed in \cite{BJ}.  
This result is a corollary of a more general new identity which also extends the relation from \cite[Theorem~1]{KK}, namely
\begin{equation}\label{eq:KarpKuznetsov}
\sum\limits_{i=1}^{r}\frac{(1-\b+a_i)_{\m-n_i}z^{-n_i}}{(a_i-\a_{[i]})_{\n_{[i]}-n_i+1}}
{}_{r}F_{r-1}\bigg(\begin{matrix}\b-a_i\\1+\a_{[i]}-a_i\end{matrix}\Big\vert z\bigg)
{}_{r}F_{r-1}\bigg(\begin{matrix}1-\b+a_i+\m-n_i\\1-\a_{[i]}+a_i+\n_{[i]}-n_i\end{matrix}\Big\vert z\bigg)
=R(z),
\end{equation}
where $\m, \n\in\Z^r$ and $R(z)$ is a rational function of $z$ given explicitly.  The common extension of \eqref{eq:BeukersJouhetGen} and \eqref{eq:KarpKuznetsov} will be given in Theorem~\ref{THM:generalizedBeukersJouhet} in the subsequent section.

To discuss the $q$-case, let us  recall some standard notations. Let $q$ be a fixed complex parameter (the `base') with $0<|q|<1$.  
The $q$-shifted factorial is defined for any complex parameter $a$ and a nonnegative integer $n$ by
\begin{equation} \label{EQ:qPochhammerdef}
(a; q)_0 = 1, \quad (a; q)_n = \prod_{k = 0}^{n - 1} (1 - a q^k), \quad (a; q)_{-n} = \prod_{k = 1}^n \frac{1}{1 - a/q^k}=\frac{1}{(aq^{-n};q)_{n}}=\frac{(-1)^{n}q^{n(n+1)/2}}{a^n(a^{-1}q;q)_{n}}, \quad n \in \N.
\end{equation}
The condition $|q|<1$ also guarantees that for any  complex $a$ the limit
\[
(a; q)_\infty = \lim_{n \rightarrow \infty} (a; q)_{n}
\]
exists as a finite number. Let $\a = (a_1, \ldots, a_r) \in \C^r$.  We will use the abbreviations 
$$
q^{\a} = (q^{a_1}, \cdots, q^{a_r}),\quad (\a;q)_{n} = (a_1; q)_{n} (a_2; q)_{n} \cdots (a_r; q)_{n},\quad 
(\a;q)_{\n} = (a_1; q)_{n_1} (a_2; q)_{n_2} \cdots (a_r; q)_{n_r},
$$
where  $\n = (n_1, \ldots, n_r) \in \Z^r$.
Furthermore, for an  integer $r\ge2$ and complex parameters $a_1, \ldots, a_r, b_1, \ldots, b_{r-1}$,  the  basic hypergeometric series is defined by
$$
{}_r\phi_{r-1}\left(\!\!\begin{array}{l}
\a \\
\b
\end{array} ; q, z\right):=\sum_{n=0}^{\infty} \frac{\left(a_1; q\right)_n \cdots\left(a_r; q\right)_nz^{n}}{\left(b_1; q\right)_n \cdots\left(b_{r-1}; q\right)_n\left(q; q\right)_n}=\sum_{n=0}^{\infty} \frac{(\a;q)_{n}z^{n}}{(\b;q)_n(q;q)_{n}}.
$$
Define $b_r=1$, $A=\sum_{i=1}^{r}a_i$ and $B=\sum_{i=1}^{r}b_i$ and suppose $k,l=0,1,\ldots,r-1$.  Slightly changing the notation \cite[Theorem~1.2]{BJ} can be stated as follows:
\begin{equation}\label{eq:q-BeukersJouhet}
\sum_{i=1}^{r} \frac{q^{1-b_i}}{(q^{\b_{[i]}-b_i})_{1}}
{}_r\phi_{r-1}\left(\!\!\begin{array}{c}
q^{1+\a-b_i} \\
q^{1+\b_{[i]}-b_i}
\end{array} ; q, q^{k}z\right){}_r\phi_{r-1}\left(\!\!\begin{array}{c}
q^{b_i-\a}\\
q^{1+b_i-\b_{[i]}}
\end{array}; q; q^{A-B+r-1+l}z\right)=M_{kl}(q) \in H_q(z),    
\end{equation}
where $H_q$ is the field generated by $q$, $q^{a_i}$, $q^{b_j}$ over $\mathbb{Q}$.  
Moreover, we have $M_{kl}(q)=0$ if $l \leqslant k \leqslant r-1$, except when $(k,l)=(r-1,0)$. In the latter case,
$$
M_{r-1,0}(q)=\frac{(-1)^{r+1}q^r}{q^{B}-q^{A+r-1}z}.
$$
In addition, we have
$$
M_{k,k+1}(q)=\frac{1}{1-q^kz}, \quad k=0,1,\ldots,r-2.
$$
In \cite[Theorem~1]{KKK} Kalmykov, Kuznetsov and the first author established a related identity of the form  
\begin{equation}\label{eq:q-KKK}
\sum_{i=1}^{r} q^{a_i(1-t)}\frac{(q^{1-\b+a_i};q)_{\m-n_i}z^{-n_i}}{(q^{a_i-\a_{[i]}};q)_{\n_{[i]}-n_i+1}}
{_r\phi_{r-1}}\left(\!\begin{array}{l}q^{\b-a_i}\\q^{1+\a_{[i]}-a_i}\end{array}\!\vline\, Wz\right)
{_r\phi_{r-1}}\left(\!\begin{array}{l}q^{1-\b+a_i+\m-n_i}\\q^{1-\a_{[i]}+a_i+\n_{[i]}-n_i}\end{array}\!\vline\,z\right)
=R_q(z),
\end{equation}
where $\a\in\mathbb{C}^r$ satisfies $a_i-a_j\notin\Z$ for $1\le i < j\le r$ and the vector $\b\in\mathbb{C}^{r}$ is arbitrary; $W=q^{t+r-1}\prod_{i=1}^r q^{a_i-b_i}$.
The function $R_q(z)$ is given explicitly in \cite[(8)]{KKK}.

In  Section~3 below we will present a duality relation that contains both \eqref{eq:q-BeukersJouhet} and \eqref{eq:q-KKK} as particular cases. It will be given in Theorem~\ref{THM: qBeukersJouhet2}.  
For both differential and for $q$-cases we will also provide confluent versions of our identities ({\it i.e.}, relations for the functions ${}_rF_{s}$  and  ${}_r\phi_{s}$ with $r\le{s}$) and a number of explicit examples of the right hand sides for small values of the components of $\mathbf{n}$, $\mathbf{m}$.  Furthermore, our results entail several  corollaries providing explicit expressions for certain finite sums of terminating and non-terminating hypergeometric and basic hypergeometric series at a fixed argument.   Finally, let us remark that all formulas presented in this paper have been verified numerically for particular values of parameters and should not contain typos or inaccuracies.

\section{The differential case}

Using the straightforward differentiation formula 
\begin{equation}\label{eq:xbetaDxalpha}
z^{\beta+1}\frac{d}{dz}\Bigg[z^{\alpha}{_{p}F_{q}}\left(\left.\!\!\begin{array}{c}\a\\\b\end{array}\right|z\!\right)\Bigg]=\alpha z^{\alpha+\beta}{_{p+1}F_{q+1}}\left(\left.\!\!\begin{array}{c}\a,\alpha+1\\\b,\alpha\end{array}\right|z\!\right)
\end{equation}
we can rewrite generalized Beukers-Jouhet  expression \eqref{eq:BeukersJouhetGen} as follows
\begin{multline*}\label{eq:SGeneral}
\sum\limits_{i=1}^{r}\frac{(1-b_i+\boldsymbol{\alpha})_{1}(b_i-1+\boldsymbol{\beta})_{1}}{(\b_{[i]}-b_i)_{1}}{_{r+k}F_{r+k-1}}\!\left(\left.\!\!\begin{array}{c}1+\a-b_i,2-b_i+\alpha_1,\ldots,2-b_i+\alpha_k\\1+\b_{[i]}-b_i,1-b_i+\alpha_1,\ldots,1-b_i+\alpha_k\end{array}\right|z\!\right)
\\
\times 
{_{r+l}F_{r+l-1}}\!\left(\left.\!\!\begin{array}{c}b_i-\a,b_i+\beta_1,\ldots,b_i+\beta_{l}\\1-\b_{[i]}+b_i,b_i+\beta_1-1,\ldots,b_i+\beta_{l}-1\end{array}\right|z\!\right).
\end{multline*}
Note that while the written formula \eqref{eq:xbetaDxalpha}
requires that $\alpha\ne0,-1,-2,\ldots$
it is in fact true for any $\alpha$ if we agree that  $\alpha(\alpha+1)_k/(\alpha)_k=(\alpha+k)$ also for $\alpha=-m$, $m=0,1,\ldots$.  The above form of the generalized Beukers-Jouhet  expression  provides a clue for simultaneous generalization of  \eqref{eq:BeukersJouhetGen} and \eqref{eq:KarpKuznetsov}. The result is presented in the following theorem. 
\begin{theorem}\label{THM:generalizedBeukersJouhet}
Suppose that $r\geq2$, $u,v\ge0$ are  integers, $\a,\b\in \C^r$, $\boldsymbol{\alpha}\in\C^{u}$, $\boldsymbol{\beta}\in\C^v$, and $\m,\n \in \Z^r$. Set 
\begin{equation} \label{EQ:mn}
M = \sum_{i=1}^r m_i, \quad N = \sum_{i = 1}^r n_i, \quad m_{\min} = \min_{1 \leq i \leq r} (m_i), \quad n_{\max} = \max_{1 \leq i \leq r} (n_i),
\end{equation}
\begin{equation} \label{EQ:indexp}
 p = \max \{-1, M-N+u+v-r+1\}.
\end{equation}
Assuming  that the components of $\b$ are distinct modulo integers, the following identity holds: 
\begin{multline} \label{EQ:generalizedBeukersJouhet} 
\sum_{i = 1}^r \frac{(1 + \mathbf{a} - b_i)_{\mathbf{m} - n_i} (1 - b_i + \boldsymbol{\alpha}-n_i)_1 (b_i - 1 + \boldsymbol{\beta})_1}{(\mathbf{b}_{[i]} - b_i)_{\mathbf{n}_{[i]} - n_i + 1}z^{n_i}}
\times {_{r + v}F_{r + v - 1}}\!\left(\left.\!\!\begin{array}{c} b_i - \mathbf{a}, b_i + \boldsymbol{\beta} \\ 1 - \mathbf{b}_{[i]}  +  b_i, b_i - 1 + \boldsymbol{\beta} \end{array} \right| z\!\right)
\\
\times {_{r+u}F_{r +u-1}}\!\left(\left.\!\!\!\begin{array}{c} 1 + \mathbf{a} - b_i + \mathbf{m} - n_i, 2 - b_i + \boldsymbol{\alpha}-n_i \\ 1 + \mathbf{b}_{[i]} - b_i + \mathbf{n}_{[i]} - n_i, 1 - b_i + \boldsymbol{\alpha}-n_i\end{array} \right| z\!\right) 
= (1 - z)^{- p - 1} \sum_{j = - n_{\max}}^{p - m_{\min}} \lambda_j z^j, 
\end{multline}
where the coefficient $\lambda_j$ is given by 
\begin{multline}\label{eq:beta-final}
\lambda_{j}=
\sum\limits_{\ell=\max(-n_{\max},j-p-1)}^{j}\binom{p+1}{j-\ell}(-1)^{j-\ell+u}\sum\limits_{i=1}^{r}
(b_i+\boldsymbol{\beta}-1)_{1}(b_i-\boldsymbol{\alpha}-\ell-1)_{1}
\\
\times\frac{(1+\a-b_i)_{\m+\ell}}{(\b_{[i]}-b_i)_{\n_{[i]}+\ell+1}(\ell+n_i)!}
F\!\left(\!\left.\!\begin{array}{l}-\ell-n_i,b_i-\a,1-\b_{[i]}+b_{i}-\n_{[i]}-\ell-1,b_i+\boldsymbol{\beta},b_i-\boldsymbol{\alpha}-\ell\\b_{i}-\a-\m-\ell,1-\b_{[i]}+b_{i},b_i+\boldsymbol{\beta}-1,b_i-\boldsymbol{\alpha}-\ell-1\end{array}\!\right.\right),
\end{multline}
with each term with $\ell+n_i<0$ vanishing by convention, $j = - n_{\max}, \ldots, p - m_{\min}$.
\end{theorem}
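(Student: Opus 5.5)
We argue by the standard ``duality'' mechanism: show that the left-hand side of \eqref{EQ:generalizedBeukersJouhet}, call it $S(z)$, times $(1-z)^{p+1}$ is a Laurent polynomial in $z$ supported on $[-n_{\max},p-m_{\min}]$, and then read off its coefficients.

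First expand both series. The product of the two series in the $i$-th term is a double sum over $k,s\ge0$. Multiplied by the prefactor and $z^{-n_i}$, it becomes a Laurent series $\sum_{\ell\ge -n_{\max}} c_\ell z^{\ell}$. Collecting the coefficient of $z^{\ell}$ requires $s=\ell+n_i-k$ with $0\le k\le \ell+n_i$. This gives a terminating sum in $k$, which we write as the terminating hypergeometric sum $F(\cdots)$ at unit argument appearing in \eqref{eq:beta-final}. The reversal $k\to \ell+n_i-k$ turns Pochhammers with $\m-n_i$, $\n_{[i]}-n_i$ shifts into the displayed parameters with $-\ell-n_i$, $\m+\ell$, $\n_{[i]}+\ell+1$. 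This is routine bookkeeping using $(x)_{N-k}=(-1)^k(x)_N/(1-x-N)_k$. Terms with $\ell+n_i<0$ vanish because the $i$-th summand starts at $z^{-n_i}$. The coefficient of $z^j$ in $(1-z)^{p+1}S(z)$ is then $\sum_\ell\binom{p+1}{j-\ell}(-1)^{j-\ell}c_\ell$, which is exactly \eqref{eq:beta-final}. (For $p=-1$ the binomial factor collapses to $\ell=j$.) So the formula for $\lambda_j$ is automatic once the support claim is proved.

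The main step is to show that $(1-z)^{p+1}S(z)$ has no terms $z^j$ with $j>p-m_{\min}$. I would follow the method of \cite{KK}. For fixed $\ell$, regard $c_\ell$ as a sum over $i$ of residues of a rational function of an auxiliary variable $t$. Roughly, set $t$ equal to the ``$b$'' parameter. Consider
\[
G_\ell(t)=\frac{(1+\a-t)_{\m+\ell}\,(t-1+\bbeta)_1(t-\aalpha-\ell-1)_1\,(\cdots)}{\prod_{i}(\b_{i}-t)_{\n_{i}+\ell+1}}
\]
suitably combined with the inner $k$-sum, whose poles in $t$ sit at $b_i+\text{integers}$. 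The distinctness of the $b_i$ modulo integers makes the poles attached to different $i$ disjoint. The sum over $i$ of residues, i.e.\ $c_\ell$, then equals minus the residue at infinity. The latter is a polynomial in $\ell$ of degree controlled by $M-N+u+v-r+1=p$ (when $p\ge0$), because numerator degree minus denominator degree in $t$ equals $M-N+u+v-r+\ell\cdot 0$ plus constants. More precisely, for large $\ell$ the sequence $c_\ell$ coincides with a polynomial in $\ell$ of degree $\le p$. It vanishes identically when $p=-1$, i.e.\ when the degree is at most $-2$. A sequence agreeing with a polynomial of degree $\le p$ for $\ell> p-m_{\min}-p-1$ is annihilated by the $(p+1)$-st difference, which is exactly multiplication by $(1-z)^{p+1}$. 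This yields the upper bound on the support.

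The main obstacle is producing the correct rational function $G_\ell$, so that the inner terminating sum and the $\aalpha,\bbeta$ factors are all captured as residues. Note that $(1-b_i+\aalpha-n_i)_1$ combines with the shift structure only after the $k$-reversal. I would first do the case $u=v=0$, recovering \cite[Theorem~1]{KK}. I would then note that each extra pair $b_i+\beta$ over $b_i+\beta-1$ contributes a factor $(t+\beta+s)/(t+\beta-1)$ per term, and similarly for $\aalpha$; this raises the $t$-degree by one each time, matching $u+v$ in $p$. I would also check the threshold $j\le p-m_{\min}$: the vanishing must start at $\ell>p-m_{\min}$, where the factors $(1+\a-t)_{\m+\ell}$ no longer produce cancellations from negative $m_i$. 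This index check is where I expect the delicate work.
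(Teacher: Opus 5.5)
Your plan follows the paper's proof step for step: Cauchy product, then the coefficient of $z^\ell$ as a sum of residues of one rational function, then polynomiality of degree $\le p$ in $\ell$ of the residue at infinity for $\ell\ge -m_{\min}$, then the $(p+1)$-st difference. Your derivation of \eqref{eq:beta-final} from the support statement is exactly Proposition~\ref{prop:beta-final}.

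What you call the main obstacle is already solved by your own $G_\ell$ once you delete the factor $(\cdots)$:
\[
G_\ell(t)=\frac{(1+\a-t)_{\m+\ell}\,(t-1+\bbeta)_1\,(t-\aalpha-\ell-1)_1}{(\b-t)_{\n+\ell+1}}
\]
is, up to $(-1)^u$, the paper's $f_\ell$ from \eqref{EQ:residualfunction} after the substitution $z=\ell-t$. Nothing needs to be combined with the inner sum, because every single term of that sum is itself a residue. At the simple pole $t=b_i+j$, $0\le j\le n_i+\ell$, one gets $\res_{t=b_i+j}G_\ell=(-1)^{u+1}\gamma^{\ell+n_i}_{i,j}$. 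This is the $j$-th term of your inner sum before the reversal (the paper's \eqref{eq:gammafinal}). The factors $(b_i+\bbeta+j-1)_1$ and (up to sign) $(1-b_i+\aalpha+\ell-j)_1$ are just the values of the two extra linear factors at $t=b_i+j$.

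The threshold is also not about cancellations. For $\ell\ge -m_{\min}$ every $(1+a_i-t)_{m_i+\ell}$ is a polynomial, so the points $b_i+j$ are the only finite poles. For $\ell<-m_{\min}$, negative-index Pochhammers create extra poles at points $1+a_i+\tau$. Those finitely many coefficients are simply kept as a Laurent polynomial, which after multiplication by $(1-z)^{p+1}$ stays inside $[-n_{\max},p-m_{\min}]$.

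\textbf{The gap.} You claim that the coefficient of $t^{-1}$ in the expansion of $G_\ell$ at infinity (equal to the sum of the finite residues) is a polynomial in $\ell$ of degree at most $p$. Your justification is that the $t$-degree of $G_\ell$ is $M-N+u+v-r$ independently of $\ell$. That only shows the coefficient vanishes when $p=-1$. It says nothing about how the coefficient depends on $\ell$, since both the number and the positions of the zeros and poles of $G_\ell$ move with $\ell$. This is exactly the content of Lemma~\ref{LEM:generalizedBeukersJouhet}, and it needs a proof.

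One way to supply it, for $p\ge0$:
\begin{itemize}
\item Write $G_\ell(t)=(-1)^{M-N-r}t^{p-1}\exp\bigl(\sum_{s\ge1}Q_s(\ell)t^{-s}\bigr)$ with $Q_s(\ell)=\frac1s\bigl(\sum\text{poles}^s-\sum\text{zeros}^s\bigr)$.
\item A Pochhammer block contributes $\sum_{\sigma=0}^{L-1}(c+\sigma)^s=\bigl(\Be_{s+1}(c+L)-\Be_{s+1}(c)\bigr)/(s+1)$ with $L=m_i+\ell$ or $L=n_i+\ell+1$. This also correctly handles negative $L$, where poles turn into zeros.
\item Each such block is a polynomial in $\ell$ of degree $s+1$ with leading coefficient $1/(s+1)$. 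There are $r$ blocks upstairs and $r$ downstairs, so the $\ell^{s+1}$ terms cancel. The zeros $\alpha_\tau+\ell+1$ contribute only degree $s$.
\item Hence $\deg Q_s\le s$, so the coefficient of $t^{-s}$ in the exponential has degree $\le s$ in $\ell$. The coefficient of $t^{-1}$ in $G_\ell$ is $(-1)^{M-N-r}$ times the one with $s=p$.
\end{itemize}
The paper does the same computation via Hermite's expansion of $\log\Gamma$, where the cancellation appears as cancellation of the leading coefficients of the Bernoulli polynomials. With this lemma supplied, your argument is complete.
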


In order to prove the above theorem, we will need the following
\begin{lemma} \label{LEM:generalizedBeukersJouhet}
For each $k \in \Z$ define the rational function 
\begin{equation} \label{EQ:residualfunction}
f_k(z): = \frac{(z + \mathbf{a} - k + 1)_{\mathbf{m} + k} (z + \boldsymbol{\alpha} + 1)_1 (- z + \boldsymbol{\beta} + k - 1)_1}{(z + \mathbf{b} - k)_{\mathbf{n} + k + 1}}.
\end{equation}  
Then for $z\to\infty$ the function $f_k(z)$ has the asymptotic expansion
\begin{equation} \label{EQ:residualfunction2}
f_k(z) \sim \sum_{j=-\infty}^{M + u + v - N - r} C_j(k) z^j, 
\end{equation}
where $C_{-1}(k)$ is a polynomial in $k$ of degree $p$, which is defined by~\eqref{EQ:indexp} under the convention that a polynomial of negative degree vanishes.
\end{lemma}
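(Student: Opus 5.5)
The plan is to view $f_k$ as a rational function of $z$ whose shape changes with $k$, and to rewrite it so that $k$ enters only through shifts inside gamma ratios and linear factors. Then the coefficients of the expansion at $z=\infty$ are visibly polynomial in $k$.

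\emph{Step 1 (degree count).} In $f_k$ the numerator has degree $\sum_i(m_i+k)+u+v=M+rk+u+v$ in $z$. The denominator has degree $\sum_i(n_i+k+1)=N+rk+r$. So $f_k$ is rational of degree $D:=M+u+v-N-r$, independent of $k$, and has a Laurent expansion at infinity of the form \eqref{EQ:residualfunction2}. If $D\le -2$, i.e. $M-N+u+v-r+1\le -1$, then $C_{-1}(k)=0$ for all $k$, which is the case $p=-1$. Assume from now on that $p=D+1\ge 0$.

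\emph{Step 2 (removing the $k$-dependent lengths).} Write each Pochhammer factor through gamma functions:
\[
\frac{(z+a_i-k+1)_{m_i+k}}{(z+b_i-k)_{n_i+k+1}}
=\frac{\Gamma(z+a_i+m_i+1)}{\Gamma(z+b_i+n_i+1)}\cdot\frac{\Gamma(z+b_i-k)}{\Gamma(z+a_i-k+1)} .
\]
The first ratio does not involve $k$. For the second, use the classical asymptotic expansion
\[
\frac{\Gamma(z+s)}{\Gamma(z+t)}\sim z^{s-t}\sum_{n\ge0}\binom{s-t}{n}B_n^{(s-t+1)}(s)\,z^{-n},
\]
whose $n$-th coefficient is a polynomial of degree $\le n$ in $s$ once $s-t$ is fixed. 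Here $s-t=b_i-a_i-1$ is independent of $k$ and $s=b_i-k$, so the coefficient of $z^{-n}$ is a polynomial in $k$ of degree $\le n$. The remaining factors are handled the same way: $(z+\alpha_j+1)$ is free of $k$, and $(-z+\beta_j+k-1)=-z\bigl(1-(\beta_j+k-1)/z\bigr)$ contributes $k$-degree exactly equal to its $1/z$-order.

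The non-integer powers $z^{b_i-a_i-1}$ and $z^{a_i-b_i+m_i-n_i}$ combine into the integer power $z^{m_i-n_i-1}$. Hence
\[
f_k(z)\sim z^{D}\sum_{n\ge0} c_n(k)\,z^{-n},\qquad \deg_k c_n\le n .
\]
Since $C_{-1}(k)=c_{D+1}(k)=c_p(k)$, this gives $\deg C_{-1}\le p$. Alternatively, one can argue without gamma functions: $C_{-1}(k)$ equals the sum of the finite residues of $f_k$. But polynomiality is cleaner to read off from the expansion.

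\emph{Step 3 (exact degree; the main obstacle).} The delicate point is showing that the $k^p$ coefficient of $c_p$ is nonzero. I would extract it by the scaling $z\mapsto z$, $k\to\infty$ with $k/z$ fixed, keeping only top-order homogeneous parts. In the second gamma ratio only the leading behaviour $(z-k)^{b_i-a_i-1}$ survives, and each linear factor in $\beta$ becomes $-(z-k)$. The leading homogeneous part of $f_k$ is therefore
\[
(-1)^v z^{M-N+u+v-r-\sum_i(b_i-a_i-1)-v}\,(z-k)^{\sum_i(b_i-a_i-1)+v}.
\]
Expanding in $k/z$, the $k^p$ coefficient is $(-1)^{v+p}\binom{\sum_i(b_i-a_i)-r+v}{p}$. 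As a polynomial in the parameters this is not identically zero, so the degree is exactly $p$ for generic $\a,\b$ (and at most $p$ always, which is all that is used later). The care needed is in checking that lower-order terms of the gamma expansions cannot contribute to $k^p$, and this follows from the bound $\deg_k\le n$ of Step 2.
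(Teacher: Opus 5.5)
Your proof is correct. It rests on the same observation as the paper's proof: once the Pochhammer symbols are written as gamma ratios, $k$ appears only as a common shift in $\Gamma(z+b_i-k)/\Gamma(z+a_i-k+1)$, and the argument difference $b_i-a_i-1$ there does not depend on $k$.

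The two arguments use this observation differently. The paper takes logarithms and applies Hermite's expansion of $\log\Gamma(z+a)$ in Bernoulli polynomials. The $k^{j+1}$ terms of $\Be_{j+1}(b_i-k)-\Be_{j+1}(a_i-k+1)$ cancel, so each coefficient $Q_j(k)$ has degree $j$. The exponential-of-a-power-series formula then gives $\deg q_s\le s$. You instead use the Tricomi--Erd\'elyi expansion of the gamma ratio itself, with coefficients $\binom{s-t}{n}B_n^{(s-t+1)}(s)$. These visibly have degree $\le n$ in $s$ when $s-t$ is fixed, and the graded bound $\deg_k c_n\le n$ survives multiplication. This skips the log/exp step, at the price of quoting the N{\o}rlund-polynomial form of that expansion.

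Your Step~3 goes beyond the paper. The paper's proof, like your Step~2, only shows that $C_{-1}$ has degree at most $p$, and that is all the proof of Theorem~\ref{THM:generalizedBeukersJouhet} uses. Your top-degree computation gives $[k^p]\,C_{-1}(k)=(-1)^{v+p}\binom{\sum_i(b_i-a_i)-r+v}{p}$. Hence the degree is exactly $p$ if and only if $\sum_i(b_i-a_i)-r+v\notin\{0,1,\dots,p-1\}$. For example, $r=2$, $\m=\n=\mathbf{0}$, $u=2$, $v=0$ and $b_1+b_2-a_1-a_2=2$ give $p=1$, yet $C_{-1}$ is constant; you can check this directly since $C_{-1}(1)-C_{-1}(0)=a_1+a_2+2-b_1-b_2=0$. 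So the statement's ``degree $p$'' holds only generically. Your formulation (exactly $p$ for generic parameters, at most $p$ always) is the accurate one.
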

\begin{proof}
It is straightforward to see from~\eqref{EQ:residualfunction} that $C_{-1}(k) = 0$ if $p < 0$ and $C_{-1}(k) = (-1)^v$ if $p = 0$. Assume that $p > 0$. In view of $(z)_k = \Gamma(z + k)/\Gamma(z)$, 
we have
\begin{multline} \label{EQ:logGamma}
\log[f_k(z)] = \sum_{i = 1}^r \{ \log \Gamma(z + a_i + m_i + 1) - \log \Gamma(z + a_i - k + 1) - \log \Gamma(z + b_i + n_i + 1) + \log \Gamma(z + b_i - k) \} \\
+ \sum_{\ell = 1}^u \log(z + \alpha_\ell + 1) + \sum_{\ell = 1}^v \log(- z + \beta_\ell + k - 1)
\end{multline}
Next,  recall  Hermite's asymptotic expansion for $\log\Gamma(z+a)$~\cite[(1.8)]{Nemes}
$$
\log\Gamma(z+a)\sim(z+a-1/2)\log{z}-z+\frac{1}{2}\log(2\pi)+\sum\limits_{j=2}^{\infty}\frac{(-1)^j\Be_{j}(a)}{j(j-1)z^{j-1}},
$$
as $|z|\to\infty$ in the domain $|\arg z|<\pi-\delta$,  $0<\delta<\pi$, where $\Be_j(x)$ is the $j$-th Bernoulli polynomial \cite[24.2.3]{NIST} generated by 
$$
\frac{t e^{x t}}{e^t-1}=\sum_{j=0}^{\infty} \Be_j(x) \frac{t^j}{j!}~~\text{and given by}~~\Be_j(x)=\sum_{l=0}^{j}\binom{j}{l}\Be_{j-l}x^l
$$
in terms of Bernoulli numbers $\Be_{j}$.  The equality $\Be_{0}=1$ implies that the leading coefficient of $\Be_j(x)$ is $1$. Substituting Hermite's expansion for each term in $\log[f_k(z)]$ into \eqref{EQ:logGamma} and using Taylor's expansion of the logarithm, we get 
\begin{equation} \label{EQ:logresidualfunction}
\log[f_k(z)] \sim (M+ u - N - r) \log(z) +  v \log(-z) + \sum\limits_{j=1}^{\infty}\frac{Q_j(k)}{z^j},
\end{equation}
where
\begin{multline*}
Q_{j}(k)=\frac{(-1)^{j+1}}{(j)_2}\sum\limits_{i=1}^{r}\Big[\Be_{j+1}(b_i - k) - \Be_{j+1}(a_i - k + 1) + \Be_{j+1}(a_i + m_i + 1) - \Be_{j+1}(b_i + n_i + 1)\Big]
\\ + (-1)^{j+1}\sum_{t = 1}^{u}(\alpha_t+1)^j-\sum_{t=1}^{v} (\beta_t +k-1)^j.
\end{multline*}
We see that $Q_{j}(k)$ is a polynomial in $k$ of degree $j$  (note that the leading coefficient of $\Be_{j+1}(x)$ is $1$, so that the degree $k+1$ terms   cancel out). 
Exponentiating  both sides of~\eqref{EQ:logresidualfunction}, we find (remember that $p>0$ by assumption)
\begin{equation} \label{EQ:residualfunction3}
f_k(z)\sim (-1)^v z^{p - 1}e^{\sum_{j\ge1 }Q_j(k)z^{-j}}.
\end{equation}
According to the well-known formula for an exponential of a power series  (see \cite[Lemma~1]{KPJMS2018} and references therein), we have 
\begin{equation} \label{EQ:expfps}
e^{\sum_{j \ge 1}Q_j(k)z^{-j}}=1+\sum_{s=1}^{\infty}q_s(k)z^{-s},
\end{equation}
where
$$
q_s(k)=\sum\limits_{\ell=1}^{s}\frac{1}{\ell!}\sum\limits_{\substack{{s_1+\cdots+s_\ell=s}\\{s_t\ge1}}}\prod\limits_{t=1}^{\ell}Q_{s_t}(k).
$$
The above expression shows that $q_s(k)$ is a polynomial in $k$ of degree at most $s$. Combining~\eqref{EQ:residualfunction2},~\eqref{EQ:residualfunction3} and~\eqref{EQ:expfps}, 
we conclude that $C_{-1}(k) = (-1)^v q_p(k)$, which is a polynomial in $k$ of degree at most $p$. 
\end{proof}

\noindent
\textit{Proof of Theorem~\ref{THM:generalizedBeukersJouhet}}. Using the Cauchy product expansion, we have 
\begin{align} \label{EQ:targetsum2}
S(z) & := \sum_{i = 1}^r \left\{ \frac{(1 + \mathbf{a} - b_i)_{\mathbf{m} - n_i} (1 - b_i + \boldsymbol{\alpha}-n_i)_1 (b_i - 1 + \boldsymbol{\beta})_1 z^{- n_i}}{(\mathbf{b}_{[i]} - b_i)_{\mathbf{n}_{[i]} - n_i + 1}} \right. \nonumber \\
 & \left. \times {_{r + u}F_{r + u - 1}}\left(\left.\!\!\begin{array}{c} 1 + \mathbf{a} - b_i + \mathbf{m} - n_i, 2 - b_i + \boldsymbol{\alpha}-n_i\nonumber  \\ 1 + \mathbf{b}_{[i]} - b_i + \mathbf{n}_{[i]} - n_i, 1 - b_i + \boldsymbol{\alpha}-n_i \end{array} \right| z\!\right) 
\times {_{r + v}F_{r + v - 1}}\left(\left.\!\!\begin{array}{c} b_i - \mathbf{a}, b_i + \boldsymbol{\beta} \\ 1 - \mathbf{b}_{[i]}  +  b_i, b_i - 1 + \boldsymbol{\beta} \end{array} \right| z\!\right) \right\} \nonumber \\
& = \sum_{i = 1}^r \left( \frac{(1 + \mathbf{a} - b_i)_{\mathbf{m} - n_i} z^{-n_i}}{(\mathbf{b}_{[i]} - b_i)_{\mathbf{n}_{[i]} - n_i + 1}} \right. \nonumber  \\
&\left. \sum_{k = 0}^\infty z^k \sum_{j = 0}^k \frac{(b_i - \mathbf{a})_j (b_i + \boldsymbol{\beta} + j - 1)_1 (1 + \mathbf{a} - b_i + \mathbf{m} - n_i )_{k - j} (1 - b_i + \boldsymbol{\alpha}-n_i + k - j)_1}{(1 - \mathbf{b}_{[i]} + b_i)_j (1 + \mathbf{b}_{[i]} - b_i + \mathbf{n}_{[i]} - n_i)_{k - j} j! (k - j)!} \right) \nonumber \\
& = \sum_{i = 1}^r \left( \sum_{k = 0}^\infty z^{k - n_i} \right. \nonumber  \\
& \left. \sum_{j = 0}^k \frac{(1 + \mathbf{a} - b_i)_{\mathbf{m} - n_i} (b_i - \mathbf{a})_j (b_i + \boldsymbol{\beta} + j - 1)_1 (1 + \mathbf{a} - b_i + \mathbf{m} - n_i )_{k - j} (1 - b_i + \boldsymbol{\alpha}-n_i + k - j)_1}{(\mathbf{b}_{[i]} - b_i)_{\mathbf{n}_{[i]} - n_i + 1} (1 - \mathbf{b}_{[i]} + b_i)_j (1 + \mathbf{b}_{[i]} - b_i + \mathbf{n}_{[i]} - n_i)_{k - j} j! (k - j)!} \right) \nonumber \\
& = \sum_{i = 1}^r \sum_{k = 0}^\infty z^{k - n_i} \sum_{j = 0}^k \gamma_{i, j}^k = \sum_{i = 1}^r \sum_{k_i = - n_i}^\infty z^{k_i} \sum_{j = 0}^{k_i + n_i} \gamma_{i, j}^{k_i + n_i},
\end{align}
where
\begin{equation} \label{EQ:gammaexpression1}
\gamma_{i, j}^k = \frac{(1 + \mathbf{a} - b_i)_{\mathbf{m} - n_i} (b_i - \mathbf{a})_j (b_i + \boldsymbol{\beta} + j - 1)_1 (1 + \mathbf{a} - b_i + \mathbf{m} - n_i )_{k - j} (1 - b_i + \boldsymbol{\alpha}-n_i + k - j)_1}{(\mathbf{b}_{[i]} - b_i)_{\mathbf{n}_{[i]} - n_i + 1} (1 - \mathbf{b}_{[i]} + b_i)_j (1 + \mathbf{b}_{[i]} - b_i + \mathbf{n}_{[i]} - n_i)_{k - j} j! (k - j)!}.
\end{equation}
Using the fact that $(z)_j = (-1)^j (1 - z - j)_j$, we have
\begin{equation} \label{EQ:simplifyfactorial1}
(1 + \mathbf{a} - b_i)_{\mathbf{m} - n_i} (b_i - \mathbf{a})_j (1 + \mathbf{a} - b_i + \mathbf{m} - n_i)_{k - j} = (-1)^{rj} (1 + \mathbf{a} - b_i - j)_{\mathbf{m} + k - n_i}.
\end{equation}
Similarly, we deduce that
\begin{equation} \label{EQ:simplifyfactorial2}
(\mathbf{b}_{[i]} - b_i)_{\mathbf{n}_{[i]} - n_i + 1} (1 - \mathbf{b}_{[i]} + b_i)_j (1 + \mathbf{b}_{[i]} - b_i + \mathbf{n}_{[i]} - n_i)_{k - j} = (-1)^{(r - 1) j} (\mathbf{b}_{[i]} - b_i - j)_{\mathbf{n}_{[i]} + k - n_i + 1}. 
\end{equation}
By~\eqref{EQ:gammaexpression1},~\eqref{EQ:simplifyfactorial1}, and~\eqref{EQ:simplifyfactorial2}, we have 
\[
\gamma_{i, j}^k = \frac{(-1)^j (1 + \mathbf{a}- b_i - j)_{\mathbf{m} + k - n_i} (b_i + \boldsymbol{\beta} + j - 1)_1 (1 - b_i + \boldsymbol{\alpha}-n_i+ k - j)_1}{(\mathbf{b}_{[i]} - b_i - j)_{\mathbf{n}_{[i]} + k - n_i + 1} j! (k - j)!},
\]
and 
\begin{equation}\label{eq:gammafinal}
\gamma_{i, j}^{k + n_i} = \frac{(-1)^j (1 + \mathbf{a}- b_i - j)_{\mathbf{m} + k} (b_i + \boldsymbol{\beta} + j - 1)_1 (1 - b_i + \boldsymbol{\alpha} + k  - j)_1}{(\mathbf{b}_{[i]} - b_i - j)_{\mathbf{n}_{[i]} + k + 1} j! (k + n_i - j)!}.    
\end{equation}
Furthermore, we set $\gamma_{i, j}^{k + n_i} = 0$ if $k + n_i < 0$. Using this convention, we may write~\eqref{EQ:targetsum2} as 
\[
S(z) = \sum_{k = - n_{\max}}^\infty z^k \sum_{i = 1}^r \sum_{j = 0}^{k + n_i} \gamma_{i, j}^{k + n_i},
\]
where $n_{\max}$ is specified in~\eqref{EQ:mn}. If $- n_{\max} \geq - m_{\min}$, then $k \geq - m_{\min}$ for each term in the above sum. Otherwise, if $- n_{\max} < - m_{\min}$, then we may write 
\[
S(z) = \sum_{k = - n_{\max}}^{-m_{\min} - 1} \mu_k z^k + S_1(z),
\] 
where
\[
S_1(z) = \sum_{k = - m_{\min}}^\infty \mu_k z^k, \quad \quad \quad \mu_k = \sum_{i = 1}^r \sum_{j = 0}^{k + n_i} \gamma_{i, j}^{k + n_i}.
\]
Thus, it suffices to show that $S_1(z)$ can be written as a rational function in the form~$z^{-m_{\min}} P_p(z)/(1 - z)^{p + 1}$, where $p$ is specified by~\eqref{EQ:indexp}, and $P_p(z)$ is a polynomial of degree at most $p$ with the convention that $P_{-1}(z) \equiv 0$. 
In order to achieve this, for each $k \in \Z$ we set 
\[
f_k(z) = \frac{(z + \mathbf{a} - k + 1)_{\mathbf{m} + k} (z + \boldsymbol{\alpha} +  1)_1 (- z + \boldsymbol{\beta} + k - 1)_1}{(z + \mathbf{b} - k)_{\mathbf{n} + k + 1}}, 
\]
which is the same function~\eqref{EQ:residualfunction} specified in Lemma~\ref{LEM:generalizedBeukersJouhet}. Note that $f_k(z)$ is well defined and rational for each integer $k$. Moreover, for $k \geq - m_{\min}$ all rising factorials in the product $(z + \mathbf{a} - k + 1)_{\mathbf{m} + k}$ are 
polynomials in $z$ and all poles of the rational function $f_k(z)$ are points such that $(z + b_i - k)_{k + n_i + 1} = 0$ with $k + n_i + 1 > 0$, {\it i.e.}, the points $z = - b_i + k - j$ for $j = 0, \ldots, k + n_i$. Since the components of $\mathbf{b}$ are distinct modulo integers, we see that 
all poles of $f_k(z)$ are simple. By a straightforward calculation, we have 
\begin{align*}
\res_{z = - b_i + k - j} f_k(z) & = \frac{(-1)^j (1 + \mathbf{a}- b_i - j)_{\mathbf{m} + k} (b_i + \boldsymbol{\beta} + j - 1)_1 (1 - b_i + \boldsymbol{\alpha} + k - j)_1}{(\mathbf{b}_{[i]} - b_i - j)_{\mathbf{n}_{[i]} + k + 1} j! (k + n_i - j)!} \\
& = \gamma_{i, j}^{k + n_i}.
\end{align*} 
Therefore, for all $k \geq - m_{\min}$ we have 
\[
\sum_{\text{over all poles of } f_k(z)} \res f_k(z) = \sum_{i = 1}^r \sum_{j = 0}^{k + n_i} \gamma_{i, j}^{k + n_i},
\]
where only the terms with $k + n_i \geq 0$ are non-vanishing. Using the fact that the sum of residues of a rational function at all finite points equals its residue at infinity, which is the coefficient at $z^{-1}$ in the asymptotic expansion
\begin{equation} \label{EQ:asymp2}
f_k(z) \sim \sum_{j = - \infty}^{M + u + v - N - r} C_j(k) z^j,~~~z\to\infty,
\end{equation}
which is the same as~\eqref{EQ:residualfunction2} in Lemma~\ref{LEM:generalizedBeukersJouhet}, we have 
\[
\sum_{i = 1}^r \sum_{j = 0}^{k + n_i} \gamma_{i, j}^{k + n_i} = C_{-1}(k).
\]
By Lemma~\ref{LEM:generalizedBeukersJouhet}, we see that $C_{-1}(k)$ is a polynomial in $k$ of degree at most $p$. Thus, we may write $C_{-1}(k) = \sum_{\ell = 0}^p w_\ell k^\ell$. 
Then we have 
\begin{align}\label{eq:S1C-1}
S_1(z) & = \sum_{k = - m_{\min}}^\infty z^k \sum_{i = 1}^r \sum_{j = 0}^{k + n_i} \gamma_{i, j}^{k + n_i}  = \sum_{k = - m_{\min}}^\infty z^k C_{-1}(k) \\
& = \sum_{k = - m_{\min}}^\infty z^k \sum_{\ell = 0}^p w_\ell k^\ell = z^{- m_{\min}} \sum_{\ell = 0}^p w_\ell \sum_{j = 0}^\infty (j - m_{\min})^\ell z^j. \nonumber
\end{align} 
It is straightforward to find that 
\[
\sum_{j = 0}^\infty (j - m_{\min})^\ell z^j = \frac{\bar{P}_\ell(z)}{(1  - z)^{\ell+1}},
\]
where $\bar{P}_\ell(z)$ is a polynomial in $z$ of degree at most $\ell$. Therefore, we have 
\begin{equation*}
S_1(z)  =  z^{- m_{\min}} \sum_{\ell = 0}^p w_\ell \frac{\bar{P}_\ell(z)}{(1  - z)^{\ell+1}}=: \frac{z^{- m_{\min}} P_p(z)}{(1 - z)^{p + 1}},
\end{equation*} 
where $P_p(z)$ is a polynomial in $z$ of degree at most $p$. Hence, we conclude that 
\begin{align} \label{EQ:lastformula}
S(z) & = \sum_{k = - n_{\max}}^{-m_{\min} - 1} \mu_k z^k + S_1(z) \\
& =  \sum_{k = - n_{\max}}^{-m_{\min} - 1} \mu_k z^k +   \frac{z^{- m_{\min}} P_p(z)}{(1 - z)^{p + 1}} \nonumber \\
& = (1 - z)^{- p - 1} \sum_{j = - n_{\max}}^{p - m_{\min}} \lambda_j z^j,  \nonumber
\end{align}
where $\lambda_j \in \C$ are certain coefficients defined for  $j = - n_{\max}, \ldots, p - m_{\min}$. Explicit formula \eqref{eq:beta-final} for $\lambda_j$ is established in Proposition~\ref{prop:beta-final} below. 
\hfill
\qedsymbol

\begin{remark}
Define two polynomial families \emph{(}$i=1,\ldots,r$\emph{):}
$$
G^{(i)}_v(y)=\prod\limits_{\ell=1}^{v}(b_i+\beta_{\ell}-1+y),~~~~H^{(i)}_u(y)=\prod\limits_{\ell=1}^{u}(1-b_i-n_i+\alpha_{\ell}+y).
$$
Then identity \eqref{EQ:generalizedBeukersJouhet} can be viewed as a polynomial perturbation  of \cite[Theorem~1]{KK} in the sense of  \cite{KarpAAM2026}. 
Namely, using the notation 
$$
F\!\left(\!\!\begin{array}{c}\mathbf{a}\\\mathbf{b}\end{array}\bigg\vert\,P_m\,\bigg\vert\, x\right):=\sum_{k=0}^{\infty}\frac{(\a)_{k}}{(\b)_kk!}P_m(k)x^k
$$
for perturbation of $F(\mathbf{a};\mathbf{b};x)$ by a polynomial $P_m$ of degree $m$, the left hand side of \eqref{EQ:generalizedBeukersJouhet} takes the form 
\begin{equation*}
\sum_{i = 1}^r \frac{(1 + \mathbf{a} - b_i)_{\mathbf{m} - n_i}}{(\mathbf{b}_{[i]} - b_i)_{\mathbf{n}_{[i]} - n_i + 1}z^{n_i}}
{F}\!\left(\!\!\begin{array}{c} b_i - \mathbf{a} \\ 1 - \mathbf{b}_{[i]}  +  b_i\end{array} \,\bigg\vert\,G^{(i)}_v\,\bigg\vert\, z\!\right)
{F}\!\left(\!\!\!\begin{array}{c} 1 + \mathbf{a} - b_i + \mathbf{m} - n_i \\ 1 + \mathbf{b}_{[i]} - b_i + \mathbf{n}_{[i]} - n_i\end{array} \,\bigg\vert\,H^{(i)}_u\,\bigg\vert\,z\!\right).
\end{equation*}
The original  identity~\cite[(3)]{KK} is recovered by setting the degrees to be zero: $u = v = 0$.
\end{remark}

An explicit expression \eqref{eq:beta-final} for the coefficients  $\lambda_k$ is established in the following proposition.
\begin{prop}\label{prop:beta-final}
The coefficient $\lambda_j$ on the right-hand side of \eqref{EQ:generalizedBeukersJouhet} is given by formula \eqref{eq:beta-final}.
\end{prop}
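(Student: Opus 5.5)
From the proof of Theorem~\ref{THM:generalizedBeukersJouhet} we know the left-hand side $S(z)$ equals $(1-z)^{-p-1}\sum_{j=-n_{\max}}^{p-m_{\min}}\lambda_j z^j$. Its Laurent expansion at $z=0$ is $S(z)=\sum_{k\ge -n_{\max}}\mu_k z^k$, with
\[
\mu_k=\sum_{i=1}^r\sum_{j=0}^{k+n_i}\gamma_{i,j}^{k+n_i},
\]
where $\gamma_{i,j}^{k+n_i}$ is given by \eqref{eq:gammafinal}. The plan is to multiply both sides by $(1-z)^{p+1}$ and compare coefficients. Expanding $(1-z)^{p+1}$ by the binomial theorem gives
\[
\lambda_j=\sum_{\ell}\binom{p+1}{j-\ell}(-1)^{j-\ell}\mu_\ell .
\]
Here $\ell$ runs over $\max(-n_{\max},j-p-1)\le \ell\le j$, since $\mu_\ell=0$ for $\ell<-n_{\max}$ and the binomial coefficient vanishes unless $0\le j-\ell\le p+1$. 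This reproduces the outer sum and the binomial factor of \eqref{eq:beta-final}. No identity for rational functions is needed at this stage: the equality of formal Laurent series is immediate from the theorem.

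The remaining task is to write the inner sum $\sum_{j'=0}^{\ell+n_i}\gamma_{i,j'}^{\ell+n_i}$ as a terminating hypergeometric series at unit argument, i.e.\ the series $F(\cdots)$ in \eqref{eq:beta-final}. Take the term $j'=0$ of \eqref{eq:gammafinal} as the prefactor. It equals
\[
\frac{(1+\a-b_i)_{\m+\ell}\,(b_i+\bbeta-1)_1\,(1-b_i+\aalpha+\ell)_1}{(\b_{[i]}-b_i)_{\n_{[i]}+\ell+1}\,(\ell+n_i)!},
\]
where the $i$-th component of $(\b-b_i)_{\n+\ell+1}$, which would be $(0)_{n_i+\ell+1}$, is removed by the residue computation in the proof of the theorem. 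Rewriting $(1-b_i+\aalpha+\ell)_1=(-1)^u(b_i-\aalpha-\ell-1)_1$ gives the factor $(-1)^u$ and the product $(b_i-\aalpha-\ell-1)_1$ appearing in \eqref{eq:beta-final}.

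Next compute the ratio $\gamma_{i,j'}^{\ell+n_i}/\gamma_{i,0}^{\ell+n_i}$ factor by factor, using $(x-j')_{N}=(x)_N\,(x-j')_{j'}/(x+N-j')_{j'}$ and $(x-j')_{j'}=(-1)^{j'}(1-x)_{j'}$. The factors transform as follows.
\begin{itemize}
\item $(1+\a-b_i-j')_{\m+\ell}$ contributes $(b_i-\a)_{j'}/(b_i-\a-\m-\ell)_{j'}$ times a sign $(-1)^{rj'}$ that cancels against the denominator.
\item $(\b_{[i]}-b_i-j')_{\n_{[i]}+\ell+1}$ contributes $(1-\b_{[i]}+b_i-\n_{[i]}-\ell-1)_{j'}/(1-\b_{[i]}+b_i)_{j'}$.
\item $(b_i+\bbeta+j'-1)_1$ gives $(b_i+\bbeta)_{j'}/(b_i+\bbeta-1)_{j'}$.
\item $(1-b_i+\aalpha+\ell-j')_1$ gives $(b_i-\aalpha-\ell)_{j'}/(b_i-\aalpha-\ell-1)_{j'}$.
\item $1/(j'!\,(\ell+n_i-j')!)$ gives $(-\ell-n_i)_{j'}/j'!$ up to a sign.
\end{itemize}
Collecting these yields exactly the parameter lists of the $F$ in \eqref{eq:beta-final}, with argument $1$, terminating because of the numerator parameter $-\ell-n_i$.

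The main obstacle is the sign bookkeeping: the signs $(-1)^{j'}$, $(-1)^{rj'}$, $(-1)^{(r-1)j'}$ and those from reversing factorials must combine to give argument exactly $+1$. This is checked by counting numerator and denominator parameters. A secondary point is the convention: when $\ell+n_i<0$, the sum over $j'$ is empty, and the corresponding term is declared zero, consistent with $\gamma^{k+n_i}_{i,j}=0$ for $k+n_i<0$. Finally, note that the expression is valid for every $j$ in the range, including $j<-m_{\min}$ where the $\mu_k$ are not residue sums. The derivation uses only the series coefficients, so no case split is required.
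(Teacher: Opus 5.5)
Your proposal is correct and follows essentially the same route as the paper: rewrite $\gamma^{k+n_i}_{i,j}$ from \eqref{eq:gammafinal} via $(x-j)_N=(x)_N(1-x)_j/(1-x-N)_j$ and $(m-j)!=(-1)^j m!/(-m)_j$ to express the $z^k$-coefficient as a terminating series at unit argument, then multiply by $(1-z)^{p+1}$ and compare coefficients (you only do the two steps in the opposite order). The sign bookkeeping is simpler than you suggest: each Pochhammer ratio is sign-free, and the explicit $(-1)^{j'}$ in \eqref{eq:gammafinal} cancels the $(-1)^{j'}$ from $1/(\ell+n_i-j')!$, so no $(-1)^{rj'}$ or $(-1)^{(r-1)j'}$ factors arise at this stage.
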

\begin{proof}
Writing $S(z)$ for the left hand side of \eqref{EQ:targetsum2}, we get by collecting terms
$$
S(z)=\sum\limits_{k=-n_{\max}}^{\infty}z^k
\underbrace{\sum\limits_{i=1}^{r}\sum\limits_{j=0}^{k+n_i}\gamma^{k+n_i}_{i,j}}_{=\delta_k}=\sum\limits_{k=-n_{\max}}^{\infty}\delta_{k}z^k.
$$
According to \eqref{eq:gammafinal}, we have the first equality below:
\begin{multline*}
\gamma_{i,j}^{k+n_i} = \frac{(-1)^j (1 + \mathbf{a}- b_i - j)_{\mathbf{m} + k} (b_i + \boldsymbol{\beta} + j - 1)_1 (1 - b_i + \boldsymbol{\alpha} + k  - j)_1}{(\mathbf{b}_{[i]} - b_i - j)_{\mathbf{n}_{[i]} + k + 1} j! (k + n_i - j)!}
\\
=\frac{(1+\a-b_i)_{\m+k}(b_i-\a)_{j}(1-\b_{[i]}+b_i-\n_{[i]}-k-1)_{j}(-k-n_i)_{j}}{(b_i-\a-\m-k)_{j}(\b_{[i]}-b_i)_{\n_{[i]}+k+1}(1-\b_{[i]}+b_i)_{j}(k+n_i)!j!}
\\
\times(-1)^{u}\frac{(b_i+\boldsymbol{\beta}-1)_{1}(b_i+\boldsymbol{\beta})_{j}(b_i-\boldsymbol{\alpha}-k-1)_{1}(b_i-\boldsymbol{\alpha}-k)_{j}}{(b_i+\boldsymbol{\beta}-1)_{j}(b_i-\boldsymbol{\alpha}-k-1)_{j}}.
\end{multline*}
The second equality above is obtained by an application of the following easily verifiable identities
$$
(z-j)_{n}=\frac{(z)_n(1-z)_{j}}{(1-z-n)_{j}}~~\text{and}~~(m-j)!=(-1)^j\frac{m!}{(-m)_{j}}.
$$
Hence,
\begin{multline}\label{eq:delta-defined}
\delta_k=
(-1)^{u}\sum\limits_{i=1}^{r}\frac{(b_i+\boldsymbol{\beta}-1)_{1}(b_i-\boldsymbol{\alpha}-k-1)_{1}(1+\a-b_i)_{\m+k}}{(\b_{[i]}-b_i)_{\n_{[i]}+k+1}(k+n_i)!}
\\
\times F\!\left(\!\left.\!\begin{array}{l}-k-n_i,b_i-\a,1-\b_{[i]}+b_{i}-\n_{[i]}-k-1,b_i+\boldsymbol{\beta},b_i-\boldsymbol{\alpha}-k\\b_{i}-\a-\m-k,1-\b_{[i]}+b_{i},b_i+\boldsymbol{\beta}-1,b_i-\boldsymbol{\alpha}-k-1\end{array}\!\right.\right).
\end{multline}
Multiplying both sides of \eqref{EQ:generalizedBeukersJouhet} by $(1-z)^{p+1}$ and expanding by the binomial theorem,  we obtain
$$
\sum_{j=0}^{p+1}\binom{p+1}{j}(-z)^j\sum\limits_{k=-n_{\max}}^{\infty}\delta_{k}z^k=\sum\limits_{k=-n_{\max}}^{p-m_{\min}}\lambda_{k}z^{k}.
$$
Multiplying both sides by $z^{n_{\max}}$, changing $k+n_{\max}\to{k}$ and writing $\hat{\lambda}_k=\lambda_{k-n_{\max}}$,
$\hat{\delta}_k=\delta_{k-n_{\max}}$, we get
$$
\sum_{j=0}^{p+1}\binom{p+1}{j}(-1)^jz^{j}\sum\limits_{k=0}^{\infty}\hat{\delta}_{k}z^k
=\sum\limits_{s=0}^{\infty}z^{s}\sum\limits_{j+k=s}\binom{p+1}{j}(-1)^{j}\hat{\delta}_{k}
=\sum\limits_{k=0}^{p-m_{\min}+n_{\max}}\hat{\lambda}_{k}z^{k}.
$$
In view of $\binom{p+1}{j}=0$ for $j>p+1$, this implies that
$$
\lambda_{s-n_{\max}}=\hat{\lambda}_s=\sum\limits_{j=0}^{\min(s,p+1)}\binom{p+1}{j}(-1)^{j}\hat{\delta}_{s-j}
=\sum\limits_{j=0}^{\min(s,p+1)}\binom{p+1}{j}(-1)^{j}\delta_{s-j-n_{\max}}
$$
for $s=0,\ldots,p-m_{\min}+n_{\max}$. Returning to $k=s-n_{\max}$, we obtain by changing the index of summation according to the rule $j\to{k-j}$:
$$
\lambda_{k}=\sum\limits_{j=0}^{\min(k+n_{\max},p+1)}\binom{p+1}{j}(-1)^{j}\delta_{k-j}=
\sum\limits_{j=\max(-n_{\max},k-p-1)}^{k}\binom{p+1}{k-j}(-1)^{k-j}\delta_{j}
$$
for $k=-n_{\max},\ldots,p-m_{\min}$. Substituting formula \eqref{eq:delta-defined} for $\delta_{j}$ and renaming indices, we finally arrive at  \eqref{eq:beta-final}.
\end{proof}

\noindent Below we present two explicit examples of the right-hand side of \eqref{EQ:generalizedBeukersJouhet} computed using \eqref{eq:beta-final} with $r = 3$ and $u=v=1$.
\begin{example} \label{EX: diffex1}
Set $\mathbf{m} = (1,1,2)$, $\mathbf{n} = (1,2,2)$,  $\boldsymbol{\alpha} = (1/2)$, and $\boldsymbol{\beta} = (1/3)$. Then the right-hand side of~\eqref{EQ:generalizedBeukersJouhet} takes the form 
\[
\frac{\left(-b_2-1/2\right) \left(b_2-2/3\right)}{(a_1- b_2) (a_2- b_2)
   (b_3- b_2) z^2}+\frac{\left(- b_3-1/2\right) \left(b_3-2/3\right)}{(a_1- b_3) ( a_2 - b_3) (b_2- b_3) z^2}.
\] 
\end{example}

\begin{example} \label{EX: diffex2}
Set $\mathbf{m} = (2,2,2)$, $\mathbf{n} = (0,2,3)$, $\boldsymbol{\alpha} = (1/5)$, and $\boldsymbol{\beta} = (1/7)$. Then the right-hand side of~\eqref{EQ:generalizedBeukersJouhet} takes the form 
\begin{multline*}
\frac{1}{(1 - z)^2 z^3}\left[ z^2 \left(\frac{\left(-b_3-9/5\right) \left(b_3-6/7\right) (b_1-b_3-2) (b_1-b_3-1)}{(a_1-b_3) (a_2-b_3)
   (a_3-b_3)} \right. \right.  \\ +\frac{1}{35} (-35 a_1-35 a_2-35 a_3+70 b_2+105 b_3-2)  \\
   \left. -\frac{2}{35} (-35 b_1+35 b_2+70 b_3+68)\right)+z
   \left(\frac{1}{35} (-35 b_1+35 b_2+70 b_3+68) \right. \\
 \left. \left. -\frac{2 \left(-b_3-9/5\right) \left(b_3-6/7\right) (b_1-b_3-2)
   (b_1-b_3-1)}{(a_1-b_3) (a_2-b_3) (a_3-b_3)}\right)+\frac{\left(-b_3-9/5\right) \left(b_3-6/7\right)
   (b_1-b_3-2) (b_1-b_3-1)}{(a_1-b_3) (a_2-b_3) (a_3-b_3)} \right]. 
\end{multline*}
\end{example}

The Beukers-Jouhet identity~\cite[Theorem 1.1]{BJ} is recovered from~\eqref{EQ:generalizedBeukersJouhet} by taking $\mathbf{m}, \mathbf{n}, \boldsymbol{\alpha}$ and $\boldsymbol{\beta}$ to be the zero vectors of the appropriate sizes. 
A generalization of Beukers-Jouhet identity hinted at in \eqref{eq:BeukersJouhetGen} can now be written explicitly by setting $\m=\n=\mathbf{0}$ in Theorem~\ref{THM:generalizedBeukersJouhet}.

\begin{corollary}\label{cr:beukersJouhetGen}
Suppose $\vartheta(\boldsymbol{\alpha})$ is defined in \eqref{eq:varthetaalpha},  $u,v\ge0$ are  integers, $\a,\b\in \C^r$, the components of $\b$ are distinct modulo integers, and $\boldsymbol{\alpha}\in\C^{u}$, $\boldsymbol{\beta}\in\C^v$. Then
\begin{equation*}
\sum\limits_{i=1}^{r}\frac{1}{(\b_{[i]}-b_i)_{1}}
\vartheta(\aalpha)z^{1-b_i}{_{r}F_{r-1}}\!\left(\left.\!\!\begin{array}{c}1+\a-b_i\\1+\b_{[i]}-b_i\end{array}\right|z\!\right)
\vartheta(\bbeta)z^{b_i-1}
{_{r}F_{r-1}}\!\left(\left.\!\!\begin{array}{c}b_i-\a,\\1-\b_{[i]}+b_i\end{array}\right|z\!\right)=(1 - z)^{-p-1} \sum_{j =0}^{p} \lambda_j z^j,
\end{equation*}
where $p = \max\{-1, u+v-r+1\}$. Moreover, if $p\ge0$, we have 
\begin{multline*}
\lambda_{j}=
\sum\limits_{\ell=0}^{j}\binom{p+1}{j-\ell}(-1)^{j-\ell+u}\sum\limits_{i=1}^{r}
(b_i+\boldsymbol{\beta}-1)_{1}(b_i-\boldsymbol{\alpha}-\ell-1)_{1}
\\
\times\frac{(1+\a-b_i)_{\ell}}{(\b_{[i]}-b_i)_{\ell+1}\ell!}
F\!\left(\!\left.\!\begin{array}{l}-\ell,b_i-\a,1-\b_{[i]}+b_{i}-\ell-1,b_i+\boldsymbol{\beta},b_i-\boldsymbol{\alpha}-\ell\\b_{i}-\a-\ell,1-\b_{[i]}+b_{i},b_i+\boldsymbol{\beta}-1,b_i-\boldsymbol{\alpha}-\ell-1\end{array}\!\right.\right).
\end{multline*}
\end{corollary}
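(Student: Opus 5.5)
The plan is to specialize Theorem~\ref{THM:generalizedBeukersJouhet} to $\m=\n=\mathbf{0}$ after rewriting the left-hand side in the form used there. For this, apply \eqref{eq:xbetaDxalpha} iteratively to each factor $\vartheta(\alpha_t)=z^{1-\alpha_t}\frac{d}{dz}z^{\alpha_t}$. I claim by induction on $u$ that
\[
\vartheta(\alpha_u,\ldots,\alpha_1)\,z^{1-b_i}\,{}_{r}F_{r-1}(\cdots)=(1-b_i+\aalpha)_1\, z^{1-b_i}\,{}_{r+u}F_{r+u-1}\!\left(\left.\!\!\begin{array}{c}1+\a-b_i,\,2-b_i+\aalpha\\1+\b_{[i]}-b_i,\,1-b_i+\aalpha\end{array}\right|z\right).
\]

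The induction step works as follows. Applying $\vartheta(\alpha)$ to $z^{c}\,{}_pF_q$ means computing $z^{1-\alpha}\frac{d}{dz}[z^{c+\alpha}{}_pF_q]$. By \eqref{eq:xbetaDxalpha} this equals $(c+\alpha)z^{c}\,{}_{p+1}F_{q+1}$ with the extra pair $c+\alpha+1$ over $c+\alpha$. So the power $z^{c}$ is preserved and only one parameter pair is appended. With $c=1-b_i$ this gives the factor $(1-b_i+\alpha_t)$ and the pair $2-b_i+\alpha_t$ over $1-b_i+\alpha_t$. The same computation with $c=b_i-1$ gives the factor $(b_i-1+\bbeta)_1$ and the pairs $b_i+\bbeta$ over $b_i-1+\bbeta$.

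Multiplying the two results, the powers $z^{1-b_i}$ and $z^{b_i-1}$ cancel. What remains is exactly the summand of \eqref{EQ:generalizedBeukersJouhet} with $\m=\n=\mathbf{0}$: there $(1+\a-b_i)_{\mathbf 0}=1$, $(\b_{[i]}-b_i)_{\mathbf 1}=(\b_{[i]}-b_i)_1$ and $z^{n_i}=1$. The one caveat is the degenerate case $\alpha_t=b_i-1$, where the parameter $1-b_i+\alpha_t$ of the hypergeometric series is zero. I will handle it with the convention stated after \eqref{eq:xbetaDxalpha}, namely that $\alpha(\alpha+1)_k/(\alpha)_k=\alpha+k$, which keeps both sides as the same polynomially perturbed series, as in the Remark.

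Now apply Theorem~\ref{THM:generalizedBeukersJouhet} with $M=N=0$, $m_{\min}=0$ and $n_{\max}=0$. Then $p=\max\{-1,u+v-r+1\}$, and the summation range $-n_{\max}\le j\le p-m_{\min}$ becomes $0\le j\le p$. In \eqref{eq:beta-final} the lower limit $\max(-n_{\max},j-p-1)$ equals $0$ because $j\le p$. Substituting $\m=\n=\mathbf 0$ into \eqref{eq:beta-final} gives $(1+\a-b_i)_{\ell}$, $(\b_{[i]}-b_i)_{\ell+1}$, $\ell!$, and the hypergeometric parameters $-\ell$ and $1-\b_{[i]}+b_i-\ell-1$ in the numerator, with $b_i-\a-\ell$ in the denominator. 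This matches the stated formula for $\lambda_j$.

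If $p=-1$, the right-hand side is the empty sum, i.e.\ zero, consistent with the theorem. The only point needing care is the bookkeeping in the iterated differentiation and the degenerate-parameter convention; everything else is direct substitution.
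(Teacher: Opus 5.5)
Your proposal is correct and is essentially the paper's own argument: convert $\vartheta(\aalpha)z^{1-b_i}{}_rF_{r-1}$ and $\vartheta(\bbeta)z^{b_i-1}{}_rF_{r-1}$ into the parameter-augmented ${}_{r+u}F_{r+u-1}$ and ${}_{r+v}F_{r+v-1}$ via \eqref{eq:xbetaDxalpha} (as at the start of Section~2), then specialize Theorem~\ref{THM:generalizedBeukersJouhet} and \eqref{eq:beta-final} to $\mathbf{m}=\mathbf{n}=\mathbf{0}$, so that $M=N=0$, $m_{\min}=n_{\max}=0$ and $\ell$ starts at $0$. One small correction: the degenerate-parameter convention is needed whenever $1-b_i+\alpha_t$ or $b_i-1+\beta_t$ is a nonpositive integer, not only when it equals zero, but the convention you cite already covers all these cases.
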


Define 
\begin{equation}\label{eq:munu-defined}
\begin{split}
&\nu=\overbrace{\sum\nolimits_{k=1}^{r}b_k}^{=B}-\overbrace{\sum\nolimits_{k=1}^{r}a_k}^{=A}+v-r+1,
\\
&\mu=B-A+N-M-u-1.
\end{split}
\end{equation}
The following result has not previously appeared even for particular cases of identity \eqref{eq:BeukersJouhetGen}. 
\begin{corollary}\label{cr:mutitermat1}
Suppose that $\mu,\nu>0$.  Then 
\begin{multline}\label{eq:mutitermat1} 
\sum_{i = 1}^r \frac{\Gamma(1 + \mathbf{a} - b_i+\mathbf{m} - n_i) (1 - b_i + \boldsymbol{\alpha}-n_i)_1 }{\Gamma(\mathbf{b}_{[i]} - b_i+\mathbf{n}_{[i]} - n_i + 1)}
\frac{\sin[\pi(b_i-\mathbf{a})]}{\sin[\pi(\mathbf{b}_{[i]} - b_i)]}
\\
\times {_{r+u}F_{r +u-1}}\!\left(\left.\!\!\!\begin{array}{c} 1 + \mathbf{a} - b_i + \mathbf{m} - n_i, 2 - b_i + \boldsymbol{\alpha}-n_i \\ 1 + \mathbf{b}_{[i]} - b_i + \mathbf{n}_{[i]} - n_i, 1 - b_i + \boldsymbol{\alpha}-n_i\end{array} \right| 1\!\right) 
=0.
\end{multline}
\end{corollary}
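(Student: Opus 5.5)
The plan is to take Theorem~\ref{THM:generalizedBeukersJouhet}, multiply both sides by $(1-z)^{p+1}$, and let $z\to1^-$. The right-hand side of \eqref{EQ:generalizedBeukersJouhet} times $(1-z)^{p+1}$ is a Laurent polynomial, so it tends to a finite constant. On the left, each term is a product of an ${}_{r+v}F_{r+v-1}$ and an ${}_{r+u}F_{r+u-1}$ at $z$. The idea is that the first factor has a genuine singularity at $z=1$ of order $(1-z)^{-\nu}$ (up to a possible logarithm), while the second converges at $z=1$. Comparing the singular behaviour then forces a linear relation among the values of the second factors at $1$.

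\emph{Parametric excess of the two factors.} For the ${}_{r+v}F_{r+v-1}$ with numerators $b_i-\a,\,b_i+\bbeta$ and denominators $1-\b_{[i]}+b_i,\,b_i-1+\bbeta$, the excess (sum of denominators minus sum of numerators) is
\[
(r-1)+rb_i-(B-b_i)-v+(A\ \text{terms})=A-B+r-1-v=-\nu .
\]
Hence, since $\nu>0$, the standard asymptotics give
\[
{}_{r+v}F_{r+v-1}(z)\sim \Gamma(\nu)\,\frac{\Gamma(1-\b_{[i]}+b_i)\,\Gamma(b_i-1+\bbeta)}{\Gamma(b_i-\a)\,\Gamma(b_i+\bbeta)}\,(1-z)^{-\nu}.
\]
For the ${}_{r+u}F_{r+u-1}$ the excess is
\[
\bigl(B-b_i+N-n_i-(r-1)b_i+\dots\bigr)-(\dots)=B-A+N-M-u-1=\mu>0,
\]
so it converges at $z=1$ and is continuous from the left there.

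\emph{Comparing singular orders.} Multiply \eqref{EQ:generalizedBeukersJouhet} by $(1-z)^{\nu}$ and let $z\to1^-$. The left side tends to
\[
\Gamma(\nu)\sum_i c_i\,\frac{\Gamma(1-\b_{[i]}+b_i)\,\Gamma(b_i-1+\bbeta)}{\Gamma(b_i-\a)\,\Gamma(b_i+\bbeta)}\,{}_{r+u}F_{r+u-1}(1),
\]
where $c_i$ is the prefactor of the $i$-th term. The right side behaves like $(1-z)^{\nu-p-1}$. We must therefore check that $\nu>p+1$. Since $\nu+\mu=N-M-u-1+v-r+1+\dots$, more precisely
\[
\nu-(M-N+u+v-r+2)=B-A+N-M-u-1=\mu>0 .
\]
So if $p=M-N+u+v-r+1$, then $\nu-p-1=\mu>0$. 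If instead $p=-1$, the right side vanishes identically. In either case the limit of the right side is $0$.

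\emph{Simplifying the limit.} We are left with the vanishing of the displayed sum, and the main work is to simplify the coefficients. The factor $(b_i-1+\bbeta)_1\,\Gamma(b_i-1+\bbeta)/\Gamma(b_i+\bbeta)$ equals $1$. The remaining quantity is
\[
\frac{(1+\a-b_i)_{\m-n_i}\,\Gamma(1-\b_{[i]}+b_i)}{(\b_{[i]}-b_i)_{\n_{[i]}-n_i+1}\,\Gamma(b_i-\a)} .
\]
To rewrite it, use
\[
\Gamma(b_i-\a)=\frac{\pi}{\sin[\pi(b_i-\a)]\,\Gamma(1+\a-b_i)},\qquad \Gamma(1-\b_{[i]}+b_i)=\frac{\pi}{\sin[\pi(\b_{[i]}-b_i)]\,\Gamma(\b_{[i]}-b_i)} .
\]
This converts it into
\[
\pi^{\,r-1-r}\,\frac{\Gamma(1+\a-b_i+\m-n_i)}{\Gamma(\b_{[i]}-b_i+\n_{[i]}-n_i+1)}\,\frac{\sin\pi(b_i-\a)}{\sin\pi(\b_{[i]}-b_i)}\,z^{-n_i}\Big|_{z=1}.
\]
The common constant $\Gamma(\nu)\pi^{-1}$ is then dropped. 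Together with the factor $(1-b_i+\aalpha-n_i)_1$, which is carried along unchanged, this gives exactly \eqref{eq:mutitermat1}.

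The main obstacle is justifying the asymptotics: each term separately needs $\nu>0$ and convergence of the second factor. By analytic continuation we may first assume generic parameters, so that no gamma poles occur, and then extend.
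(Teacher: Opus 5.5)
Your proposal is correct and follows the paper's proof. As there, you multiply \eqref{EQ:generalizedBeukersJouhet} by $(1-z)^{\nu}$ and let $z\to1$, using \eqref{eq:asympz1} on the ${}_{r+v}F_{r+v-1}$ factor (excess $-\nu$) and convergence at $1$ of the ${}_{r+u}F_{r+u-1}$ factor (excess $\mu$), let the right-hand side vanish in the limit, and rewrite the Gamma ratios via the reflection formula.

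There is one slip: for $p=-1$ the right-hand side is not identically zero but the Laurent polynomial $\sum_{j=-n_{\max}}^{-1-m_{\min}}\lambda_j z^j$. It still tends to $0$ after multiplication by $(1-z)^{\nu}$, because $\nu>0$. For $p\ge0$, your identity $\nu-p-1=\mu>0$ gives exactly the strict inequality the limit needs; the paper only records $\nu-p-1\ge0$.
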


\begin{remark}
Formula \eqref{eq:mutitermat1} does not seem to reduce to known multi-term identities for non-terminating hypergeometric series evaluated at unity, see \cite[(5.1),(5.3)]{CKP2021}.    
\end{remark}

\begin{proof}
We will use the following
asymptotic relation
\begin{equation}\label{eq:asympz1}
\lim\limits_{z\to1}\, (1-z)^{\psi}{}_{r}F_{r-1}\bigg(\!\left.\begin{matrix}\mathbf{c}\\\mathbf{d}\end{matrix}\,\right\vert\, z\bigg)=\frac{\Gamma(\mathbf{d})\Gamma(\psi)}{\Gamma(\mathbf{c})},
\end{equation}
which is valid when  $\psi=\sum_{j=1}^{r}c_j-\sum_{j=1}^{r-1}d_j>0$.
Note that the parametric excess (sum of the bottom parameters  minus sum of the top parameters) of the first hypergeometric factor in \eqref{EQ:generalizedBeukersJouhet} is precisely $-\nu$, which is defined by~\eqref{eq:munu-defined}, while the second factor is $\mu$.  By assumption, we have $\mu,\nu>0$ and $\nu-p-1\ge0$, where $p$ is defined in \eqref{EQ:mn}.  Multiplying \eqref{EQ:generalizedBeukersJouhet} by $(1-z)^{\nu}$ and taking the limit $z\to1$ in view of \eqref{eq:asympz1} (with $\psi=\nu>0$), we arrive at \eqref{eq:mutitermat1}.
\end{proof}

\medskip

Next, we turn our attention to the confluent case of Theorem~\ref{THM:generalizedBeukersJouhet} for  the hypergeometric function ${_{p}F_{q}}$ with $0 \leq p \leq q$, $p \geq 0$.  
Denote by $\floor{x}$ the floor function, whose value is defined to be the greatest integer less than or equal to the real number $x$. 

\begin{theorem}\label{THM:generalizedBeukersJouhet2}
Suppose that  $0\le s \le r - 1$, $r\ge2$, $u,v\ge0$ are integers,  $\mathbf{a}\in \C^s$, $\mathbf{b} \in \C^r$ has components distinct modulo integers,  $\mathbf{m}\in \Z^s$,  $\mathbf{n}\in \Z^r$, and $\boldsymbol{\alpha}\in\C^{u}$, $\boldsymbol{\beta}\in\C^v$. Set
\begin{equation}\label{eq:mnconfluent}
M = \sum_{i=1}^s m_i, \quad N = \sum_{i = 1}^r n_i, \quad m_{\min} = \min_{1 \leq i \leq s} (m_i), \quad n_{\max} = \max_{1 \leq i \leq r}(n_i), 
\end{equation}
which is the same as \eqref{EQ:mn} up to the new size of $\mathbf{m}$, and let 
\begin{equation}\label{eq:p_confluent}
p' = \floor{(M + u + v- N - r + 1)/(r - s)}.
\end{equation}
Then the following identity holds: 
\begin{multline}\label{eq:generalizedBeukersJouhet_confluent}
\sum_{i = 1}^r \frac{(1 + \mathbf{a} - b_i)_{\mathbf{m} - n_i} (1 - b_i + \boldsymbol{\alpha}-n_i)_1 (b_i - 1 + \boldsymbol{\beta})_1}{(\mathbf{b}_{[i]} - b_i)_{\mathbf{n}_{[i]} - n_i + 1} z^{n_i}} 
\times {_{s + v}F_{r + v - 1}}\left(\left.\!\!\begin{array}{c} b_i - \mathbf{a}, b_i + \boldsymbol{\beta} \\ 1 - \mathbf{b}_{[i]}  +  b_i, b_i - 1 + \boldsymbol{\beta} \end{array} \right| (-1)^{r - s} z\!\right)
\\
\times{_{s + u}F_{r + u - 1}}\!\left(\left.\!\!\!\begin{array}{c} 1 + \mathbf{a} - b_i + \mathbf{m} - n_i, 2 - b_i + \boldsymbol{\alpha}-n_i \\ 1 + \mathbf{b}_{[i]} - b_i + \mathbf{n}_{[i]} - n_i, 1 - b_i + \boldsymbol{\alpha}-n_i \end{array} \right| z\!\right)
= \sum_{k = - n_{\max}}^{\max(- m_{\min} - 1, p')} \delta_k z^k,
\end{multline}
where $\delta_k \in \C$ is given by 
\begin{multline*}
\delta_k=
(-1)^{u}\sum\limits_{i=1}^{r}\frac{(b_i+\boldsymbol{\beta}-1)_{1}(b_i-\boldsymbol{\alpha}-k-1)_{1}(1+\a-b_i)_{\m+k}}{(\b_{[i]}-b_i)_{\n_{[i]}+k+1}(k+n_i)!}
\\
\times F\!\left(\!\left.\!\begin{array}{l}-k-n_i,b_i-\a,1-\b_{[i]}+b_{i}-\n_{[i]}-k-1,b_i+\boldsymbol{\beta},b_i-\boldsymbol{\alpha}-k\\b_{i}-\a-\m-k,1-\b_{[i]}+b_{i},b_i+\boldsymbol{\beta}-1,b_i-\boldsymbol{\alpha}-k-1\end{array}\!\right.\right)
\end{multline*}
 for  $k = - n_{\max}, \ldots, \max(- m_{\min} - 1, p')$.
\end{theorem}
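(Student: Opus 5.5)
The proof will repeat the residue argument used for Theorem~\ref{THM:generalizedBeukersJouhet}, with the rational function $f_k$ adapted to the confluent setting. First, I will expand the left-hand side of \eqref{eq:generalizedBeukersJouhet_confluent} by the Cauchy product, exactly as in \eqref{EQ:targetsum2}. The only new feature is the argument $(-1)^{r-s}z$ in the first factor. In the step \eqref{EQ:simplifyfactorial1}, only $s$ numerator Pochhammer products are reflected, giving $(-1)^{sj}$. In \eqref{EQ:simplifyfactorial2}, $r-1$ denominator products are reflected, giving $(-1)^{(r-1)j}$. The extra sign $(-1)^{(r-s)j}$ from the argument compensates, so that $\gamma^{k+n_i}_{i,j}$ again equals formula \eqref{eq:gammafinal}, with $\a,\m$ now of length $s$. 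Hence the left side equals $\sum_{k\ge -n_{\max}}\delta_k z^k$, where $\delta_k=\sum_i\sum_{j=0}^{k+n_i}\gamma^{k+n_i}_{i,j}$. The conversion of $\gamma$ into the terminating series form carried out in Proposition~\ref{prop:beta-final} applies verbatim and yields the stated formula for $\delta_k$.

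It then remains to show that $\delta_k=0$ for every $k\ge -m_{\min}$ with $k>p'$. This gives the upper summation limit $\max(-m_{\min}-1,p')$. For $k\ge -m_{\min}$, I will take
$$
f_k(z)=\frac{(z+\a-k+1)_{\m+k}(z+\aalpha+1)_1(-z+\bbeta+k-1)_1}{(z+\b-k)_{\n+k+1}},
$$
with $\a\in\C^s$. As before, its numerator is a polynomial. Its poles are simple, because the components of $\b$ are distinct modulo integers, and its residues are precisely $\gamma^{k+n_i}_{i,j}$. Consequently $\delta_k$ equals minus the residue at infinity, namely the coefficient $C_{-1}(k)$ in the Laurent expansion of $f_k$ at infinity.

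The key step is a degree count. The numerator of $f_k$ has degree $M+sk+u+v$ in $z$, and the denominator has degree $N+r(k+1)$. So $f_k(z)=O(z^{d})$ with
$$
d=M+u+v-N-r-(r-s)k.
$$
Then $C_{-1}(k)=0$ whenever $d\le -2$, that is, whenever $(r-s)k\ge M+u+v-N-r+2$, i.e. $k>(M+u+v-N-r+1)/(r-s)$. Because $k$ is an integer, this is equivalent to $k\ge p'+1$, with $p'$ as in \eqref{eq:p_confluent}. Therefore $\delta_k=0$ for all $k\ge\max(-m_{\min},p'+1)$, and the series terminates at $\max(-m_{\min}-1,p')$, as claimed.

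Convergence is not a concern, since the confluent series are entire. The main obstacle, in my view, is the sign bookkeeping in the first step: verifying that the factor $(-1)^{r-s}$ in the argument exactly cancels the parity mismatch between the reflected numerator and denominator Pochhammer symbols. The rest is the residue theorem together with an elementary degree estimate. Unlike the non-confluent case, no polynomial-in-$k$ structure (Lemma~\ref{LEM:generalizedBeukersJouhet}) is needed, because $r>s$ makes the degree of $f_k$ decrease linearly in $k$.
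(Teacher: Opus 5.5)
Your proposal is correct and follows the paper's proof essentially step for step. It uses the same Cauchy-product computation, in which the factor $(-1)^{(r-s)j}$ from the argument restores the sign $(-1)^j$ in $\gamma^{k+n_i}_{i,j}$; the same rational function $f_k$, whose residues for $k\ge -m_{\min}$ are exactly these $\gamma$'s; and the same observation that $f_k(z)=O\bigl(z^{M+u+v-N-r-(r-s)k}\bigr)$ at infinity forces $C_{-1}(k)=0$ for $k>p'$, without needing the polynomial-in-$k$ lemma.
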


\begin{proof}
Follow the proof of Theorem~\ref{THM:generalizedBeukersJouhet} up to formula \eqref{EQ:simplifyfactorial1} which now will have the same form except for $(-1)^{rj}$ being  replaced by $(-1)^{sj}$.  Nevertheless, the presence of $(-1)^{r-s}$ in the argument of the first hypergeometric factor in \eqref{eq:generalizedBeukersJouhet_confluent} leads to the same expression for $\gamma^{k+n_i}_{i,j}$ as presented in \eqref{eq:gammafinal}.  Next,  formula \eqref{EQ:asymp2} is replaced by  
\[
f_k(z) = \sum_{j = - \infty}^{M + u + v - N - r - (r - s)k} C_j(k) z^j \quad \text{ as } \quad z \rightarrow \infty.
\]
This implies that $C_{-1}(k) = 0$ if $M + u + v - N - r - (r - s)k < - 1$, {\it i.e.}, when  $(r - s) k> M + u + v - N - r + 1$. 
Let $p' = \floor{(M + u + v- N - r + 1)/(r - s)}$. Then by the above argument and \eqref{eq:S1C-1}, we have
\[
S_1(z) = \sum_{k = - m_{\min}}^\infty C_{-1}(k) z^k =
	\sum_{k = - m_{\min}}^{p'} C_{-1}(k) z^k.
\]
Note that if $p'<-m_{\min}$, then $S_1(z)=0$. Therefore, instead of the calculation in~\eqref{EQ:lastformula} from the proof of Theorem~\ref{THM:generalizedBeukersJouhet}, here we get
\[
S(z) = \sum_{k = - n_{\max}}^{- m_{\min} - 1} \mu_k z^k + S_1(z) := \sum_{k = - n_{\max}}^{\max(- m_{\min} - 1, p')} \delta_k z^k.
\]
The numbers $\delta_k$ were computed in \eqref{eq:delta-defined}.
\end{proof}

As $\delta_k=0$ for $k>\max(-m_{\min}-1,p')$, in view of \eqref{eq:delta-defined} we immediately obtain
\begin{corollary}
Under conditions of Theorem~\ref{THM:generalizedBeukersJouhet2},  for each $k>\max(-m_{\min}-1,p')$ we have
\begin{multline*}
\sum\limits_{i=1}^{r}\frac{(b_i+\boldsymbol{\beta}-1)_{1}(b_i-\boldsymbol{\alpha}-k-1)_{1}(1+\a-b_i)_{\m+k}}{(\b_{[i]}-b_i)_{\n_{[i]}+k+1}(k+n_i)!}
\\
\times F\!\left(\!\left.\!\begin{array}{l}-k-n_i,b_i-\a,1-\b_{[i]}+b_{i}-\n_{[i]}-k-1,b_i+\boldsymbol{\beta},b_i-\boldsymbol{\alpha}-k\\b_{i}-\a-\m-k,1-\b_{[i]}+b_{i},b_i+\boldsymbol{\beta}-1,b_i-\boldsymbol{\alpha}-k-1\end{array}\!\right.\right)=0.
\end{multline*}
\end{corollary}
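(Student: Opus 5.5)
The plan is to compare coefficients of $z^k$ on both sides of \eqref{eq:generalizedBeukersJouhet_confluent} in Theorem~\ref{THM:generalizedBeukersJouhet2}. The left-hand side of that identity is a Laurent series, obtained from the Cauchy product expansion exactly as in \eqref{EQ:targetsum2}. Its coefficient of $z^k$ equals
$$\delta_k=\sum_{i=1}^r\sum_{j=0}^{k+n_i}\gamma_{i,j}^{k+n_i}.$$
Here $\gamma^{k+n_i}_{i,j}$ is given by \eqref{eq:gammafinal}. As noted in the proof of Theorem~\ref{THM:generalizedBeukersJouhet2}, the factor $(-1)^{r-s}$ in the argument of the first hypergeometric factor compensates the sign change from $(-1)^{rj}$ to $(-1)^{sj}$, so \eqref{eq:gammafinal} holds verbatim in the confluent case.

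Next I rewrite $\delta_k$ in hypergeometric form. I apply the manipulations from the proof of Proposition~\ref{prop:beta-final}, namely $(z-j)_n=(z)_n(1-z)_j/(1-z-n)_j$ and $(m-j)!=(-1)^jm!/(-m)_j$, to \eqref{eq:gammafinal}. The inner sum over $j$ then becomes the terminating series displayed in the corollary (it terminates because of the top parameter $-k-n_i$), and the prefactors match. The result is precisely \eqref{eq:delta-defined} with $\a,\m$ now of length $s$, i.e.\ the $\delta_k$ formula stated in Theorem~\ref{THM:generalizedBeukersJouhet2}.

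By the proof of Theorem~\ref{THM:generalizedBeukersJouhet2}, the right-hand side has no terms with $k>\max(-m_{\min}-1,p')$. The reason is that for $k\ge-m_{\min}$ one has $\delta_k=C_{-1}(k)$, the residue at infinity of $f_k$, and this vanishes once the top degree $M+u+v-N-r-(r-s)k$ of $f_k$ is below $-1$. Hence $\delta_k=0$ for every such $k$. Finally I divide by the nonzero constant $(-1)^u$, which yields the claimed identity.

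The only delicate point is checking that the termwise rewriting of $\gamma$ into the ratio-of-Pochhammers form is legitimate. This requires that no denominator such as $(b_i-\a-\m-k)_j$ vanishes. For generic parameters this holds, and the general case follows by continuity in the parameters, since both sides are rational in them. I do not expect any other obstacle.
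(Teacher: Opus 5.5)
Your proposal is correct and follows essentially the same route as the paper, which obtains the corollary in one line. The left-hand-side Laurent coefficients $\delta_k$, written in the hypergeometric form \eqref{eq:delta-defined}, must vanish for $k>\max(-m_{\min}-1,p')$ because $C_{-1}(k)=0$ there, and dividing by $(-1)^u$ gives the claim. Your checks that the sign factors still combine to $(-1)^j$ in the confluent case, and that degenerate parameter values are handled by continuity, are harmless extra care that the paper leaves implicit.
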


\noindent Below we give two examples of the right-hand side of formula \eqref{eq:generalizedBeukersJouhet_confluent} for the choice  $s = 1$ and $r = 3$, $u=v=1$.

\begin{example} \label{EX: diffconfluentex1}
Set $\mathbf{m} = (1)$, $\mathbf{n} = (0,1,1)$, $\boldsymbol{\alpha} = (1/3)$, and $\boldsymbol{\beta} = (1/5)$. Then the right-hand side of~\eqref{eq:generalizedBeukersJouhet_confluent} takes the form $(15 b_2+15 b_3-17)/(15 z)$.
\end{example}

\begin{example} \label{EX: diffconfluentex2}
Set $\mathbf{m} = (2)$, $\mathbf{n} = (3,3,3)$,  $\boldsymbol{\alpha} = (1/7)$, and $\boldsymbol{\beta} = (1/11)$. Then the right-hand side of~\eqref{eq:generalizedBeukersJouhet_confluent} takes the form
\[
\frac{77 a_1^2 + 73 a_1 - 130}{77 (a_1-b_1) (a_1-b_2) (b_3-a_1) z^3}.
\]
\end{example}

\section{The basic case}

 The $q$-gamma function (see \cite[(1.10.1)]{GR04} and~\cite[21.16]{KC02}) is defined by 
\[
\Gamma_q(z) = (1 - q)^{1 - z} \frac{(q; q)_\infty}{(q^z; q)_\infty}
\]
for $|q| < 1$ and each complex $z$ such that $q^{z + n} \neq 1$ for all $n \in \N\cup\{0\}$. Comparing this definition with~\eqref{EQ:qPochhammerdef}, we get
\begin{equation} \label{EQ:qgammaformula}
\frac{\Gamma_q(z + k)}{\Gamma_q(z)} = \frac{(q^z; q)_k}{(1 - q)^k}
\end{equation}
for any integer $k$. We will also need the standard $q$-binomial coefficients defined by
\begin{equation} \label{EQ:Gaussexpansion}
\left[ \begin{matrix} n \\ j \end{matrix} \right]_{q} = \frac{(q; q)_n}{(q; q)_j (q; q)_{n - j}} = \frac{(q^{n - j + 1}; q)_j}{(q; q)_j},
\end{equation}
see, for example,~\cite[p. 24]{GR04} or~\cite[Chapter 7]{KC02}. For any real number $x$, we set $(x)_{+} = \max(0, x)$. We will retain the notation 
$m_{\min} = \min_{1 \leq i \leq r} (m_i)$, $n_{\min} = \min_{1 \leq i \leq r} (n_i)$,  $M = \sum_{i = 1}^r m_i$, $N = \sum_{i = 1}^r n_i$ introduced previously in \eqref{EQ:mn}, \eqref{EQ:indexp}.  In this section we change slightly the definition of $p$ from \eqref{EQ:mn} as follows:
\begin{equation}\label{eq:p-qcase}
p = \max(-1, M  - N+ v - r - t + 1),    
\end{equation}
 where $t$ is an additional integer parameter.

The following lemma was established in the course of the proof of~\cite[Lemma~1]{KKK}.
\begin{lemma} \label{LEM:qBeukersJouhet2}
Consider the function 
\begin{equation} \label{EQ:basicqfactorial2}
g_k(z; \gamma, \eta) = \frac{(\eta z; q)_k}{(\gamma z; q)_k},
\end{equation}
where $k \in \Z$ and $\gamma,\eta\in\C$. Then
\begin{itemize}
\item[(i)] For sufficiently large $z$ we have
\[
g_k(z; \gamma, \eta) = (\eta/\gamma)^k \sum_{\ell \geq 0} z^{-\ell} S_\ell(q^{- k}),
\]
where $S_\ell(w)$ is a polynomial of degree $\ell$ whose coefficients do not depend on $k$.
\item[(ii)] For sufficiently small $z$ we have 
\[
g_k(z; \gamma, \eta) = \sum_{\ell \geq 0} z^\ell T_\ell(q^k),
\]
where $T_\ell(w)$ is a polynomial of degree $\ell$ whose coefficients do not depend on $k$.
\end{itemize}
\end{lemma}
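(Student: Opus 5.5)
The plan is to prove both parts by writing $g_k$ as an explicit finite product and expanding factor by factor. For $k\ge0$ we have $g_k(z;\gamma,\eta)=\prod_{j=0}^{k-1}\frac{1-\eta q^j z}{1-\gamma q^j z}$. For $k<0$, \eqref{EQ:qPochhammerdef} gives $g_k=\prod_{j=1}^{|k|}\frac{1-\gamma q^{-j}z}{1-\eta q^{-j}z}$. Rather than treat these two products separately, I would take logarithms, which produces a uniform formula valid for every integer $k$.

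For part (ii), for $|z|$ small, expand
\[
\log g_k=\sum_{j=0}^{k-1}\bigl[\log(1-\eta q^j z)-\log(1-\gamma q^jz)\bigr]=\sum_{s\ge1}\frac{(\gamma^s-\eta^s)z^s}{s}\sum_{j=0}^{k-1}q^{js}.
\]
The inner sum equals $(1-q^{ks})/(1-q^s)$. This is a polynomial of degree $s$ in $w=q^k$, and the same closed form remains valid for negative $k$, which one checks directly from the reversed product. So $\log g_k=\sum_{s\ge1}z^s P_s(q^k)$ with $\deg P_s\le s$. Exponentiating with the same formula for the exponential of a power series used in Lemma~\ref{LEM:generalizedBeukersJouhet} (\cite[Lemma~1]{KPJMS2018}), the coefficient of $z^\ell$ is a sum of products $P_{s_1}\cdots P_{s_t}$ with $s_1+\dots+s_t=\ell$. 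It is therefore a polynomial $T_\ell(q^k)$ of degree at most $\ell$, with coefficients independent of $k$. For the claim that the degree is exactly $\ell$, look at the top coefficient, which comes from $(1-q^{ks})$ and gives $-q^{ks}$. The $w^\ell$ coefficient is then the $z^\ell$ coefficient of $\exp\bigl(-\sum_s (\gamma^s-\eta^s)z^s w^s/(s(1-q^s))\bigr)$, that is, of $(\eta wz;q)_\infty/(\gamma wz;q)_\infty$ after using the $q$-binomial theorem. That coefficient is $(\eta/\gamma;q)_\ell\gamma^\ell/(q;q)_\ell$, which is generically nonzero. (The lemma should be read as ``degree at most $\ell$'' in degenerate cases such as $\eta=\gamma$.)

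For part (i), for $|z|$ large, factor each term as $1-\eta q^jz=-\eta q^jz(1-q^{-j}/(\eta z))$, and do the same for $\gamma$. The prefactors combine to $(\eta/\gamma)^k$. What remains is $g_k(1/z;\eta^{-1},\gamma^{-1})$ evaluated with the reciprocal base, i.e. with $q^{-j}$ in place of $q^j$. Repeating the logarithmic computation in the variable $1/z$ gives inner sums $\sum_{j=0}^{k-1}q^{-js}=(1-q^{-ks})/(1-q^{-s})$, which are polynomials of degree $s$ in $q^{-k}$. The same exponentiation argument then yields $S_\ell(q^{-k})$ with degree at most $\ell$, and generically exactly $\ell$.

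The only delicate point is confirming that the geometric-sum closed form, and hence the polynomial dependence on $q^{\pm k}$, holds uniformly for negative $k$. That is a one-line check: the product over $j=1,\dots,|k|$ of the reciprocal factors gives $-\sum_{j=1}^{|k|}q^{-js}=(1-q^{ks})/(1-q^s)$ with $k<0$. The other thing to state explicitly is the region of convergence, namely $|z|$ smaller than, respectively larger than, all the finitely many poles and zeros, which suffices for each fixed $k$.
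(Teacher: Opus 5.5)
Your proof is correct. The paper does not actually prove this lemma. It cites the proof of Lemma~1 in \cite{KKK}, which is written for $k\ge1$, and adds a remark asserting that the same argument works for $k<0$ and trivially for $k=0$. Your logarithm-and-exponentiate argument is the same device the paper uses in Lemma~\ref{LEM:generalizedBeukersJouhet} and later for the coefficients $\hat q_s$, $p_s$. It is self-contained and supplies exactly what that remark leaves unchecked. Since $\sum_{j=0}^{k-1}q^{js}$ (for $k\ge0$) and $-\sum_{j=1}^{|k|}q^{-js}$ (for $k<0$) have the same closed form $(1-q^{ks})/(1-q^s)$, you get $\log g_k=\sum_{s\ge1}z^s(\gamma^s-\eta^s)(1-w^s)/(s(1-q^s))$ with $w=q^k$ uniformly for all $k\in\Z$. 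The same holds in $1/z$ and $q^{-k}$ for part (i).

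An even shorter route to the same uniformity is the factorization $g_k(z)=\frac{(\eta z;q)_\infty(\gamma q^kz;q)_\infty}{(\gamma z;q)_\infty(\eta q^kz;q)_\infty}$. It is valid for every $k\in\Z$ under the paper's convention for negative indices. Combined with the $q$-binomial theorem, it gives $T_\ell(w)=\sum_{n+m=\ell}\frac{(\eta/\gamma;q)_n(\gamma/\eta;q)_m}{(q;q)_n(q;q)_m}\gamma^n\eta^m w^m$ explicitly.

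There are a few slips, none of which affects the conclusion.

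First, the top-degree generating function $\exp(-\sum_s(\gamma^s-\eta^s)(wz)^s/(s(1-q^s)))$ equals $(\gamma wz;q)_\infty/(\eta wz;q)_\infty$, not its reciprocal. So the $w^\ell$-coefficient of $T_\ell$ is $(\gamma/\eta;q)_\ell\,\eta^\ell/(q;q)_\ell$. You can check $\ell=1$ directly: $T_1(w)=(\gamma-\eta)(1-w)/(1-q)$ has leading coefficient $(\eta-\gamma)/(1-q)$, opposite in sign to your formula.

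Second, in part (i) the leftover product is $g_k(1/z;\gamma^{-1},\eta^{-1})$ in base $q^{-1}$; you swapped the two parameters.

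Third, the degenerate cases go beyond $\eta=\gamma$. The degree of $T_\ell$ drops whenever $\gamma/\eta\in\{1,q^{-1},\dots,q^{1-\ell}\}$. For instance, $\gamma=\eta/q$ gives $g_k=(1-\eta q^{k-1}z)/(1-\eta z/q)$ for all $k$, and then every $T_\ell$ is linear in $w$. So ``degree at most $\ell$'' is the right reading of the lemma, and it is all the paper subsequently uses.

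Finally, part (i) tacitly requires $\gamma\eta\neq0$, which is implicit in the factor $(\eta/\gamma)^k$.
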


\begin{remark}
The proof of \cite[Lemma~1]{KKK} is written for $k\ge1$, but careful examination shows that  essentially the same proof works for negative $k$.  The trivial case $k=0$ is also  included since $S_{\ell}(1)=T_{\ell}(1)=0$ for $\ell\ge1$. 
\end{remark}

\begin{corollary} \label{COR:qBeukersJouhet2}
Suppose $\mathbf{a},\mathbf{b} \in \C^r$, $\mathbf{m}, \mathbf{n} \in \Z^r$, $t\in\Z$, $\boldsymbol{\alpha} \in \C^u$ and $\boldsymbol{\beta} \in \C^v$.  Consider the function
    \begin{equation} \label{EQ:targetfunction2}
    F_k(z) = - z^{- t} \frac{( z q^{1 - \mathbf{b}}; q)_{k + \mathbf{m}} (z q^{1 + \boldsymbol{\beta} + k}; q)_1 (z^{-1} q^{\boldsymbol{\alpha} - 1}; q)_1}{(z q^{- \mathbf{a}}; q)_{k + \mathbf{n}+1}},
    \end{equation}
    where $k \in \mathbb{Z}$. Then
    
    \begin{enumerate}
        \item[(i)] For sufficiently large $z$ we have
        
        \begin{equation} \label{EQ:desiredresult3}
        F_k(z) = (q^vB)^k z^{M + v - N - r - t} \sum_{\ell \geq 0} Q_\ell(q^{- k})z^{-\ell}, 
        \end{equation}
        where \( B = \prod_{i=1}^r q^{a_i - b_i + m_i - n_i} \) and for each $\ell \geq 0$ the function $Q_\ell(w)$ is a polynomial of degree $\ell$ whose coefficients do not depend on $k$.
        
        \item[(ii)] For sufficiently small $z$ we have
        \begin{equation} \label{EQ:desiredresult4}
        F_k(z) = z^{-u - t}\sum_{\ell \geq 0} P_\ell(q^k)z^{\ell},
        \end{equation}
      where  for each $\ell \geq 0$ the function $P_\ell(w)$ is a polynomial of degree \( \ell \) whose coefficients do not depend on \( k \).
    \end{enumerate}
\end{corollary}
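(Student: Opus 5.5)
Write $F_k(z)$ as a product of factors of the form $g_{k'}(z;\gamma,\eta)$ from Lemma~\ref{LEM:qBeukersJouhet2}, together with explicitly handled monomial and linear factors. Then multiply the resulting expansions.

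\textbf{Pairing the Pochhammer ratios.} For each $i$, pair $(zq^{1-b_i};q)_{k+m_i}$ with $(zq^{-a_i};q)_{k+n_i+1}$. Using $(x;q)_{k+m}=(x;q)_k(xq^k;q)_m$, we get
$$
\frac{(zq^{1-b_i};q)_{k+m_i}}{(zq^{-a_i};q)_{k+n_i+1}}
=g_k(z;q^{-a_i},q^{1-b_i})\,\frac{(zq^{1-b_i+k};q)_{m_i}}{(zq^{-a_i+k};q)_{n_i+1}} .
$$
The first factor is covered by Lemma~\ref{LEM:qBeukersJouhet2}. The second factor has fixed length in $k$. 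After the substitution $w=q^{k}$ (small $z$) or $w=q^{-k}$ (large $z$), it becomes a rational function of $z$ built from linear factors $(1-czw)$. The remaining factors are the $v$ factors $(1-zq^{1+\beta_\ell+k})$, the $u$ factors $(1-z^{-1}q^{\alpha_\ell-1})$, and $-z^{-t}$.

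\textbf{Small $z$, part (ii).} Here $g_k=\sum_\ell T_\ell(q^k)z^\ell$ with $\deg T_\ell=\ell$. A factor $1-czq^k$ or its inverse $\sum_\ell c^\ell q^{k\ell}z^\ell$ contributes degree $\ell$ in $q^k$ at $z^\ell$. The $\beta$-factors are of the same type. Each $\alpha$-factor equals $-z^{-1}q^{\alpha_\ell-1}(1-zq^{1-\alpha_\ell})$, contributing $z^{-1}$ times a series independent of $k$. In total this gives $z^{-u-t}$ times a product of series whose $z^\ell$-coefficient is a polynomial in $q^k$ of degree at most $\ell$. Degrees add under the Cauchy product, which gives \eqref{EQ:desiredresult4}.

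\textbf{Large $z$, part (i).} Part (i) of the Lemma gives $g_k=(q^{1-b_i+a_i})^k\sum_\ell S_\ell(q^{-k})z^{-\ell}$. For a finite factor with $x=zq^{c+k}$ large, write
$$
(x;q)_m=\prod_{j}(-x q^{j})\,(1-x^{-1}q^{-j}).
$$
This yields $z^{m}$ times a $k$-power $q^{km}$, times a constant, times a series in $z^{-1}q^{-k}$ whose $z^{-\ell}$-coefficient is of degree at most $\ell$ in $q^{-k}$. The same holds for inverse factors, and for negative $m$ via \eqref{EQ:qPochhammerdef}. Collecting the $k$-th powers gives $\prod_i q^{k(a_i-b_i+1+m_i-n_i-1)}$ together with $q^{k}$ from each $\beta$-factor, i.e.\ $(q^vB)^k$. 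The bookkeeping of the ``$+1$'' shifts needs care here; the constant $q^{k}$ per $i$ arising from $1-b_i$ cancels against the one from the extra $+1$ in $n_i+1$. The $z$-powers total $M-N-r+v-t$, and the $\alpha$-factors are $1+O(z^{-1})$ with constant coefficients. Multiplying the series gives \eqref{EQ:desiredresult3}.

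\textbf{Main obstacle.} The delicate point is exact tracking of the $k$-dependent power prefactors in the large-$z$ case, including negative-length Pochhammer symbols. It also requires checking that the degree in $q^{-k}$ is exactly $\ell$, rather than at most $\ell$, as stated. I would first verify the bound of at most $\ell$, since this is what the later residue/summation argument uses. I would then note that the leading coefficient is a nonzero combination inherited from $S_\ell$ and $T_\ell$ for generic parameters.
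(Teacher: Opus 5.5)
Your proof is correct and follows the same strategy as the paper: factor $F_k$ into the ratios $g_k$ of Lemma~\ref{LEM:qBeukersJouhet2} times explicit factors, then multiply the expansions. The paper splits the other way, $(x;q)_{k+m}=(x;q)_m(xq^m;q)_k$, and writes $F_k=z^{-t}R(z)\prod_i g_k(z;q^{-a_i+n_i+1},q^{-b_i+m_i+1})\prod_j g_k(z;q^{1+\beta_j},q^{2+\beta_j})$ with $R$ independent of $k$, so $(q^vB)^k$ comes straight from the factors $(\eta/\gamma)^k$ and your manual tracking of $k$-powers is unnecessary; like yours, that argument only gives degree at most $\ell$, which is all that can hold in general (for $b_i=a_i+1$, $m_i=n_i+1$ and $v=0$, $F_k$ does not depend on $k$ at all).
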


\begin{proof}
Set
\[
\tilde{f}_k(z) = - \frac{( z q^{1 - \mathbf{b}}; q)_{k + \mathbf{m}} (z q^{1 + \boldsymbol{\beta} + k}; q)_1 (z^{-1} q^{\boldsymbol{\alpha} - 1}; q)_1}{(z q^{- \mathbf{a}}; q)_{k + \mathbf{n}+1}}.
\]
Then 
\begin{equation} \label{EQ:decomposion2}
F_k(z) = z^{-t} \tilde{f}_k(z).
\end{equation}
In terms of $g_k$ defined in~\eqref{EQ:basicqfactorial2} we can write
\begin{equation} \label{EQ:prodqfactorial2}
\tilde{f}_k(z) = R(z) \cdot \prod_{i = 1}^r g_k(z; q^{-a_i + n_i + 1}, q^{- b_i + m_i + 1})\prod_{j=1}^{v} g_k(z; q^{1+\beta_j}, q^{2+\beta_j}),
\end{equation}
where
\[
R(z) = - \frac{( z q^{1 - \mathbf{b}}; q)_{\mathbf{m}} (z q^{1 + \boldsymbol{\beta}}; q)_1 (z^{-1} q^{\boldsymbol{\alpha} - 1}; q)_1}{(z q^{- \mathbf{a}}; q)_{\mathbf{n} + 1}}.
\]
Note that the above factorization is true for all integer $k$, which can be verified using \eqref{EQ:qPochhammerdef}. From this expression we conclude that the function $R$ has the series expansion (for all sufficiently large $z$)
\begin{equation} \label{EQ:rseriesinfity2}
R(z) = z^{M + v - N - r} \sum_{\ell \geq 0} c_\ell z^{-\ell},
\end{equation}
where $c_\ell \in \C$ is certain coefficient independent of $k$, $c_0\ne0$. Using item (i) of Lemma~\ref{LEM:qBeukersJouhet2}, we find that for large $z$
\begin{equation} \label{EQ:prodseriesinfty2}
\prod_{i = 1}^r g_k(z; q^{-a_i + n_i + 1}, q^{- b_i + m_i + 1}) = B^k \sum_{\ell \geq 0} z^{-\ell} \tilde{Q}_\ell(q^{-k}),
\end{equation}
where \( B = \prod_{i=1}^r q^{a_i - b_i + m_i - n_i} \) and $\tilde{Q}_\ell$  are certain polynomials of degree $\ell$.
Similarly,
$$
\prod_{j=1}^{v} g_k(z; q^{1+\beta_j}, q^{2+\beta_j})=q^{vk}\sum_{\ell\ge0}z^{-\ell}\hat{Q}_{\ell}(q^{-k}).
$$
Combining~\eqref{EQ:decomposion2},~\eqref{EQ:prodqfactorial2},~\eqref{EQ:rseriesinfity2}, and~\eqref{EQ:prodseriesinfty2}, we 
arrive at the desired result~\eqref{EQ:desiredresult3}.

For small $z$, it is clear that $R$ has the series expansion
\begin{equation}   \label{EQ:rseriesinfity3}
R(z) = z^{-u} \sum_{\ell \geq 0} \tilde{c}_\ell z^\ell
\end{equation}
for some coefficients $\tilde{c}_\ell \in \C$. Using  item (ii) of Lemma~\ref{LEM:qBeukersJouhet2}, we have (for all $z$ small enough)
\begin{subequations}\label{EQ:prodseriesinfty3}
\begin{equation} 
\prod_{i = 1}^r g_k(z; q^{-a_i + n_i + 1}, q^{- b_i + m_i + 1}) = \sum_{\ell \geq 0} z^\ell \bar{P}_\ell(q^k),
\end{equation}
and 
\begin{equation} 
\prod_{j=1}^{v} g_k(z; q^{1+\beta_j}, q^{2+\beta_j})=\sum_{\ell\ge0}z^{\ell}\hat{P}_{\ell}(q^{k})
\end{equation}
\end{subequations}
where each function $\bar{P}_\ell$ and $\hat{P}_\ell$ is a polynomial of degree $\ell$. Combining~\eqref{EQ:decomposion2},~\eqref{EQ:prodqfactorial2},~\eqref{EQ:rseriesinfity3}, and~\eqref{EQ:prodseriesinfty3}, we 
arrive at the desired expansion~\eqref{EQ:desiredresult4}.
\end{proof}

Our main result is the following  

\begin{theorem} \label{THM: qBeukersJouhet2}
Suppose that $r \geq 2$, $u,v\ge0$ are integers, $q$ with $0 < |q| < 1$ is a complex number, 
the vector $\mathbf{a} \in \C^r$ satisfies $a_i - a_j \not\in \Z$ for $1 \leq i < j \leq r$, the vectors $\mathbf{b} \in \C^r$, and $\boldsymbol{\alpha} \in \C^u$, $\boldsymbol{\beta} \in \C^v$ 
are arbitrary. Assume further that $\mathbf{m}, \mathbf{n} \in \Z^r$ and $t \in \Z$.  
Let $W = q^{t + r -1} \prod_{i = 1}^r q^{a_i - b_i}$, $p$ be defined by \eqref{eq:p-qcase} and $m_{\min}$, $n_{\max}$ retain their meaning from \eqref{EQ:mn}. 
Then the following identity holds
\begin{align} \label{EQ:generalizedqBeukersJouhet}
& \sum_{i = 1}^r q^{a_i (1 - t)} \frac{(q^{1 - \mathbf{b} + a_i}; q)_{\mathbf{m} - n_i} (q^{- a_i - 1 + \boldsymbol{\alpha}}; q)_1 (q^{1 + a_i + \boldsymbol{\beta}-n_i}; q)_1 z^{-n_i}}{(q^{a_i - \mathbf{a}_{[i]}}; q)_{\mathbf{n}_{[i]} - n_i + 1}} {_{r + u}\phi_{r + u - 1}}\left(\left.\!\!\begin{array}{c} q^{\mathbf{b} - a_i}, q^{- a_i + \boldsymbol{\alpha}} \\ q^{1 + \mathbf{a}_{[i]} - a_i}, q^{- a_i - 1 + \boldsymbol{\alpha}} \end{array} \right| W z\!\right) \nonumber \\
& \times {_{r+v}\phi_{r+v-1}} \left(\left.\!\!\begin{array}{c} q^{1 - \mathbf{b} + a_i + \mathbf{m} - n_i}, q^{2 + a_i + \boldsymbol{\beta}-n_i} \\ q^{1 - \mathbf{a}_{[i]} + a_i + \mathbf{n}_{[i]} - n_i}, q^{1 + a_i + \boldsymbol{\beta}-n_i} \end{array} \right| z\!\right)  = \frac{1}{(Wz; q)_{p+1}(z; q)_{(u+t)_{+}}} \sum_{k=-n_{\max}}^{p+(u+t)_{+}-m_{\min}}\lambda_k z^k,
\end{align}
\noindent
 where the number $\lambda_k$ is given by
\begin{align} \label{EQ:explicitformulalambda}
\lambda_k = & \sum_{j = \max(- n_{\max}, k - p - (u + t)_{+} -1)}^k (-1)^{k - j} \nonumber \\
& \times \sum_{g + h = k - j} \left[ \begin{matrix} p + 1 \\  h \end{matrix} \right]_{q} \left[ \begin{matrix} (u + t)_{+} \\  g \end{matrix} \right]_{q} q^{(h (h - 1) + g (g -1))/2} W^h \nonumber \\
& \times \bigg\{ \sum_{i = 1}^r \frac{q^{a_i(1 - t)} (q^{1 + a_i + \boldsymbol{\beta} + j}; q)_1 (q^{\boldsymbol{\alpha}-a_i-1}; q)_1 (q^{1 - \mathbf{b} + a_i} ; q)_{\mathbf{m} + j}}{(q; q)_{j + n_i} (q^{a_i - \mathbf{a}_{[i]}}; q)_{\mathbf{n}_{[i]} + j + 1}} 
\nonumber \\ 
&  {_{2 r + u + v}\phi_{2 r + u + v - 1}}\left(\left.\!\!\begin{array}{c} q^{- j - n_i}, q^{\mathbf{b} - a_i}, q^{\mathbf{a}_{[i]} - a_i - \mathbf{n}_{[i]} - j}, q^{-a_i-\boldsymbol{\beta}-j}, q^{\boldsymbol{\alpha}-a_i} \\ q^{\mathbf{b} - a_i - \mathbf{m} - j}, q^{1 - a_i + \mathbf{a}_{[i]}}, q^{-1 - a_i - \boldsymbol{\beta} -  j }, q^{\boldsymbol{\alpha}- a_i-1} \end{array} \right| q^{N - M + r -v + t-1} \!\right)  \bigg\}.
\end{align}
\end{theorem}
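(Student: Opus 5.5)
We follow the scheme of the proof of Theorem~\ref{THM:generalizedBeukersJouhet}, with the function $F_k(z)$ of Corollary~\ref{COR:qBeukersJouhet2} in place of $f_k$. First we expand both basic hypergeometric factors on the left of \eqref{EQ:generalizedqBeukersJouhet} and form the Cauchy product. This writes the left-hand side as $S(z)=\sum_{k\ge -n_{\max}}\delta_k z^k$ with $\delta_k=\sum_{i=1}^r\sum_{j=0}^{k+n_i}\gamma^{k+n_i}_{i,j}$. The $q$-analogues of \eqref{EQ:simplifyfactorial1}--\eqref{EQ:simplifyfactorial2}, namely $(x;q)_j=(-x)^jq^{j(j-1)/2}(q^{1-j}/x;q)_j$ and the corresponding rules for products with shifted arguments, combine the three Pochhammer products in the numerator into a single $(q^{1-\mathbf b+a_i-j};q)_{\mathbf m+k}$. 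They likewise combine the denominator into $(q^{a_i-\mathbf a_{[i]}-j};q)_{\mathbf n_{[i]}+k+1}$. The factors $W^j$ and the quadratic powers of $q$ produced by these manipulations are meant to cancel exactly because $W=q^{t+r-1}\prod q^{a_i-b_i}$. The factor $q^{a_i(1-t)}$ together with the $\boldsymbol\alpha,\boldsymbol\beta$ factors should then give the residue pattern of $F_k$.

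Next, we identify $\gamma^{k+n_i}_{i,j}$ with the residue of $F_k(z)/z$ at the simple pole $z=q^{a_i-j}$, $0\le j\le k+n_i$. These poles are the zeros of $(zq^{-a_i};q)_{k+n_i+1}$, and they are simple because $a_i-a_j\notin\Z$. This step has to be checked carefully; we would use $(q;q)_j(q;q)_{k+n_i-j}$ from the nearby factors and keep track of the factor $(-1)^jq^{\binom{j}{2}}$. For $k\ge -m_{\min}$ the numerator is a polynomial in $z$. The remaining poles of $F_k(z)/z$ are at $0$ and $\infty$. The residue theorem then gives
\[
\delta_k=-\res_{z=0}\frac{F_k(z)}{z}-\res_{z=\infty}\frac{F_k(z)}{z}.
\]
By Corollary~\ref{COR:qBeukersJouhet2}(ii) the residue at $0$ is $P_{u+t}(q^k)$, a polynomial in $q^k$ of degree $u+t$, which vanishes if $u+t<0$. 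By part (i) the residue at $\infty$ is $(q^vB)^kQ_{M+v-N-r-t}(q^{-k})$, which equals $W^k$ times a polynomial in $q^k$ of degree at most $p$. Note that $q^vB\,q^{-(M+v-N-r-t)}$ differs from $W$ by $q^{\text{const}\cdot k}$ adjustments that we must verify match the claimed $W$; this bookkeeping is the main obstacle.

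Summing then yields rational functions. $\sum_k q^{ks}z^k$ gives denominators $(1-q^sz)$ for $s=0,\dots,(u+t)_+-1$, which produces $(z;q)_{(u+t)_+}$. Similarly $\sum_k W^kq^{ks}z^k$ gives $(Wz;q)_{p+1}$. The finitely many terms with $k<-m_{\min}$ are absorbed into the numerator, exactly as in \eqref{EQ:lastformula}, so the numerator ranges over $-n_{\max}\le k\le p+(u+t)_+-m_{\min}$. For the explicit $\lambda_k$ in \eqref{EQ:explicitformulalambda} we follow Proposition~\ref{prop:beta-final}. We multiply by $(Wz;q)_{p+1}(z;q)_{(u+t)_+}$, expand each factor by the $q$-binomial theorem, $(x;q)_n=\sum_h\left[\begin{smallmatrix}n\\h\end{smallmatrix}\right]_q(-1)^hq^{h(h-1)/2}x^h$, and convolve with $\delta_j$. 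We write $\delta_j$ as a terminating ${}_{2r+u+v}\phi_{2r+u+v-1}$ with argument $q^{N-M+r-v+t-1}$, using $(xq^{-j};q)_n=(x;q)_n(q/x;q)_j/(q^{1-n}/x;q)_j\cdot q^{-nj}$ and $(q;q)_{m-j}=(q;q)_m(-1)^jq^{\binom j2-mj}/(q^{-m};q)_j$. Collecting the powers of $q$ from these conversions should produce precisely that argument.
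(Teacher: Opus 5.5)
You follow the paper's route: Cauchy product, $\gamma^{k+n_i}_{i,j}$ as residues of the rational function of Corollary~\ref{COR:qBeukersJouhet2}, the residue theorem with contributions at $0$ and $\infty$ read off from that corollary, geometric summation, and then $q$-binomial expansion plus convolution for $\lambda_k$. However, your residue kernel is off by one power of $z$, and this causes the bookkeeping obstacle you could not resolve.

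At $z_0=q^{a_i-j}$ the simple pole comes from the factor $1-zq^{j-a_i}$ of $(zq^{-a_i};q)_{k+n_i+1}$. Its reciprocal has residue $-q^{a_i-j}$. The other factors give $\prod_{l\ne j}(1-q^{l-j})^{-1}=(-1)^jq^{j(j+1)/2}/\bigl((q;q)_j(q;q)_{k+n_i-j}\bigr)$, and $z_0^{-t}=q^{-ta_i+tj}$. Together with the leading minus sign of $F_k$ this yields exactly the factor $(-1)^jq^{a_i(1-t)+tj+\binom{j}{2}}$ appearing in \eqref{EQ:defgamma3}. That factor is what survives after $W^j$ absorbs the terms linear in $\mathbf a,\mathbf b$, so the powers do not ``cancel exactly''.

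Hence $\res_{z=q^{a_i-j}}F_k(z)=\gamma^{k+n_i}_{i,j}$, whereas $\res_{z=q^{a_i-j}}F_k(z)/z=q^{j-a_i}\gamma^{k+n_i}_{i,j}$, and the latter do not sum to $\delta_k$. Since $F_k(z)/z$ is just $F_k$ with $t$ replaced by $t+1$, your set-up computes the left-hand side of the identity for $t+1$ (argument $qWz$, prefactor $q^{-a_it}$), not the stated one.

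The shift then shows up at both ends:
\begin{itemize}
\item With $F_k(z)/z$, the residue at $0$ is $P_{u+t}(q^k)$. This is a nonzero constant already when $u+t=0$, and it would give the denominator $(z;q)_{(u+t+1)_+}$.
\item The coefficient of $z^{-1}$ at infinity is $(q^vB)^kQ_{p-1}(q^{-k})$, since $M+v-N-r-t=p-1$; here $B=\prod_i q^{a_i-b_i+m_i-n_i}$ as in the corollary. Its geometric sums give $(qWz;q)_p$. This is precisely the mismatch $q^vB\,q^{-(M+v-N-r-t)}=qW$ that you noticed.
\end{itemize}
So the denominators $(z;q)_{(u+t)_+}$ and $(Wz;q)_{p+1}$ you go on to write do not follow from your own computation.

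The fix is to take residues of $F_k(z)$ itself, so that $\delta_k=C_{-1}(k)-\res_{z=0}F_k(z)$ for $k\ge-m_{\min}$. Then:
\begin{itemize}
\item $\res_{z=0}F_k(z)=P_{u+t-1}(q^k)$ has degree $u+t-1$ and vanishes when $u+t\le0$. This gives the ratios $1,q,\dots,q^{u+t-1}$ and hence $(z;q)_{(u+t)_+}$.
\item $C_{-1}(k)=(q^vB)^kQ_p(q^{-k})=W^k\,q^{pk}Q_p(q^{-k})$, because $q^{v-p}B=q^{\sum_i a_i-\sum_i b_i+r+t-1}=W$ whenever $p\ge0$. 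This gives the ratios $W,Wq,\dots,Wq^p$ and hence $(Wz;q)_{p+1}$.
\end{itemize}
With that correction no obstacle remains. The rest of your plan, including the conversion of $\delta_j$ into the terminating ${}_{2r+u+v}\phi_{2r+u+v-1}$ at argument $q^{N-M+r-v+t-1}$, coincides with the paper's proof.
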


\noindent
\textit{Proof of Theorem~\ref{THM: qBeukersJouhet2}}. By the Cauchy product,  we have 
\begin{align} \label{EQ:qtargetsum2}
S(z) & := \sum_{i = 1}^r q^{a_i (1 - t)} \frac{(q^{1 - \mathbf{b} + a_i}; q)_{\mathbf{m} - n_i} (q^{- a_i - 1 + \boldsymbol{\alpha}}; q)_1 (q^{1 + a_i + \boldsymbol{\beta}-n_i}; q)_1 z^{-n_i}}{(q^{a_i - \mathbf{a}_{[i]}}; q)_{\mathbf{n}_{[i]} - n_i + 1}} \nonumber \\
& \times {_{r + u}\phi_{r + u - 1}}\left(\left.\!\!\begin{array}{c} q^{\mathbf{b} - a_i}, q^{- a_i + \boldsymbol{\alpha}} \\ q^{1 + \mathbf{a}_{[i]} - a_i}, q^{- a_i - 1 + \boldsymbol{\alpha}} \end{array} \right| W z\!\right) \times {_{r + v}\phi_{r + v - 1}}\left(\left.\!\!\begin{array}{c} q^{1 - \mathbf{b} + a_i + \mathbf{m} - n_i}, q^{2 + a_i + \boldsymbol{\beta}-n_i} \\ q^{1 - \mathbf{a}_{[i]} + a_i + \mathbf{n}_{[i]} - n_i}, q^{1 + a_i + \boldsymbol{\beta}-n_i} \end{array} \right| z\!\right) \nonumber  \\
& = \sum_{i = 1}^r \sum_{k = 0}^\infty z^{k - n_i} \nonumber \\
&  \times \sum_{j = 0}^k \frac{ (q^{1 - \mathbf{b} + a_i}; q)_{\mathbf{m} - n_i} (q^{\mathbf{b} - a_i}; q)_j (q^{1 - \mathbf{b} + a_i + \mathbf{m} - n_i}; q)_{k - j} (q^{- a_i + \boldsymbol{\alpha} + j - 1}; q)_1 (q^{1 + a_i + \boldsymbol{\beta}-n_i+ k - j}; q)_1 W^j}{q^{a_i (t-1)}(q^{a_i - \mathbf{a}_{[i]}}; q)_{\mathbf{n}_{[i]} - n_i + 1}(q^{1 + \mathbf{a}_{[i]} - a_i}; q)_j (q^{1 - \mathbf{a}_{[i]} + a_i + \mathbf{n}_{[i]} - n_i}; q)_{k - j} (q; q)_j (q; q)_{k - j}} \nonumber \\
& = \sum_{i = 1}^r \sum_{k = 0}^\infty z^{k - n_i} \sum_{j = 0}^k  \gamma_{i, j}^k = \sum_{i = 1}^r \sum_{k_i = - n_i}^\infty z^{k_i} \sum_{j = 0}^{k_i + n_i} \gamma_{i, j}^{k_i + n_i}, 
\end{align}
where 
\begin{equation} \label{EQ:defgamma2}
\gamma_{i, j}^k =  \frac{q^{a_i (1 - t)} (q^{1 - \mathbf{b} + a_i}; q)_{\mathbf{m} - n_i} (q^{\mathbf{b} - a_i}; q)_j (q^{1 - \mathbf{b} + a_i + \mathbf{m} - n_i}; q)_{k - j} (q^{- a_i + \boldsymbol{\alpha} + j - 1}; q)_1 (q^{1 + a_i + \boldsymbol{\beta}-n_i + k - j}; q)_1 W^j}{(q^{a_i - \mathbf{a}_{[i]}}; q)_{ \mathbf{n}_{[i]} - n_i + 1}(q^{1 + \mathbf{a}_{[i]} - a_i}; q)_j (q^{1 - \mathbf{a}_{[i]} + a_i + \mathbf{n}_{[i]} - n_i}; q)_{k - j} (q; q)_j (q; q)_{k - j}}.
\end{equation}
Using \eqref{EQ:qgammaformula} and
\begin{equation} \label{EQ:qPochhammerformula}
(q^a; q)_n = (-1)^n q^{an + n(n-1)/2} (q^{1 - a - n};q)_n
\end{equation}
 we can simplify the $q$-Pochhammer symbols in the denominator on the right-hand side of~\eqref{EQ:defgamma2} as follows: 
\begin{align} \label{EQ:simplifyqPochhamer}
& \quad \ (q^{a_i - a_{\ell}}; q)_{n_{\ell} - n_i + 1}(q^{1 + a_{\ell} - a_i}; q)_j (q^{1 - a_{\ell} + a_i + n_{\ell} - n_i}; q)_{k - j} \nonumber \\
& = (q^{1 + a_\ell - a_i}; q)_j (1 - q)^{n_\ell - n_i + 1 + k - j} \frac{\Gamma_q(a_i - a_\ell + n_\ell - n_i + 1)}{\Gamma_q(a_i - a_\ell)} \frac{\Gamma_q(1 - a_\ell + a_i + n_\ell - n_i + k - j)}{\Gamma_q(1 - a_\ell + a_i + n_\ell - n_i)}  \nonumber\\
& =  (q^{1 + a_\ell - a_i}; q)_j (q^{a_i - a_\ell}; q)_{n_\ell - n_i + 1 + k - j}   \nonumber \\[6pt]
& = (-1)^j q^{(1 + a_\ell - a_i)j + j (j - 1)/2} (q^{- a_\ell + a_i - j}; q)_j (q^{a_i - a_\ell}; q)_{n_\ell - n_i + 1 + k - j}  \nonumber \\[6pt]
& =  (-1)^j q^{(1 + a_\ell - a_i)j + j (j - 1)/2} (q^{a_i - a_\ell - j}; q)_{n_\ell - n_i + 1 + k}.
\end{align}
Using~\eqref{EQ:simplifyqPochhamer} and doing a similar calculation for the numerator in~\eqref{EQ:defgamma2}, we get
\[
\gamma_{i, j}^k = \frac{(-1)^j q^{j (j - 1)/2 + a_i (1 - t) + t j} (q^{1 - \mathbf{b} + a_i - j}; q)_{\mathbf{m} + k - n_i} (q^{- a_i + \boldsymbol{\alpha} + j - 1}; q)_1 (q^{1 + a_i + \boldsymbol{\beta}-n_i + k - j}; q)_1}{(q^{a_i - \mathbf{a}_{\mathbf{[i]}} - j}; q)_{\mathbf{n}_{[i]} + k - n_i + 1} (q; q)_j (q; q)_{k - j}},
\]
which is equivalent to 
\begin{equation} \label{EQ:defgamma3}
\gamma_{i, j}^{k + n_i} = \frac{(-1)^j q^{j (j - 1)/2 + a_i(1 - t) + t j} (q^{1 - \mathbf{b} + a_i - j}; q)_{\mathbf{m} + k} (q^{- a_i + \boldsymbol{\alpha} + j - 1}; q)_1 (q^{1 + a_i + \boldsymbol{\beta} + k - j}; q)_1}{(q^{a_i - \mathbf{a}_{\mathbf{[i]}} - j}; q)_{\mathbf{n}_{[i]} + k + 1} (q; q)_j (q; q)_{k + n_i - j}}.
\end{equation}
Moreover, we set $\gamma_{i, j}^{k + n_i} = 0$ if $k + n_i < 0$. In view of this convention, we can write~\eqref{EQ:qtargetsum2} as
\begin{equation}\label{eq:Sintermsofgammas}
S(z) = \sum_{k = - n_{\max}}^\infty z^k \sum_{i = 1}^r \sum_{j = 0}^{k + n_i} \gamma_{i, j}^{k + n_i}= \sum_{k = - n_{\max}}^\infty \mu_kz^k,
\end{equation}
where the last equality is the definition of $\mu_k$. Whenever $- n_{\max} \geq - m_{\min}$, we have $k \geq - m_{\min}$ in the above sum. Otherwise, when $- n_{\max} < - m_{\min}$, 
we can write 
\begin{equation} \label{EQ:targetsum3}
S(z) = \sum_{k = - n_{\max}}^{- m_{\min} - 1} \mu_k z^k + S_1(z),
\end{equation}
where 
\[
S_1(z) = \sum_{k = - m_{\min}}^\infty \mu_k z^k, \quad \quad \mu_k = \sum_{i = 1}^r \sum_{j = 0}^{k + n_i} \gamma_{i, j}^{k + n_i}. 
\]
To show that $S_1(z)$ is a rational function of $z$, for each $k\in\Z$  we define the function
\begin{equation}\label{eq:fF-defined}
\tilde{f}_k(z) = - \frac{( z q^{1 - \mathbf{b}}; q)_{k + \mathbf{m}} (z q^{1 + \boldsymbol{\beta} + k}; q)_1 (z^{-1} q^{\boldsymbol{\alpha} - 1}; q)_1}{(z q^{- \mathbf{a}}; q)_{k + \mathbf{n}+1}} \quad \text{ and } \quad F_k(z) = z^{-t} \tilde{f}_k(z).    
\end{equation}
Clearly, $\tilde{f}_k(z)$ is  a rational function of  $z$ for each $k \in \Z$. Moreover, for $k\ge -m_{\min}$ all terms in the numerator except for $(z^{-1}q^{\boldsymbol{\alpha}-1};q)_{1}$ are polynomial in $z$ so that 
all nonzero poles of $\tilde{f}_k$ come from the zeros of the denominator as the only singularity of  $(z^{-1}q^{\boldsymbol{\alpha}-1};q)_{1}$ is a  multiple pole at $z=0$. When $k + n_i + 1 > 0$ for each $i = 1, \ldots, r$, the nonzero poles of $\tilde{f}_k(z)$ are 
points $z$ such that $(z q^{-\mathbf{a}}; q)_{k + \mathbf{n} + 1} = 0$, {\it i.e.}, 
\[
z = q^{a_i - j}, \quad \ i =1, \ldots, r, \quad \text{ and } \quad \ j = 0, \ldots, k + n_i. 
\]
Note that the terms with $k+n_i<0$ do not contribute to the sum of residues.  
Since $a_i - a_j \not\in \Z$ for $1 \leq i < j \leq r$, we see that all these poles are simple. Therefore, the finite poles of $F_k$ include all poles of $\tilde{f}_k$ and the pole at $z =0$ of order $t+u$ if $t+u>0$.   After a straightforward calculation, we get in view of \eqref{EQ:defgamma3} that 
\[
\res_{z = q^{a_i - j}} F_k(z) = \frac{(-1)^j q^{j (j - 1)/2 + a_i(1 - t) + t j} (q^{1 - \mathbf{b} + a_i - j}; q)_{\mathbf{m} + k} (q^{- a_i + \boldsymbol{\alpha} + j - 1}; q)_1 (q^{1 + a_i + \boldsymbol{\beta} + k - j}; q)_1}{(q^{a_i - \mathbf{a}_{\mathbf{[i]}} - j}; q)_{\mathbf{n}_{[i]} + k + 1} (q; q)_j (q; q)_{k + n_i - j}} = \gamma_{i, j}^{k + n_i}. 
\]
Hence, for each $k \geq - m_{\min}$ we have 
\[
\sum_{\text{over all finite nonzero poles of } F_k(z)} \res F_k(z) = \sum_{i = 1}^r \sum_{j = 0}^{k + n_i} \gamma_{i, j}^{k + n_i}, 
\]
where only the terms with $k + n_i \geq 0$ are non-vanishing. Next, we utilize the fact that the sum of residues of a rational function 
at all finite points equals its residue at infinity, which is the coefficient at $z^{-1}$ in the asymptotic expansion 
\[
F_k(z)\sim \sum_{j =-\infty}^{M+ v-N-r-t} C_j(k) z^j~~\text{as}~~z\to\infty.
\]
Therefore, we have 
\begin{equation}\label{eq:residues-coefficients}
\sum_{i = 1}^r \sum_{j = 0}^{k + n_i} \gamma_{i, j}^{k + n_i} = C_{-1}(k) - \res_{z = 0} F_k(z).    
\end{equation}
Corollary~\ref{COR:qBeukersJouhet2} implies that the coefficient $C_{-1}(k)$ at $z^{-1}$ equals $0$ if $M + v - N - r - t < -1$, and  equals  $(q^vB)^k Q_p(q^{-k})$ if $M + v - N - r - t = p - 1$ with $p \in \N_{0}$, where $Q_p(y) = \sum_{i = 0}^p a_{p, i} y^i$ is a polynomial of degree $p$ in $y$. Similarly, the residue of $F_k(z)$ at $z = 0$ is equal to $0$ if $u + t \leq 0$, and is equal to $\tilde{Q}_{u + t - 1}(q^k)$ if $u + t \geq 1$, where $\tilde{Q}_{u + t - 1}(y) = \sum_{i = 0}^{u + t - 1} b_{u + t - 1, i} y^i$ is a polynomial of degree $u + t - 1$ in $y$. Furthermore, we extend the definitions of $Q_{p}$ and $\tilde{Q}_{u+t-1}$ to negative degrees by $Q_{-\ell}(y) = \tilde{Q}_{-\ell}(y) \equiv 0$ for each $\ell = 1, 2, \ldots$. 

Set $$
k_{\min}=\min(n_{\max},m_{\min}).
$$ 
In view of \eqref{EQ:qtargetsum2}, the target sum~\eqref{EQ:targetsum3} equals (the first sum vanishes if $-n_{max}\ge-m_{min}$)
\begin{align*} 
S(z) & = \sum_{k = - n_{\max}}^{- m_{\min} - 1} \mu_k z^k + \sum_{k = - k_{\min}}^{\infty} z^k \sum_{i = 1}^r \sum_{j = 0}^{k + n_i} \gamma_{i, j}^{k + n_i} \nonumber \\
& = \sum_{k = - n_{\max}}^{- m_{\min} - 1} \mu_k z^k +  \sum_{k = - k_{\min}}^{\infty} z^k \left[ C_{-1}(k) - \res_{z = 0} F_k(z) \right] \nonumber \\
& = \sum_{k = - n_{\max}}^{- m_{\min} - 1} \mu_k z^k +  \sum_{k = - k_{\min}}^{\infty} z^k \left[(q^{v}B)^k Q_p(q^{-k}) - \tilde{Q}_{u + t - 1}(q^k) \right]\nonumber  \\
& = \sum_{k = - n_{\max}}^{- m_{\min} - 1} \mu_k z^k +  \sum_{k = - k_{\min}}^{\infty} (q^{v}B z)^k \left[a_{p, 0} + a_{p, 1} q^{-k} + \cdots + a_{p, p} q^{-p k} \right] \nonumber \\
& \quad \ - \sum_{k = - k_{\min}}^{\infty} z^k \left[b_{u + t -1, 0} + b_{u + t - 1, 1} q^k + \cdots + b_{u + t - 1, u + t - 1} q^{k (u + t - 1)}\right] \nonumber \\ 
& =  \sum_{k = - n_{\max}}^{- m_{\min} - 1} \mu_k z^k  + z^{- k_{\min}} \left[ \frac{a_{p, 0} (q^{v}B)^{-k_{\min}}}{1 - q^{v}Bz} + \frac{a_{p, 1} (q^{v}B)^{- k_{\min}} q^{k_{\min}}}{1 - q^{v-1}Bz} + \cdots + \frac{a_{p,p} (q^{v}B)^{-k_{\min}} q^{p k_{\min}}}{1 - q^{v-p}Bz} \right. \\ 
& \quad \ \left. - \frac{b_{u + t - 1, 0}}{1 - z} -  \frac{b_{u + t - 1, 1}q^{-k_{\min}}}{1 - qz} - \cdots -  \frac{b_{u + t - 1, u + t - 1}q^{-k_{\min}(u+t-1)}}{1 - q^{u + t - 1}z} \right].
\end{align*}
Note that the common denominator on the right-hand side of the above identity equals $(W z; q)_{p + 1} (z; q)_{(u + t)_{+}}$ and thus we get~\eqref{EQ:generalizedqBeukersJouhet}. 

Next, we deduce an explicit formula for the coefficient $\lambda_k$ in~\eqref{EQ:generalizedqBeukersJouhet}. We first recall the following two well-known identities 
\begin{align} \label{EQ:qPochhammerformula2}
(q; q)_{k - j} & = \frac{(-1)^j q^{j (j - 1)/2 - kj} (q; q)_k}{(q^{-k}; q)_j}, \nonumber \\
(q^{s - j}; q)_m & = \frac{(q^s; q)_m (q^{1 - s}; q)_j}{q^{m j} (q^{1 - s - m}; q)_j} .
\end{align}
Applying the above two identities to~\eqref{EQ:defgamma3}, we get 
\begin{align*}
\gamma_{i, j}^{k + n_i} & = \frac{(q^{- k - n_i}; q)_j (q^{\mathbf{b} - a_i}; q)_j (q^{\mathbf{a}_{[i]} - a_i - \mathbf{n}_{[i]} - k}; q)_j(q^{1 - \mathbf{b} + a_i}; q)_{\mathbf{m} + k}}{(q; q)_{k + n_i} (q^{a_i - \mathbf{a}_{[i]}}; q)_{\mathbf{n}_{[i]} + k + 1} (q^{\mathbf{b} - a_i - \mathbf{m} - k}; q)_j (q^{1 - a_i + \mathbf{a}_{[i]}}; q)_j (q; q)_j} 
\\
& \quad \ \times \frac{q^{a_i(1 - t)} q^{(N - M + r -1+t-v)j}  (q^{- a_i + \boldsymbol{\alpha}}; q)_j(q^{-a_i+ \boldsymbol{\alpha} -1}; q)_1 (q^{-a_i - \boldsymbol{\beta} - k}; q)_j (q^{1 + a_i + \boldsymbol{\beta} + k}; q)_1 }{(q^{- 1 - a_i - \boldsymbol{\beta} - k}; q)_j (q^{- a_i + \boldsymbol{\alpha} - 1}; q)_j}.
\end{align*}
Thus, from \eqref{eq:Sintermsofgammas} we have $S(z) = \sum_{k=-n_{\max}}^\infty \mu_kz^k$ with
\begin{align} \label{EQ:muexplicitformula}
\mu_k & = \sum_{i = 1}^r \sum_{j = 0}^{k + n_i} \gamma_{i, j}^{k + n_i}  \nonumber \\
& = \sum_{i = 1}^r \frac{q^{a_i(1-t)}  (q^{1 + a_i + \boldsymbol{\beta} + k}; q)_1 (q^{-a_i+\boldsymbol{\alpha} -1}; q)_1 (q^{1 - \mathbf{b} + a_i}; q)_{\mathbf{m} + k}}{(q; q)_{k + n_i} (q^{a_i - \mathbf{a}_{[i]}}; q)_{\mathbf{n}_{[i]} + k + 1}}  \nonumber \\
& \quad \ \times {_{2 r + u + v}\phi_{2 r + u + v - 1}}\left(\left.\!\!\begin{array}{c} q^{- k - n_i}, q^{\mathbf{b} - a_i}, q^{\mathbf{a}_{[i]} - a_i - \mathbf{n}_{[i]} - k}, q^{- a_i - \boldsymbol{\beta} - k}, q^{-a_i + \boldsymbol{\alpha}} \\ q^{\mathbf{b} - a_i - \mathbf{m} - k}, q^{1 - a_i + \mathbf{a}_{[i]}}, q^{-1 - a_i - \boldsymbol{\beta} -  k}, q^{- a_i + \boldsymbol{\alpha} - 1} \end{array} \right| q^{N - M + r-1 + t-v} \!\right),
\end{align}
where each term with $k + n_i < 0$ vanishes by convention. Next, we need the Gauss expansion 
\[
(a; q)_n = \sum_{j = 0}^n \left[ \begin{matrix} n \\ j \end{matrix} \right]_{q} q^{j (j - 1)/2} (-a)^j,
\]
where $\left[ \begin{matrix} n \\ j \end{matrix} \right]_{q}$ is the $q$-binomial coefficient given in \eqref{EQ:Gaussexpansion}. This leads to  
\[
(Wz; q)_{p + 1} (z; q)_{(u + t)_{+}} := \sum_{j = 0}^{p + 1 + (u + t)_{+}} D_j (-z)^j,
\]
where 
\[
D_j = \sum_{i + \ell = j} \left[ \begin{matrix} p + 1 \\ \ell \end{matrix} \right]_{q} \left[ \begin{matrix} (u + t)_{+} \\ i \end{matrix} \right]_{q} q^{(\ell (\ell - 1) + i (i - 1))/2} W^\ell.
\]
Note that for $u+t\le0$ we have
$$
D_j=\left[ \begin{matrix} p + 1 \\ j \end{matrix} \right]_{q}  q^{j(j - 1)/2} W^j.
$$
Multiplying both sides of~\eqref{EQ:generalizedqBeukersJouhet} by $(Wz;q)_{p+1}(z;q)_{(u + t)_{+}}$ and applying the aforementioned expansion, we get 
\[
\sum_{j = 0}^{p + 1 + (u + t)_{+}} D_j (- z)^j \sum_{k = - n_{\max}}^\infty \mu_k z^k = \sum_{k = - n_{\max}}^{p + (u + t)_{+} - m_{\min}} \lambda_k z^k. 
\]
Multiplying both sides by $z^{n_{\max}}$, changing $k+n_{\max}\to k$ and writing $\hat{\mu}_k = \mu_{k - n_{\max}}, \hat{\lambda}_k = \lambda_{k - n_{\max}}$, we obtain 
\[
\sum_{j = 0}^{p + 1 + (u + t)_{+}} D_j (- z)^j \sum_{k = 0}^\infty \hat{\mu}_k z^k = \sum_{s = 0}^{\infty}z^{s}\sum_{j + k = s} D_j (-1)^j \hat{\mu}_k \:=\!\! \sum_{s = 0}^{p + (u + t)_{+} - m_{\min} + n_{\max}} \hat{\lambda}_s z^s.
\]
On account of $D_j = 0$ for $j > p + (u + t)_{+} + 1$, it implies that 
\[
\lambda_{s - n_{\max}} = \hat{\lambda}_s = \sum_{j = 0}^{\min(s, p + (u + t)_{+} + 1)} D_j (-1)^j \hat{\mu}_{s - j} = \sum_{j = 0}^{\min(s, p + (u + t)_{+} + 1)} D_j (-1)^j \mu_{s - j - n_{\max}}
\]
for $s = 0, \ldots, p + (u + t)_{+} - m_{\min} + n_{\max}$. Set $k = s - n_{\max}$. Then 
\[
\lambda_k = \sum_{j = 0}^{\min(k + n_{\max}, p + (u + t)_{+} + 1)} (-1)^j D_j \mu_{k - j} = \sum_{j = \max(- n_{\max}, k - p - (u + t)_{+} - 1)}^k (-1)^{k - j} D_{k - j} \mu_j
\]
for $k = - n_{\max}, \ldots, p + (u + t)_{+} - m_{\min}$. Substituting the formula~\eqref{EQ:muexplicitformula} for $\mu_k$ in the above identity, we finally arrive at the explicit formula~\eqref{EQ:explicitformulalambda} for $\lambda_k$.
\hfill
\qedsymbol

\begin{remark}
The identity established in \cite[Theorem~1]{KKK} is recovered from \eqref{EQ:generalizedqBeukersJouhet} by setting $u = v = 0$.
\end{remark}

\begin{remark}
The $q$-identity of Beukers-Jouhet  \cite[Theorem 1.2]{BJ} is derived from~\eqref{EQ:generalizedqBeukersJouhet} by taking $\mathbf{m}$, $\mathbf{n}$, $u$, and $v$ to be zero vectors of appropriate sizes and setting $z = q^k \bar{z}$ and $t = \ell - k$ 
for some $k,\ell\in\N$. 
\end{remark}

\noindent Below we present an example of the right-hand side of \eqref{EQ:generalizedqBeukersJouhet} with $r = 3$, $u=v=1$. 
\begin{example} \label{EX: qex1}
Set $\mathbf{m} = (1,1,2), \mathbf{n} = (1,2,2),  \mathbf{a} = (1, 1/2, 1/3), \mathbf{b} = (2, 3, 4), \boldsymbol{\alpha} = (1/2)$, and $\boldsymbol{\beta} = (1/3)$. Then the right-hand side of~\eqref{EQ:generalizedqBeukersJouhet} takes the form 
\begin{multline*}
\frac{1}{z^2 (1 - z)} \left[ \left(-\frac{\left(1-q^{-13/12}\right) q^{1/3} \left(1-q^{-7/15}\right)}{\left(1-q^{8/3}\right) \left(1-q^{5/3}\right)
   \left(1-q^{-1/6}\right)}-\frac{\left(1-q^{-5/4}\right) \left(1-q^{-3/10}\right) q^{1/2}}{\left(1-q^{-5/2}\right)
   \left(1-q^{-3/2}\right) \left(1-q^{1/6}\right)}-q^{-3/4}\right) z \right. \\
   \left. +\frac{\left(1-q^{-13/12}\right) q^{1/3} \left(1-q^{-7/15}\right)}{\left(1-q^{8/3}\right) \left(1-q^{5/3}\right)
   \left(1-q^{-1/6}\right)}+\frac{\left(1-q^{-5/4}\right) \left(1-q^{-3/10}\right) q^{1/2}}{\left(1-q^{-5/2}\right)
   \left(1-q^{-3/2}\right) \left(1-q^{1/6}\right)} \right].
\end{multline*}
\end{example}

The following corollary is established by employing the same argument as that appears in the proof of \cite[Proposition~1]{KKK}.
\begin{corollary}
Suppose conditions of Theorem~\ref{THM: qBeukersJouhet2} are satisfied and $|W|<1$. Then
\begin{multline*}
    \sum_{i = 1}^r q^{a_i(1 - t)} \frac{(q^{1 - \mathbf{b} + a_i}; q)_{\infty} (q^{- a_i - 1 + \boldsymbol{\alpha}}; q)_1}{(q^{a_i - \mathbf{a}_{[i]}}; q)_{\infty}} {_{r + u}\phi_{r + u - 1}}\!\left(\left.\!\!\begin{array}{c} q^{\mathbf{b} - a_i}, q^{- a_i + \boldsymbol{\alpha}} \\ q^{1 + \mathbf{a}_{[i]} - a_i}, q^{- a_i - 1 + \boldsymbol{\alpha}} \end{array} \right| W\!\right)  \\
 = \frac{(q^{u+t}; q)_{\infty}}{(W; q)_{p+1}} \sum_{k=-n_{\max}}^{p+(u+t)_{+}-m_{\min}}\lambda_k,
\end{multline*}
where $\lambda_k$ is specified by~\eqref{EQ:explicitformulalambda}. Note that $(q^{u+t}; q)_{\infty}=0$ if $u+t\le0$, so that the right-hand side vanishes under this condition. 
\end{corollary}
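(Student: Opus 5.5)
The plan is to let $z\to1^{-}$ in identity \eqref{EQ:generalizedqBeukersJouhet} of Theorem~\ref{THM: qBeukersJouhet2}, after multiplying both sides by $(z;q)_\infty$, in the spirit of \cite[Proposition~1]{KKK}.

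\emph{Right-hand side.} Since $(z;q)_\infty/(z;q)_{(u+t)_+}=(zq^{(u+t)_+};q)_\infty$, the right-hand side times $(z;q)_\infty$ equals
$$
\frac{(zq^{(u+t)_+};q)_\infty}{(Wz;q)_{p+1}}\sum_{k}\lambda_k z^k .
$$
This tends to $\frac{(q^{(u+t)_+};q)_\infty}{(W;q)_{p+1}}\sum_k\lambda_k$ as $z\to1$. Here $(W;q)_{p+1}\neq0$, since $|W|<1$ gives $|Wq^j|<1$. For $u+t>0$ we have $(q^{(u+t)_+};q)_\infty=(q^{u+t};q)_\infty$. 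For $u+t\le0$ the factor is $(1;q)_\infty=0$, which matches the stated $(q^{u+t};q)_\infty=0$. So the limit of the right-hand side is exactly the claimed expression.

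\emph{Left-hand side, first factor.} The ${}_{r+u}\phi_{r+u-1}$ factor at argument $Wz$ converges absolutely at $z=1$ because $|W|<1$. It is continuous there, so it tends to the same series evaluated at $W$.

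\emph{Left-hand side, second factor.} The key point is the behaviour of $(z;q)_\infty\,{}_{r+v}\phi_{r+v-1}(\cdots|z)$ as $z\to1$. I will use the $q$-analogue of Abel's lemma (equivalently, the $q$-binomial theorem): if the coefficients $c_n$ of $\sum c_n z^n$ tend to a limit $c_\infty$, then
$$
(z;q)_\infty\sum_n c_n z^n=(zq;q)_\infty\,(1-z)\sum_n c_n z^n\to (q;q)_\infty\,c_\infty .
$$
For the second factor, the coefficient is a ratio of $q$-Pochhammer symbols of length $n$ divided by $(q;q)_n$. As $n\to\infty$, $(q^{c};q)_n\to(q^{c};q)_\infty$ and $(q;q)_n\to(q;q)_\infty$. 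Hence $c_\infty$ is the ratio of infinite products, $c_\infty=\prod(\text{top})_\infty\big/\big(\prod(\text{bottom})_\infty\,(q;q)_\infty\big)$, and the limit equals that ratio times $(q;q)_\infty$.

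The top parameters are $q^{1-\b+a_i+\m-n_i}$ and $q^{2+a_i+\bbeta-n_i}$; the bottom ones are $q^{1-\a_{[i]}+a_i+\n_{[i]}-n_i}$ and $q^{1+a_i+\bbeta-n_i}$. The limit is therefore
$$
\frac{(q^{1-\b+a_i+\m-n_i};q)_\infty\,(q^{2+a_i+\bbeta-n_i};q)_\infty}{(q^{1-\a_{[i]}+a_i+\n_{[i]}-n_i};q)_\infty\,(q^{1+a_i+\bbeta-n_i};q)_\infty}.
$$

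\emph{Combining with the prefactors.} For each $i$, I combine this with the prefactors, using $(x;q)_k(xq^k;q)_\infty=(x;q)_\infty$ for every integer $k$:
\begin{itemize}
\item $(q^{1-\b+a_i};q)_{\m-n_i}$ times $(q^{1-\b+a_i+\m-n_i};q)_\infty$ gives $(q^{1-\b+a_i};q)_\infty$.
\item $(q^{a_i-\a_{[i]}};q)_{\n_{[i]}-n_i+1}$ times $(q^{1-\a_{[i]}+a_i+\n_{[i]}-n_i};q)_\infty$ gives $(q^{a_i-\a_{[i]}};q)_\infty$.
\item $(q^{1+a_i+\bbeta-n_i};q)_1$ times $(q^{2+a_i+\bbeta-n_i};q)_\infty/(q^{1+a_i+\bbeta-n_i};q)_\infty$ gives $1$.
\end{itemize}
The factor $z^{-n_i}\to1$. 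This yields the stated left-hand side.

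\emph{Main obstacle.} The technical step is justifying the $q$-Abel limit uniformly in the parameters. One also has to handle degenerate cases where some infinite product vanishes, for instance when some bottom parameter makes a Pochhammer symbol zero. I would treat these by assuming generic parameters and extending by continuity/analyticity in the parameters, as in \cite{KKK}.
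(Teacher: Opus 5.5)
Your proposal is correct and is essentially the argument the paper intends; the paper simply defers to the proof of \cite[Proposition~1]{KKK}. That argument multiplies \eqref{EQ:generalizedqBeukersJouhet} by $(z;q)_\infty$, lets $z\to1^-$ using $\lim_{z\to1^-}(z;q)_\infty\,{}_{r+v}\phi_{r+v-1}(\cdot\,|\,z)=(\text{top};q)_\infty/(\text{bottom};q)_\infty$, and telescopes the finite $q$-shifted factorials into infinite ones via $(x;q)_k(xq^k;q)_\infty=(x;q)_\infty$. The ``obstacle'' you flag is not a real one: for fixed parameters the ordinary Abel theorem applied to $\sum_n(c_n-c_{n-1})z^n$ already gives the limit, and the degenerate parameter values are excluded because the series in Theorem~\ref{THM: qBeukersJouhet2} must already be well defined.
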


Our next goal is to deduce an identity for terminating $q$-series. 
Denote 
\begin{equation}\label{eq:ABM2N2}
A=\sum a_j,~~B=\sum b_j,~~M_2=\sum m_i^2,~~N_2=\sum n_i^2,~~\alpha=\sum\alpha_j, ~~\beta=\sum\beta_j   
\end{equation}
We will need an alternative asymptotic formula for $\tilde{f}_k(z)$ defined in 
\eqref{eq:fF-defined}. In view of 
$$
(a;q)_{n}=(-a)^nq^{\binom{n}{2}}\left(\frac{q^{1-n}}{a};q\right)_{n},
$$
we have by repeating \emph{mutatis mutandis} the calculation from \cite[p.404]{KKK}
\begin{multline*}
\tilde{f}_k(z)= - \frac{(zq^{1 - \mathbf{b}}; q)_{k + \mathbf{m}} (z q^{1 + \boldsymbol{\beta} + k}; q)_1 (z^{-1} q^{\boldsymbol{\alpha} - 1}; q)_1}{(z q^{- \mathbf{a}}; q)_{k + \mathbf{n}+1}}
\\
=(-1)^{M-N-r+v+1}q^{k(A-B+M-N+v)}q^{A+(\a,\n)-(\b,\m)+(M_2-N_2+M-N)/2+v+\beta}
\\
\times z^{M-N-r+v} 
\times\underbrace{\frac{\left(z^{-1}q^{\b-k-\m};q\right)_{k+\m}(z^{-1} q^{\boldsymbol{\alpha} - 1}; q)_1 (z^{-1}q^{-\boldsymbol{\beta}-k-1}; q)_1}{\left(z^{-1}q^{\a-k-\n};q\right)_{k+\n+1}}}_{\hat{f}_k(z)}.
\end{multline*}
Then, by the standard Taylor series  $\log(1-x)=-\sum_{s\ge1}x^s/s$, we compute
\begin{multline*}
\log\hat{f}_k(z)=\sum_{i=1}^{r}\Bigg[\sum_{j=0}^{k+m_i-1}\log(1-z^{-1}q^{b_i-m_i-k+j})-\sum_{j=0}^{k+n_i}\log(1-z^{-1}q^{a_i-n_i-k+j})\Bigg]
\\
+\sum_{\ell=1}^{u}\log(1-z^{-1}q^{\alpha_{\ell}-1})
+\sum_{\ell=1}^{v}\log(1-z^{-1}q^{-\beta_{\ell}-k-1})
\\
=\sum_{i=1}^{r}\Bigg[\sum_{j=0}^{k+n_i}\sum_{s\ge1}\frac{1}{s}z^{-s}q^{(a_i-n_i-k+j)s}-\sum_{j=0}^{k+m_i-1}\sum_{s\ge1}\frac{1}{s}z^{-s}q^{(b_i-m_i-k+j)s}\Bigg]
\\
-\sum_{\ell=1}^{u}\sum_{s\ge1}\frac{z^{-s}}
{s}q^{s(\alpha_{\ell}-1)}
-\sum_{\ell=1}^{v}\sum_{s\ge1}\frac{z^{-s}}
{s}q^{-s(\beta_{\ell}+k+1)}
=\sum_{s\ge1}z^{-s}Q_s,
\end{multline*}
where
\begin{subequations}\label{eq:Qq}
\begin{multline}\label{eq:BigQ}
Q_s=Q_s(\a,\b,\m,\n,\boldsymbol{\alpha},\boldsymbol{\beta})=\frac{1}{s}\sum_{i=1}^{r}\Bigg[\sum_{j=0}^{k+n_i}q^{(a_i-n_i-k+j)s}-\sum_{j=0}^{k+m_i-1}q^{(b_i-m_i-k+j)s}\Bigg]
\\
-\frac{1}{s}\sum_{\ell=1}^{u}q^{s(\alpha_{\ell}-1)}
-\frac{1}{s}\sum_{\ell=1}^{v}q^{-s(\beta_{\ell}+k+1)}
.
\end{multline}
Then
$$
\hat{f}_k(z)=\sum\limits_{s=0}^{\infty}\frac{\hat{q}_s}{z^s}
$$
with
\begin{equation}\label{eq:smnallq}
\hat{q}_0=1,~~~~\hat{q}_s=\frac{1}{s}\sum_{t=1}^{s}tQ_{t}\hat{q}_{s-t}.
\end{equation}
\end{subequations}
Hence,
$$
F_{k}(z)=(-1)^{M-N-r+v}q^{k(A-B+M-N+v)}q^{A+\a\cdot\n-\b\cdot\m+(M_2-N_2+M-N)/2+v+\beta}z^{M-N-r+v-t}\sum\limits_{s=0}^{\infty}\frac{\hat{q}_s}{z^s},
$$
where $\a\cdot\n=a_1n_1+\cdots +a_rn_r$.  This implies that the coefficient $C_{-1}(k)$ in the expansion (for large $z$)
$$
F_{k}(z)=\sum\limits_{j=-\infty}^{M-N-r+v-t}C_{j}(k)z^{j}.
$$
is equal to zero if $M-N-r+v-t<-1$ and  given by 
$$
C_{-1}(k)=(-1)^{M-N-r+v}q^{k(A-B+M-N+v)}q^{A+\a\cdot\n-\b\cdot\m+(M_2-N_2+M-N)/2+v+\beta}\hat{q}_p
$$
if $p=M-N-r+v-t+1\ge0$, and $C_{-1}(k)=0$ if $p=M-N-r+v-t+1<0$. Hence, defining $\hat{q}_{s}=0$ for $s<0$ and using our previous definition $p=\max(-1,M-N-r+v-t+1)$ we get consistent definition of $\hat{q}_p$ for all values of parameters.

In a similar fashion, when $z$ is small enough, we use the notation from \eqref{eq:ABM2N2} to get
$$
\tilde{f}_k(z)= (-1)^{u+1}z^{-u}q^{\alpha-u}\: \underbrace{\frac{(zq^{1 - \mathbf{b}}; q)_{k + \mathbf{m}} (zq^{1 + \boldsymbol{\beta} + k}; q)_1 (zq^{1-\boldsymbol{\alpha}}; q)_1}{(z q^{- \mathbf{a}}; q)_{k + \mathbf{n}+1}}}_{\bar{f}_{k}(z)},
$$
so that
\begin{multline*}
\log(\bar{f}_{k}(z))=\sum_{i=1}^{r}\Bigg[\sum_{j=0}^{k+m_i-1}\log(1-zq^{1-b_i+j})-\sum_{j=0}^{k+n_i}\log(1-zq^{-a_i+j})\Bigg]
\\
+\sum_{\ell=1}^{u}\log(1-zq^{1-\alpha_{\ell}})
+\sum_{\ell=1}^{v}\log(1-zq^{1+\beta_{\ell}+k})
=\sum_{s=1}^{\infty}P_{s}z^{s},
\end{multline*}
where
\begin{subequations}\label{eq:Pp}
\begin{equation}\label{eq:bigp}
sP_s=sP_s(\a,\b,\m,\n,\boldsymbol{\alpha},\boldsymbol{\beta})=\sum_{i=1}^{r}\Bigg[\sum_{j=0}^{k+n_i}q^{(-a_i+j)s}-\sum_{j=0}^{k+m_i-1}q^{(1-b_i+j)s}\Bigg]-\sum_{\ell=1}^{u}q^{(1-\alpha_{\ell})s}-\sum_{\ell=1}^{v}q^{(1+\beta_{\ell}+k)s}.
\end{equation}
Hence,
$$
F_{k}(z)=z^{-t}\tilde{f}_k(z)=(-1)^{u+1}q^{\alpha-u}z^{-u-t}e^{\log(\bar{f}_k(z))}=(-1)^{u+1}q^{\alpha-u}z^{-u-t}\sum_{s=0}^{\infty}p_{s}z^s
$$
with
\begin{equation}\label{eq:smnallp}
p_0=1,~~~~p_s=\frac{1}{s}\sum_{t=1}^{s}tP_{t}p_{s-t}.
\end{equation}
\end{subequations}
This yields for the coefficient at $z^{-1}$
$$
\res\limits_{z=0}{F_k(z)}=(-1)^{u+1}q^{\alpha-u}p_{u+t-1}
$$
provided that $u+t\ge1$. Otherwise, the residue is equal to zero. So, we set by definition $p_{s}=0$ for $s<0$.
On the other hand, it follows from \eqref{eq:residues-coefficients} and \eqref{eq:Sintermsofgammas} 
that for $k\ge-m_{\min}$ 
$$
C_{-1}(k)- \res\limits_{z=0}{F_k(z)}=\sum_{i=1}^{r}\sum_{j=0}^{k+n_i}\gamma_{i,j}^{k+n_i}=\mu_k.
$$
Substituting here the values of $\mu_k$ from \eqref{EQ:muexplicitformula}, we arrive at
\begin{corollary}
Suppose  that $k\ge-m_{\min}$, $\hat{q}_{-1}=0$, $\hat{q}_0=1$, and  $\hat{q}_p=\hat{q}_p(\a,\b,\m,\n,\boldsymbol{\alpha},\boldsymbol{\beta})$ is given by \eqref{eq:Qq} for $p\ge1$.  
Similarly, let $p_{-1}=0$, $p_0=1$, and $p_{s}=p_{s}(\a,\b,\m,\n,\boldsymbol{\alpha},\boldsymbol{\beta})$ be given by \eqref{eq:Pp} for $s\ge1$.
Then 
\begin{multline}\label{eq:alphaIdentity}
\sum_{i = 1}^r \frac{q^{a_i(1-t)}  (q^{1 + a_i + \boldsymbol{\beta} + k}; q)_1 (q^{-a_i+\boldsymbol{\alpha} -1}; q)_1 (q^{1 - \mathbf{b} + a_i}; q)_{\mathbf{m} + k}}{(q; q)_{k + n_i} (q^{a_i - \mathbf{a}_{[i]}}; q)_{\mathbf{n}_{[i]} + k + 1}}  \nonumber \\
\times {_{2 r + u + v}\phi_{2 r + u + v - 1}}\left(\left.\!\!\begin{array}{c} q^{- k - n_i}, q^{\mathbf{b} - a_i}, q^{\mathbf{a}_{[i]} - a_i - \mathbf{n}_{[i]} - k}, q^{- a_i - \boldsymbol{\beta} - k}, q^{-a_i + \boldsymbol{\alpha}} \\ q^{\mathbf{b} - a_i - \mathbf{m} - k}, q^{1 - a_i + \mathbf{a}_{[i]}}, q^{-1 - a_i - \boldsymbol{\beta} -  k}, q^{- a_i + \boldsymbol{\alpha} - 1} \end{array} \right| q^{N - M + r-1 + t-v} \!\right)
\\
=(-1)^{M-N-r+v}q^{k(A-B+M-N+v)}q^{A+ \a\cdot\n - \b\cdot\m+(M_2-N_2+M-N)/2+v+\beta}\hat{q}_p-(-1)^{u+1}q^{\alpha-u}p_{u+t-1},
\end{multline}
where $A,B,M_2,N_2,\alpha,\beta$ are defined by \eqref{eq:ABM2N2} and $\a\cdot\n=a_1n_1+\cdots +a_rn_r$ is the scalar product. 
\end{corollary}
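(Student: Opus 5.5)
The identity is a residue-sum evaluation for each fixed $k\ge -m_{\min}$, essentially a restatement of the computation just carried out. I would proceed in three steps, taking all ingredients from the proof of Theorem~\ref{THM: qBeukersJouhet2} and the two expansions of $F_k$ derived after Corollary~\ref{COR:qBeukersJouhet2}.

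\emph{Step 1: the left-hand side is $\mu_k$.} For $k\ge -m_{\min}$ the function $F_k(z)=z^{-t}\tilde f_k(z)$ from \eqref{eq:fF-defined} is rational. Its nonzero finite poles are the points $z=q^{a_i-j}$, $0\le j\le k+n_i$. These poles are simple because $a_i-a_j\notin\Z$, and the residues there equal $\gamma_{i,j}^{k+n_i}$ as given by \eqref{EQ:defgamma3}. Summing over $j$, and using the rewriting of $\gamma_{i,j}^{k+n_i}$ via \eqref{EQ:qPochhammerformula2}, gives $\mu_k$ in the form \eqref{EQ:muexplicitformula}, which is literally the left-hand side of \eqref{eq:alphaIdentity}. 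Terms with $k+n_i<0$ vanish by convention.

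\emph{Step 2: residue theorem.} The sum of all finite residues of the rational function $F_k$, including the one at $z=0$, equals the coefficient $C_{-1}(k)$ of $z^{-1}$ in its expansion at infinity. This is relation \eqref{eq:residues-coefficients}:
\[
\mu_k=C_{-1}(k)-\res_{z=0}F_k(z).
\]

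\emph{Step 3: evaluate the two terms.} For $C_{-1}(k)$, use the large-$z$ factorization
\[
F_k(z)=(-1)^{M-N-r+v}q^{k(A-B+M-N+v)}q^{A+\a\cdot\n-\b\cdot\m+(M_2-N_2+M-N)/2+v+\beta}\,z^{M-N-r+v-t}\sum_{s\ge 0}\hat q_s z^{-s}.
\]
The coefficient of $z^{-1}$ is the prefactor times $\hat q_p$ with $p=M-N-r+v-t+1$. When this exponent is negative, the convention $\hat q_{-1}=0$ (and $\hat q_s=0$ for $s<0$) gives $C_{-1}(k)=0$, consistent with $p=\max(-1,\cdot)$.

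For the residue at $0$, use the small-$z$ form
\[
F_k(z)=(-1)^{u+1}q^{\alpha-u}z^{-u-t}\sum_{s\ge 0} p_s z^s.
\]
This gives $\res_{z=0}F_k(z)=(-1)^{u+1}q^{\alpha-u}p_{u+t-1}$, which is $0$ when $u+t\le 0$ under the convention $p_s=0$ for $s<0$.

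The recursions \eqref{eq:smnallq} and \eqref{eq:smnallp} for $\hat q_s$ and $p_s$ follow from differentiating $\exp\bigl(\sum_s Q_s w^s\bigr)$, with $w=z^{-1}$ or $w=z$ respectively. Substituting both evaluations into the relation of Step 2 gives \eqref{eq:alphaIdentity}.

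The main obstacle is bookkeeping of the prefactor constant in the large-$z$ rewriting $\tilde f_k\to\hat f_k$. One must apply $(a;q)_n=(-a)^nq^{\binom n2}(q^{1-n}/a;q)_n$ to each factor, including the $\boldsymbol\beta$ factor, and track the signs and the exponents $\a\cdot\n$, $\b\cdot\m$ and $(M_2-N_2)/2$. I would verify this factor-by-factor and then check it numerically on a small case, such as $r=2$, $u=v=0$.

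One further point needs checking: the choice of sign of $-\res_{z=0}$ as written, $-(-1)^{u+1}$. This must be confirmed by recomputing the small-$z$ prefactor from $(z^{-1}q^{\boldsymbol\alpha-1};q)_1=(-1)^u z^{-u}q^{\alpha-u}(zq^{1-\boldsymbol\alpha};q)_1$ together with the overall minus sign in \eqref{eq:fF-defined}.
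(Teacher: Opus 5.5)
Your plan is the paper's own argument. You identify the left-hand side with $\mu_k$ via \eqref{EQ:muexplicitformula}, use $\mu_k=C_{-1}(k)-\res_{z=0}F_k(z)$, and read both terms off the large- and small-$z$ expansions of $F_k$. Steps 1 and 2 are fine. The sign you flagged at zero is also correct: the identity $(z^{-1}q^{\boldsymbol\alpha-1};q)_1=(-1)^uz^{-u}q^{\alpha-u}(zq^{1-\boldsymbol\alpha};q)_1$ and the leading minus in \eqref{eq:fF-defined} give $\res_{z=0}F_k=(-1)^{u+1}q^{\alpha-u}p_{u+t-1}$.

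The genuine gap is the prefactor sign in your large-$z$ display, which you copied without doing the bookkeeping you yourself list as the main obstacle. The identity $(a;q)_n=(-a)^nq^{\binom n2}(q^{1-n}/a;q)_n$ holds for all $n\in\Z$. Using it:
\begin{itemize}
\item each $(zq^{1-b_i};q)_{k+m_i}$ contributes $(-1)^{k+m_i}$;
\item each $1/(zq^{-a_i};q)_{k+n_i+1}$ contributes $(-1)^{k+n_i+1}$;
\item each $\boldsymbol\beta$-factor contributes $-1$;
\item the overall minus in \eqref{eq:fF-defined} contributes one more.
\end{itemize}
The total sign is $(-1)^{M-N-r+v+1}$. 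The paper writes exactly this for $\tilde f_k$, then silently drops the extra $-1$ when passing to $F_k=z^{-t}\tilde f_k$, which cannot change the sign. So $C_{-1}(k)$ carries $(-1)^{M-N-r+v+1}$, and the identity as stated is off by a sign in its first term.

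The small check you proposed exposes this. Take $r=2$, $u=v=0$, $t=-1$, $\mathbf m=\mathbf n=(-k,-k)$. Every $\phi$ equals $1$ because of the parameter $q^{-k-n_i}=1$, so the left-hand side is
\[
\frac{q^{2a_1}}{1-q^{a_1-a_2}}+\frac{q^{2a_2}}{1-q^{a_2-a_1}}=-q^{a_1+a_2}.
\]
The stated right-hand side is $+q^{a_1+a_2}$, since here $p=0$, $\hat q_0=1$ and the second term vanishes. With $u=v=1$, $t=0$ and the same $\mathbf m,\mathbf n$, the left-hand side is $q^{a_1+a_2+1+\beta_1+k}-q^{\alpha_1-1}$. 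This confirms that only the first term needs its sign flipped.

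So your argument, like the paper's, does not prove the identity as written. What it proves is the corrected version, with right-hand side
\[
(-1)^{M-N-r+v+1}q^{k(A-B+M-N+v)}q^{A+\mathbf a\cdot\mathbf n-\mathbf b\cdot\mathbf m+(M_2-N_2+M-N)/2+v+\beta}\,\hat q_p+(-1)^{u}q^{\alpha-u}p_{u+t-1}.
\]
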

 
\medskip

To formulate the  confluent case of identity \eqref{EQ:generalizedqBeukersJouhet}, we recall the confluent case of the  basic hypergeometric function  \cite[formula (1.2.22)]{GR04}:
\begin{equation} \label{EQ:basicqhyper}
{_{s}\phi_{r}}\left(\left.\!\!\begin{array}{c} \mathbf{a} \\ \mathbf{b} \end{array} \right| q, z\!\right) 
= \sum_{n = 0}^\infty \frac{(a_1; q)_n (a_2; q)_n \cdots (a_s; q)_n}{(b_1; q)_n (b_2; q)_n \cdots (b_r;q)_n (q; q)_n} 
\left[(-1)^n q^{\binom{n}{2}} \right]^{1 + r - s} z^n,
\end{equation}
where $s \leq r$, and the series is convergent for all complex $z$. 
It is further convenient to introduce another version of the basic hypergeometric series considered by Bailey~\cite[Sect. 2]{Bailey64} and Slater~\cite[(3.2.1.11)]{Slater} and given by 
\begin{equation} \label{EQ:basicqhyper2}
{_{s}\hat{\phi}_{r}}\left(\left.\!\!\begin{array}{c} \mathbf{a} \\ \mathbf{b} \end{array} \right| q, z\!\right) 
= \sum_{n = 0}^\infty \frac{(a_1; q)_n (a_2; q)_n \cdots (a_s; q)_n}{(b_1; q)_n (b_2; q)_n \cdots (b_r; q)_n (q; q)_n} 
 z^n,
\end{equation}
Note that for $s = r + 1$ the two series ${_{s}\phi_{r}}$ and ${_{s}\hat{\phi}_{r}}$ coincide. If $s<r+1$ the Bailey-Slater series~\eqref{EQ:basicqhyper2} can be expressed in terms of the series \eqref{EQ:basicqhyper} by setting some of the parameters equal to zero:
\begin{equation} \label{EQ:basicqhyper2to1}
{_{s}\hat{\phi}_{r}}\left(\left.\!\!\begin{array}{c} a_1, \ldots, a_s \\ b_1, \ldots, b_r \end{array} \right| q, z\!\right) = 
 {_{r+1}\phi_{r}}\left(\left.\!\!\begin{array}{c} a_1, \ldots, a_s, 0, \ldots, 0 \\ b_1, \ldots, b_r \end{array} \right| q, z\!\right). 
\end{equation}

\begin{theorem} \label{THM: qBeukersJouhet3}
Suppose $0 \leq s < r$  are integers, $q$ with $0 < |q| < 1$ is a complex number, the vector $\mathbf{a} \in \C^r$ satisfies $a_i - a_j \not\in \Z$ for $1 \leq i < j \leq r$, and the vector $\mathbf{b} \in \C^s$  is arbitrary. Assume further that $\mathbf{m} \in \Z^s, \mathbf{n} \in \Z^r$ and $t \in \Z$. Let $\boldsymbol{\alpha} \in \C^u$, $\boldsymbol{\beta} \in \C^v$, and  $W = q^{t + r -1} \prod_{i = 1}^{r} q^{a_i} \prod_{i = 1}^{s}q^{ - b_i}$.  Then
\begin{align} 
&\sum_{i = 1}^r q^{a_i (1 - t)} \frac{(q^{1 - \mathbf{b} + a_i}; q)_{\mathbf{m} - n_i} (q^{- a_i - 1 + \boldsymbol{\alpha}}; q)_1 (q^{1 + a_i + \boldsymbol{\beta}-n_i}; q)_1 z^{-n_i}}{(q^{a_i - \mathbf{a}_{[i]}}; q)_{\mathbf{n}_{[i]} - n_i + 1}} \nonumber \\
&  \times{_{s + u}\phi_{r + u - 1}}\left(\left.\!\!\begin{array}{c} q^{\mathbf{b} - a_i}, q^{- a_i + \boldsymbol{\alpha}} \\ q^{1 + \mathbf{a}_{[i]} - a_i}, q^{- a_i - 1 + \boldsymbol{\alpha}} \end{array} \right| W q^{(s - r) a_i} z\!\right) \times {_{s + v}\hat{\phi}_{r + v - 1}}\left(\left.\!\!\begin{array}{c} q^{1 - \mathbf{b} + a_i + \mathbf{m} - n_i}, q^{2 + a_i + \boldsymbol{\beta}-n_i} \nonumber\\ q^{1 - \mathbf{a}_{[i]} + a_i + \mathbf{n}_{[i]} - n_i}, q^{1 + a_i + \boldsymbol{\beta}-n_i} \end{array} \right| z\!\right) \\ 
& = \frac{1}{(z; q)_{(u + t)_{+}}} \sum_{k = - n_{\max}}^{K} \delta_k z^k, \label{EQ:generalizedqBeukersJouhet2}
\end{align}
\noindent
where $p' = \floor{(M + v - N - r -t + 1)/(r - s)}$ and $K = \max(-m_{\min} - 1, p') + (u + t)_{+}$. The symbols $M$, $N$, $n_{\max}$, $m_{\min}$ retain their meaning from \eqref{eq:mnconfluent}.
The coefficient $\delta_k$ is given by
\begin{multline}\label{eq:qdelta_k}
\delta_{k}=
\sum_{j=\max(-n_{\max},k-(u+t)_{+})}^{k} \left[ \begin{matrix} (u+t)_{+} \\ k-j \end{matrix} \right]_{q} q^{(k-j) (k-j -1)/2} (-1)^{k-j} 
\\
\times\sum_{i = 1}^r \frac{q^{a_i(1-t)}  (q^{1 + a_i + \boldsymbol{\beta} + j}; q)_1 (q^{-a_i+\boldsymbol{\alpha} -1}; q)_1 (q^{1 - \mathbf{b} + a_i}; q)_{\mathbf{m} + j}}{(q; q)_{j + n_i} (q^{a_i - \mathbf{a}_{[i]}}; q)_{\mathbf{n}_{[i]} + j + 1}}
\\
\times{_{r+s + u + v}\phi_{r+s + u + v - 1}}\left(\left.\!\!\begin{array}{c} q^{- j - n_i}, q^{\mathbf{b} - a_i}, q^{\mathbf{a}_{[i]} - a_i - \mathbf{n}_{[i]} - j}, q^{- a_i - \boldsymbol{\beta} - j}, q^{-a_i + \boldsymbol{\alpha}} \\ q^{\mathbf{b} - a_i - \mathbf{m} - j}, q^{1 - a_i + \mathbf{a}_{[i]}}, q^{-1 - a_i - \boldsymbol{\beta} -  j}, q^{- a_i + \boldsymbol{\alpha} - 1} \end{array} \right| q^{N - M + r-1 + t-v} \!\right)    
\end{multline}
for $k=-n_{\max},\ldots, K$.
\end{theorem}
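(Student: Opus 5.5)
We will mimic the proof of Theorem~\ref{THM: qBeukersJouhet2}, with the changes needed for the confluent setting already seen in the proof of Theorem~\ref{THM:generalizedBeukersJouhet2}.

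\textbf{Step 1: Cauchy product.} First expand the left-hand side of \eqref{EQ:generalizedqBeukersJouhet2} by the Cauchy product as in \eqref{EQ:qtargetsum2}. Now there are only $s$ upper parameters $q^{\mathbf{b}-a_i}$, so the $j$-th term of the ${}_{s+u}\phi_{r+u-1}$ series carries the extra factor $[(-1)^jq^{\binom{j}{2}}]^{r-s}$. The argument carries the extra factor $q^{(s-r)a_i}$. The second series is the Bailey--Slater series, which has no such factor.

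\textbf{Step 2: identify $\gamma^{k+n_i}_{i,j}$.} Simplify the Pochhammer products with \eqref{EQ:qPochhammerformula} as in \eqref{EQ:simplifyqPochhamer}. The $r$ denominator products still produce $(-1)^jq^{(r-1)j+j(j-1)/2+\dots}$. The numerator now has only $s$ products, so it produces the corresponding factor with exponent $s$ in place of $r$. The missing $r-s$ factors $(-1)^jq^{\binom{j}{2}}q^{-a_ij}\cdots$ are exactly supplied by $[(-1)^jq^{\binom j2}]^{r-s}$ and by $W^jq^{(s-r)a_ij}$. Checking that the bookkeeping closes, including the definition of $W$ with $\prod_{i\le s}q^{-b_i}$, is where I expect the main difficulty. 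The outcome should be the same closed form \eqref{EQ:defgamma3}, with $\mathbf{b},\mathbf{m}$ now of length $s$. These are then the residues at $z=q^{a_i-j}$ of the function $F_k$ from \eqref{eq:fF-defined}, whose numerator now contains only $s$ products $(zq^{1-\mathbf{b}};q)_{k+\mathbf{m}}$.

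\textbf{Step 3: residues at infinity and at zero.} Use the residue theorem \eqref{eq:residues-coefficients}: $\mu_k=C_{-1}(k)-\res_{z=0}F_k$ for $k\ge-m_{\min}$.

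At infinity, the degree count changes. The numerator has degree $\sum_{i\le s}(k+m_i)+v$ and the denominator has degree $\sum_{i\le r}(k+n_i+1)$. Hence
$$F_k(z)=O\bigl(z^{M+v-N-r-t-(r-s)k}\bigr),$$
and $C_{-1}(k)=0$ as soon as $(r-s)k>M+v-N-r-t+1$, that is, for $k>p'$.

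At zero, the behaviour is unchanged. Corollary~\ref{COR:qBeukersJouhet2}(ii), whose proof holds verbatim with $s$ numerator factors, gives $\res_{z=0}F_k=\tilde Q_{u+t-1}(q^k)$, a polynomial of degree $u+t-1$ in $q^k$, vanishing if $u+t\le0$.

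\textbf{Step 4: assemble the rational function.} Summing over $k$, write
$$S(z)=\sum_{k=-n_{\max}}^{-m_{\min}-1}\mu_kz^k+\sum_{k\ge-m_{\min}}^{p'}C_{-1}(k)z^k-\sum_{k\ge-m_{\min}}\tilde Q_{u+t-1}(q^k)z^k.$$
The first two sums form a Laurent polynomial of top degree $\max(-m_{\min}-1,p')$. The last sum equals $z^{-m_{\min}}$ times a polynomial of degree less than $(u+t)_+$, divided by $(z;q)_{(u+t)_+}$, by the same geometric-series computation as before.

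Multiplying through by $(z;q)_{(u+t)_+}$ therefore gives a Laurent polynomial with exponents from $-n_{\max}$ to $K$. Its coefficients $\delta_k$ are obtained exactly as in the last part of the proof of Theorem~\ref{THM: qBeukersJouhet2}. Expand $(z;q)_{(u+t)_+}$ by the Gauss expansion, which here is the case $W$-part absent, i.e.\ $D_j=\left[\begin{smallmatrix}(u+t)_+\\ j\end{smallmatrix}\right]_qq^{j(j-1)/2}$. Then convolve with $\mu_j$. Finally, rewrite $\mu_j$ as the terminating ${}_{r+s+u+v}\phi_{r+s+u+v-1}$ via \eqref{EQ:qPochhammerformula2}, just as in \eqref{EQ:muexplicitformula}. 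This yields \eqref{eq:qdelta_k}.
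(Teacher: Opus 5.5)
Your route is the paper's route: Cauchy product, identification of $\gamma^{k+n_i}_{i,j}$ with the residues of $F_k$ at $q^{a_i-j}$, the degree count at infinity giving $C_{-1}(k)=0$ for $k>p'$, the unchanged residue at $0$, and convolution with the Gauss expansion of $(z;q)_{(u+t)_+}$. Everything up to the rational form of the right-hand side, with exponents from $-n_{\max}$ to $K$, is sound.

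In particular, the bookkeeping you were worried about in Step~2 does close. The $s$ numerator triples give $(-1)^{sj}q^{(\sum_{\ell\le s}b_\ell-sa_i)j+s\binom{j}{2}}$. The $r-1$ (not $r$) denominator triples give $(-1)^{(r-1)j}q^{(r-1+A-ra_i)j+(r-1)\binom{j}{2}}$ with $A=\sum_\ell a_\ell$. The factor $W^jq^{(s-r)a_ij}[(-1)^jq^{\binom{j}{2}}]^{r-s}$ then brings the total to exactly $(-1)^jq^{\binom{j}{2}+tj}$, i.e.\ \eqref{EQ:defgamma5}.

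The gap is the last sentence of Step~4. Rewriting $\mu_j$ via \eqref{EQ:qPochhammerformula2} ``just as in \eqref{EQ:muexplicitformula}'' does not produce the argument printed in \eqref{eq:qdelta_k}. Write the summands of $\mu_j$ as $\gamma^{j+n_i}_{i,l}$ (outer index $j$, inner index $l$) and track the powers of $q^{l}$:
\begin{itemize}
\item $(-1)^lq^{\binom{l}{2}}/(q;q)_{j+n_i-l}$ gives $q^{(j+n_i)l}$;
\item the $r-1$ factors $1/(q^{a_i-a_\ell-l};q)_{n_\ell+j+1}$ give $q^{(N-n_i+(r-1)(j+1))l}$;
\item the $v$ factors $(q^{1+a_i+\beta_\ell+j-l};q)_1$ give $q^{-vl}$;
\item the $\alpha$-factors give nothing, and $q^{tl}$ stays;
\item the numerator factors $(q^{1-b_\ell+a_i-l};q)_{m_\ell+j}$, of which there are now only $s$, give $q^{-(M+sj)l}$.
\end{itemize}
The total is $q^{l(N-M+r-1+t-v+(r-s)j)}$. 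When $s=r$ the $j$-terms cancel, which is why \eqref{EQ:muexplicitformula} has a $j$-independent argument. When $s<r$ they do not cancel, so the terminating series in \eqref{EQ:muexplicitformula-conl} and in \eqref{eq:qdelta_k} must have argument $q^{N-M+r-1+t-v+(r-s)j}$.

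As a check, take $r=2$, $s=u=v=0$, $\mathbf{n}=\mathbf{0}$, $k=1$. Matching $\gamma^{1}_{i,1}$ with the $l=1$ term of the ${}_2\phi_1$ forces the argument $q^{t+3}$, not $q^{t+1}$, and the two resulting values of $\mu_1$ differ in general.

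The paper's proof makes the same unchecked assertion (``similar to \eqref{EQ:muexplicitformula}, but with modified dimension of $\phi$''), so you have inherited its slip. The structural identity is proved, but \eqref{eq:qdelta_k} as printed is not what the computation yields for $s<r$. Your final step should carry out the rewriting explicitly and arrive at the corrected, $j$-dependent argument.
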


\textbf{Remark.}  If $u+t\le0$, then \eqref{eq:qdelta_k} simplifies to 
\begin{multline*}
\delta_{k}=
\sum_{i = 1}^r \frac{q^{a_i(1-t)}  (q^{1 + a_i + \boldsymbol{\beta} + k}; q)_1 (q^{-a_i+\boldsymbol{\alpha} -1}; q)_1 (q^{1 - \mathbf{b} + a_i}; q)_{\mathbf{m} + k}}{(q; q)_{k + n_i} (q^{a_i - \mathbf{a}_{[i]}}; q)_{\mathbf{n}_{[i]} + k + 1}}
\\
\times{_{r+s + u + v}\phi_{r+s + u + v - 1}}\left(\left.\!\!\begin{array}{c} q^{- k - n_i}, q^{\mathbf{b} - a_i}, q^{\mathbf{a}_{[i]} - a_i - \mathbf{n}_{[i]} - k}, q^{- a_i - \boldsymbol{\beta} - k}, q^{-a_i + \boldsymbol{\alpha}} \\ q^{\mathbf{b} - a_i - \mathbf{m} - k}, q^{1 - a_i + \mathbf{a}_{[i]}}, q^{-1 - a_i - \boldsymbol{\beta} -  k}, q^{- a_i + \boldsymbol{\alpha} - 1} \end{array} \right| q^{N - M + r-1 + t-v} \!\right).    
\end{multline*}

\noindent
\textit{Proof of Theorem~\ref{THM: qBeukersJouhet3}.} Repeating the proof of Theorem~\ref{THM: qBeukersJouhet2}, we calculate
\begin{align*} 
S(z) & := \sum_{i = 1}^r q^{a_i (1 - t)} \frac{(q^{1 - \mathbf{b} + a_i}; q)_{\mathbf{m} - n_i} (q^{- a_i - 1 + \boldsymbol{\alpha}}; q)_1 (q^{1 + a_i + \boldsymbol{\beta}-n_i}; q)_1 z^{-n_i}}{(q^{a_i - \mathbf{a}_{[i]}}; q)_{\mathbf{n}_{[i]} - n_i + 1}}  \\
& \ \quad \times{_{s + u}\phi_{r + u - 1}}\left(\left.\!\!\begin{array}{c} q^{\mathbf{b} - a_i}, q^{- a_i + \boldsymbol{\alpha}} \\ q^{1 + \mathbf{a}_{[i]} - a_i}, q^{- a_i - 1 + \boldsymbol{\alpha}} \end{array} \right| W q^{(s - r) a_i} z\!\right) \times {_{s + v}\hat{\phi}_{r + v - 1}}\left(\left.\!\!\begin{array}{c} q^{1 - \mathbf{b} + a_i + \mathbf{m} - n_i}, q^{2 + a_i + \boldsymbol{\beta}-n_i} \\ q^{1 - \mathbf{a}_{[i]} + a_i + \mathbf{n}_{[i]} - n_i}, q^{1 + a_i + \boldsymbol{\beta}-n_i} \end{array} \right| z\!\right)  \\
& = \sum_{i = 1}^r \sum_{k = 0}^\infty z^{k - n_i} \\
& \ \quad \times\sum_{j = 0}^k \frac{q^{a_i (1 - t)} (q^{1 - \mathbf{b} + a_i}; q)_{\mathbf{m} - n_i} (q^{\mathbf{b} - a_i}; q)_j (q^{1 - \mathbf{b} + a_i + \mathbf{m} - n_i}; q)_{k - j} (q^{- a_i + \boldsymbol{\alpha} + j - 1}; q)_1 (q^{1 + a_i + \boldsymbol{\beta}-n_i + k - j}; q)_1 }{(q^{a_i - \mathbf{a}_{[i]}}; q)_{\mathbf{n}_{[i]} - n_i + 1}(q^{1 + \mathbf{a}_{[i]} - a_i}; q)_j (q^{1 - \mathbf{a}_{[i]} + a_i + \mathbf{n}_{[i]} - n_i}; q)_{k - j} (q; q)_j (q; q)_{k - j}} \\
& \quad \ \times (W q^{(s - r) a_i})^j\left[(-1)^j q^{j (j - 1)/2} \right]^{r - s} \\
& = \sum_{i = 1}^r \sum_{k = 0}^\infty z^{k - n_i} \sum_{j = 0}^k  \gamma_{i, j}^k = \sum_{i = 1}^r \sum_{k_i = - n_i}^\infty z^{k_i} \sum_{j = 0}^{k_i + n_i} \gamma_{i, j}^{k_i + n_i}, 
\end{align*}
where 
\begin{align} \label{EQ:defgamma4}
\gamma_{i, j}^k  & =  \frac{q^{a_i (1 - t)} (q^{1 - \mathbf{b} + a_i}; q)_{\mathbf{m} - n_i} (q^{\mathbf{b} - a_i}; q)_j (q^{1 - \mathbf{b} + a_i + \mathbf{m} - n_i}; q)_{k - j} (q^{- a_i + \boldsymbol{\alpha} + j - 1}; q)_1 (q^{1 + a_i + \boldsymbol{\beta}-n_i + k - j}; q)_1}{(q^{a_i - \mathbf{a}_{[i]}}; q)_{\mathbf{n}_{[i]} - n_i + 1}(q^{1 + \mathbf{a}_{[i]} - a_i}; q)_j (q^{1 - \mathbf{a}_{[i]} + a_i + \mathbf{n}_{[i]} - n_i}; q)_{k - j} (q; q)_j (q; q)_{k - j}} \nonumber \\
& \quad \ \times  (W q^{(s - r) a_i})^j\left[(-1)^j q^{j (j - 1)/2} \right]^{r - s}. 
\end{align}
Using a similar computation as that in the proof of Theorem~\ref{THM: qBeukersJouhet2}, we deduce that 
\begin{align*}
\gamma_{i, j}^k & = \frac{(-1)^{(s - r  + 1) j} q^{(s - r + 1)j (j - 1)/2 + a_{i}(1 - t) + t j} (q^{1 - \mathbf{b} + a_i - j}; q)_{\mathbf{m} + k - n_i} (q^{- a_i + \boldsymbol{\alpha} + j - 1}; q)_1 (q^{1 + a_i + \boldsymbol{\beta}-n_i + k - j}; q)_1}{(q^{a_i - \mathbf{a}_{\mathbf{[i]}} - j}; q)_{\mathbf{n}_{[i]} + k - n_i + 1} (q; q)_j (q; q)_{k - j}} \\
& \quad \  \times \left[(-1)^j q^{j (j - 1)/2} \right]^{r - s} \\[6pt]
& = \frac{(-1)^j q^{j(j - 1)/2 + a_i(1 - t) + t j} (q^{1 - \mathbf{b} + a_i - j}; q)_{\mathbf{m} + k - n_i} (q^{- a_i + \boldsymbol{\alpha} + j - 1}; q)_1 (q^{1 + a_i + \boldsymbol{\beta}-n_i + k - j}; q)_1}{(q^{a_i - \mathbf{a}_{\mathbf{[i]}} - j}; q)_{\mathbf{n}_{[i]} + k - n_i + 1} (q; q)_j (q; q)_{k - j}},
\end{align*}
or
\begin{equation} \label{EQ:defgamma5}
\gamma_{i, j}^{k + n_i} = \frac{(-1)^j q^{j (j - 1)/2 + a_i (1 - t) + t j} (q^{1 - \mathbf{b} + a_i - j}; q)_{\mathbf{m} + k} (q^{- a_i + \boldsymbol{\alpha} + j - 1}; q)_1 (q^{1 + a_i + \boldsymbol{\beta} + k- j}; q)_1}{(q^{a_i - \mathbf{a}_{\mathbf{[i]}} - j}; q)_{\mathbf{n}_{[i]} + k + 1} (q; q)_j (q; q)_{k + n_i - j}}.
\end{equation}
Furthermore, we put $\gamma_{i, j}^{k + n_i} = 0$ if $k + n_i < 0$ as before. 
Using this convention, we again  have 
\[
S(z) = \sum_{k = - n_{\max}}^\infty z^k \sum_{i = 1}^r \sum_{j = 0}^{k + n_i} \gamma_{i,j}^{k + n_i}.
\]
Note that  $k \geq - m_{\min}$ for each $i = 1, \ldots, r$ if $- n_{\max} \geq - m_{\min}$.  Otherwise, if $- n_{\max} < - m_{\min}$, 
then we may write decomposition similar to \eqref{EQ:targetsum3} but with $ \gamma_{i, j}^{k + n_i}$ defined in \eqref{EQ:defgamma5}
\[
S(z) = \sum_{k = - n_{\max}}^{- m_{\min} - 1} \mu_k z^k + S_1(z),
\]
where 
\[
S_1(z) = \sum_{k = - m_{\min}}^\infty \mu_k z^k, \quad \quad \mu_k = \sum_{i = 1}^r \sum_{j = 0}^{k + n_i} \gamma_{i, j}^{k + n_i}. 
\]
Next, for each $k \in \Z$ define the functions by the same expression as \eqref{eq:fF-defined}:
\begin{equation} \label{EQ:residuefunction}
\tilde{f}_k(z) = - \frac{( z q^{1 - \mathbf{b}}; q)_{k + \mathbf{m}} (z q^{1 + \boldsymbol{\beta} + k}; q)_1 (z^{-1} q^{\boldsymbol{\alpha} - 1}; q)_1}{(z q^{- \mathbf{a}}; q)_{k + \mathbf{n}+1}} \quad \text{ and } \quad F_k(z) = z^{-t} \tilde{f}_k(z).
\end{equation}
The only difference with \eqref{eq:fF-defined}  is  unequal sizes of the vectors $\mathbf{a}$ and $\mathbf{b}$.   Computing the residues at non-zero poles 
\[
z = q^{a_i - j}, \quad \ i =1, \ldots, r, \quad \text{ and } \quad \ j = 0, \ldots, k + n_i. 
\]
present whenever $k + n_i + 1 > 0$,  we have in comparing with \eqref{EQ:defgamma5} that
\[
\res_{z = q^{a_i-j}}F_k(z) =  \gamma_{i,j}^{k+n_i}. 
\]

As $z\to\infty$, we obtain
\[
F_k(z) \sim \sum_{j = - \infty}^{M + v - N - r - t- (r - s)k} C_j(k) z^j,
\]
which implies that the coefficient $C_{-1}(k) = 0$ if $M + v - N - r - t- (r - s)k < -1$, {\it i.e.}, when $k>p'$, where $p'=\lfloor(M+v-N-r-t+1)/(r-s)\rfloor$. 

The residue at $0$ is unaffected by the sizes of the vectors $\mathbf{a}$ and $\mathbf{b}$ and from  item (ii) of Corollary~\ref{COR:qBeukersJouhet2}, we conclude as before that  the residue of $F_k(z)$ at $z = 0$ is $0$ if $u+t\le0$ and  $\tilde{Q}_{u+t-1}(q^k)$ if $u + t \geq 1$, where $\tilde{Q}_{u+t-1}(y) = \sum_{i = 0}^{u + t - 1} b_{u + t - 1, i} y^i$ is a polynomial of degree $u + t - 1$ in $y$. Using the previous convention  $\tilde{Q}_{-\ell}(y) \equiv 0$ for each $\ell = 1, 2, \ldots$, we obtain 
\[
\mu_k = \sum_{i = 1}^r \sum_{j = 0}^{k + n_i} \gamma_{i, j}^{k + n_i} = C_{-1}(k) - \tilde{Q}_{u + t - 1}(q^k).
\]
Thus, when $u + t \leq 0$, we have
\[
S_1(z) = \sum_{k = - m_{\min}}^{\infty}\mu_k z^k =\sum_{k = - m_{\min}}^{p'} C_{-1}(k) z^k.
\]
The above sum is empty and equals  $0$ if $p' < - m_{\min}$.  On the other hand, if $u + t \geq 1$ and $p' \geq - m_{\min}$, then 
\[
S_1(z) = \sum_{k = - m_{\min}}^\infty \mu_k z^k  = \sum_{k = - m_{\min}}^{p'} C_{-1}(k) z^k - z^{- m_{\min}}\sum_{j = 0}^{u + t - 1} \frac{b_{u + t - 1, j}}{1 - z q^j},
\] 
while if $u + t \geq 1$ and $p' < - m_{\min}$, then
\[
S_1(z) = \sum_{k = - m_{\min}}^\infty \mu_{k} z^k = - z^{- m_{\min}}\sum_{j = 0}^{u + t - 1} \frac{b_{u+t-1,j}}{1 - z q^j}. 
\]
Summarizing all those cases, we get 
\[
S(z) = \sum_{k = - n_{\max}}^{- m_{\min} - 1} \mu_k z^k + S_1(z) = \frac{1}{(z; q)_{(u + t)_{+}}} \sum_{j = - n_{\max}}^{\max(- m_{\min} - 1, p') + (u + t)_{+}} \delta_j z^j. 
\]

Next, we deduce an explicit formula for the coefficient $\delta_j$. As \eqref{EQ:defgamma5} only differs from \eqref{EQ:defgamma3} by the size of the vector $\mathbf{b}$,
we obtain a formula for $\mu_k$ similar to \eqref{EQ:muexplicitformula}, but with modified dimension of $\phi$:
\begin{align}\label{EQ:muexplicitformula-conl} 
\mu_k & = \sum_{i = 1}^r \sum_{j = 0}^{k + n_i} \gamma_{i, j}^{k + n_i}  \nonumber \\
& = \sum_{i = 1}^r \frac{q^{a_i(1-t)}  (q^{1 + a_i + \boldsymbol{\beta} + k}; q)_1 (q^{-a_i+\boldsymbol{\alpha} -1}; q)_1 (q^{1 - \mathbf{b} + a_i}; q)_{\mathbf{m} + k}}{(q; q)_{k + n_i} (q^{a_i - \mathbf{a}_{[i]}}; q)_{\mathbf{n}_{[i]} + k + 1}}  \nonumber \\
& \quad \ \times {_{r+s + u + v}\phi_{r+s + u + v - 1}}\left(\left.\!\!\begin{array}{c} q^{- k - n_i}, q^{\mathbf{b} - a_i}, q^{\mathbf{a}_{[i]} - a_i - \mathbf{n}_{[i]} - k}, q^{- a_i - \boldsymbol{\beta} - k}, q^{-a_i + \boldsymbol{\alpha}} \\ q^{\mathbf{b} - a_i - \mathbf{m} - k}, q^{1 - a_i + \mathbf{a}_{[i]}}, q^{-1 - a_i - \boldsymbol{\beta} -  k}, q^{- a_i + \boldsymbol{\alpha} - 1} \end{array} \right| q^{N - M + r-1 + t-v} \!\right),
\end{align}
where each term with $k + n_i < 0$ vanishes and 
$S(z) = \sum_{k=-n_{\max}}^\infty \mu_kz^k$. Using the Gauss expansion
\[
(z; q)_{(u+t)_{+}} = \sum_{j = 0}^{(u+t)_{+}} \left[ \begin{matrix} (u+t)_{+} \\ j \end{matrix} \right]_{q} q^{j (j - 1)/2} (-z)^j,
\]
we multiply  both sides of \eqref{EQ:generalizedqBeukersJouhet2} by $(z; q)_{(u+t)_{+}}$ and then get
$$
\sum_{j = 0}^{ (u + t)_{+}} \left[ \begin{matrix} (u+t)_{+} \\ j \end{matrix} \right]_{q} q^{j (j - 1)/2} (-z)^j \sum_{k = - n_{\max}}^\infty \mu_k z^k = \sum_{k = - n_{\max}}^{K}\delta_k z^k. 
$$
Multiplying both sides by $z^{n_{\max}}$, changing $k+n_{\max}\to k$ and writing $\hat{\mu}_k = \mu_{k - n_{\max}}, \hat{\delta}_k = \delta_{k - n_{\max}}$, we then get 
\[
\sum_{j = 0}^{ (u + t)_{+}} \left[ \begin{matrix} (u+t)_{+} \\ j \end{matrix} \right]_{q} q^{j (j - 1)/2} (-z)^j \sum_{k =0}^\infty \hat{\mu}_k z^k = \sum_{s = 0}^{\infty}z^{s}\sum_{j + k = s} \left[ \begin{matrix} (u+t)_{+} \\ j \end{matrix} \right]_{q} q^{j (j - 1)/2} (-1)^j \hat{\mu}_k \:=\!\! \sum_{s = 0}^{K+ n_{\max}} \hat{\delta}_s z^s
\]
so that 
$$
\hat{\delta}_s=\delta_{s-n_{\max}}=\sum_{j + k = s} \left[ \begin{matrix} (u+t)_{+} \\ j \end{matrix} \right]_{q} q^{j (j - 1)/2} (-1)^j \hat{\mu}_k=\sum_{j=0}^{\min(s,(u+t)_{+})} \left[ \begin{matrix} (u+t)_{+} \\ j \end{matrix} \right]_{q} q^{j (j - 1)/2} (-1)^j \hat{\mu}_{s-j}
$$
for $s=0,\ldots,K+n_{\max}$.  Set $k=s-n_{\max}$. It implies that 
\begin{multline*}
\delta_{k}=\sum_{j=0}^{\min(k+n_{\max},(u+t)_{+})} \left[ \begin{matrix} (u+t)_{+} \\ j \end{matrix} \right]_{q} q^{j (j - 1)/2} (-1)^j \mu_{k-j}
\\
=\!\!\!\!
\sum_{j=\max(-n_{\max},k-(u+t)_{+})}^{k} \left[ \begin{matrix} (u+t)_{+} \\ k-j \end{matrix} \right]_{q} q^{(k-j) (k-j -1)/2} (-1)^{k-j} \mu_{j}.   
\end{multline*}
Finally substituting \eqref{EQ:muexplicitformula-conl} for $\mu_k$, we arrive at \eqref{eq:qdelta_k}.  $\hfill\square$

\medskip

\noindent Below is an explicit  example of the right-hand side of \eqref{EQ:generalizedqBeukersJouhet2} 
with  $s = 1$ and $r = 3$. 
\begin{example} \label{EX: qconfluentex1}
Set $\mathbf{m} = (2), \mathbf{n} = (1,2,2),  \mathbf{a} = (1, 1/2, 1/3), \mathbf{b} = (2), \boldsymbol{\alpha} = (1/5)$, and $\boldsymbol{\beta} = (1/7)$. Then the right-hand side of~\eqref{EQ:generalizedqBeukersJouhet2} takes the form 
\[
\frac{\left(-q^{93/35}-1\right) }{q^{58/35} z}+\frac{\left(1-q^{-17/15}\right)
   \left(1-q^{-11/21}\right)}{(1-q^{-1/6}) z^2}+\frac{\left(1-q^{-13/10}\right) \left(1-q^{-5/14}\right)}{(1-q^{1/6}) z^2}.
\] 
\end{example}


\begin{thebibliography}{99}

\bibitem{Bailey64} W.N.\:Bailey, Generalized Hypergeometric Series, Cambridge Tracts in Mathematics and Mathematical Physics, 2nd edn. Stechert-Hafner Inc., New York (1964). \url{https://archive.org/details/bwb_P8-ABP-096}

\bibitem{Bailey} W.N.\:Bailey, On certain relations between hypergeometric series of higher order, J. London Math. Soc. Volume 8 (1933), 100--107.
\url{https://doi.org/10.1112/jlms/s1-8.2.100}

\bibitem{BJ} F.\:Beukers and F.\:Jouhet, Duality relations for hypergeometric series, Bull. London Math. Soc. 47(2015), 343--358.
\url{https://doi.org/10.1112/blms/bdv009}

\bibitem{Darling} H.B.C.\:Darling, On certain relations between hypergeometric series of higher orders, Proc. London Math. Soc. Volume~34(1932), 323--339.
\url{https://doi.org/10.1112/plms/s2-34.1.323}

\bibitem{Ebisu}A.\:Ebisu, Three-term relations for the hypergeometric series, Funkc. Ekvac., Volume 55(2012), 255--283.
\url{https://doi.org/10.1619/fesi.55.255}

\bibitem{CKP2021}A.\:Çetinkaya, D.B.\:Karp and E.G.\:Prilepkina, Hypergeometric functions at unit argument: simple derivation of old and new identities, SIGMA 17 (2021), 098.
\url{https://doi.org/10.3842/SIGMA.2021.098}

\bibitem{FKY2}R.\:Feng, A.\:Kuznetsov, F.\:Yang, A short proof of duality relations for hypergeometric functions, J. Math. Anal. Appl. Volume 443(2016), 116--122.
\url{https://doi.org/10.1016/j.jmaa.2016.05.020}


\bibitem{GR04} G.\:Gasper and M.\:Rahman, Basic Hypergeometric Series, Encyclopedia of Mathematics and Its Applications, Vol. 96, Cambridge University Press, 2004.
\url{https://doi.org/10.1017/CBO9780511526251}

\bibitem{Gorelov2010} V.A.\:Gorelov, On algebraic identities between generalized hypergeometric functions, Mathematical Notes, Volume 88:4(2010), 487--491.
\url{https://doi.org/10.4213/mzm8522}

\bibitem{KC02} V.\:Kac and P.\:Cheung, Quantum Calculus, Universitext. Springer, New York (2002).
\url{https://doi.org/10.1007/978-1-4613-0071-7}

\bibitem{KKK}S.I.\:Kalmykov, D.\:Karp and A.\:Kuznetsov, A new identity for the sum of products of generalized basic hypergeometric functions, Ramanujan J. 61(2023), 391--414.
\url{https://doi.org/10.1007/s11139-022-00598-w}

\bibitem{KarpAAM2026} D.\:Karp, Polynomial perturbations of Euler’s and Clausen’s identities,
Adv. Appl. Math., Volume 175, April 2026, 103042.
\url{https://doi.org/10.1016/j.aam.2026.103042}

\bibitem{KK}D.\:Karp and A.\:Kuznetsov, A new identity for a sum of products of the generalized hypergeometric functions, Proc. Amer. Math. Soc., Volume 149, Number 7, 2021, 2861--2870.
\url{https://doi.org/10.1090/proc/14803}

\bibitem{KPSIGMA2016}D.B.\:Karp and E.G.\:Prilepkina, Hypergeometric differential equation and new identities for the coefficients of N{\o}rlund and B\"{u}hring, SIGMA 12 (2016), 052, 23 pages.
\url{https://doi.org/10.3842/SIGMA.2016.052}

\bibitem{KPJMS2018}D.B.\:Karp and E.G.\:Prilepkina, An inverse factorial series for a general gamma ratio and related properties of the N{\o}rlund-Bernoulli polynomials, J. Math. Sci., Vol. 234, No. 5 (2018), 680--696.
\url{https://doi.org/10.1007/s10958-018-4036-1}

\bibitem{Nemes}G.\:Nemes, Generalization of Binet's Gamma function formulas, Integral Transforms Spec. Funct., Volume 24, Issue 8(2013), 597--606.
\url{https://doi.org/10.1080/10652469.2012.725168}

\bibitem{Nesterenko1995} Yu.V.\:Nesterenko, Hermite--Pad\'{e} approximants of generalized hypergeometric functions, Russian Acad. Sci. Sb. Math., Volume 83:1 (1995), 189--219.
\url{https://doi.org/10.1070/SM1995v083n01ABEH003587}

\bibitem{NIST}F.W.J.\:Olver, D.W.\:Lozier, R.F.\:Boisvert and C.W.\:Clark (Eds.) NIST Handbook of Mathematical Functions, Cambridge University Press, 2010.
\url{https://dlmf.nist.gov}

\bibitem{Slater} L.J.\:Slater, Generalized Hypergeometric Functions, Cambridge University Press, Cambridge (1966).
\url{https://archive.org/details/generalizedhyper0000unse}

\bibitem{Yamaguchi} Y.\:Yamaguchi, Three-term relations for basic hypergeometric series, J. Math. Anal. Appl. 464 (2018), no. 1,
662--678.
\url{https://doi.org/10.1016/j.jmaa.2018.04.021}

\end{thebibliography}
\end{document}